\begin{document}

\def\cO{\mathcal{O}}
\def\cS{\mathcal{S}}
\def\cX{\mathcal{X}}
\def\cY{\mathcal{Y}}
\def\ee{\varepsilon}
\def\sJ{J}
\def\sL{L}
\def\sR{R}

\newcommand{\removableFootnote}[1]{}

\newtheorem{theorem}{Theorem}[section]
\newtheorem{lemma}[theorem]{Lemma}
\newtheorem{proposition}[theorem]{Proposition}

\title{
Scaling laws for large numbers of coexisting attracting periodic solutions
in the border-collision normal form.
}
\author{
D.J.W.~Simpson\\
Institute of Fundamental Sciences\\
Massey University\\
Palmerston North\\
New Zealand}
\maketitle





\begin{abstract}
A wide variety of intricate dynamics may be created at border-collision bifurcations of piecewise-smooth maps,
where a fixed point collides with a surface at which the map is nonsmooth.
For the border-collision normal form in two dimensions, a codimension-three scenario was described in previous work
at which the map has a saddle-type periodic solution and an infinite sequence of stable
periodic solutions that limit to a homoclinic orbit of the saddle-type solution.
This paper introduces an alternate scenario of the same map
at which there is an infinite sequence of stable periodic solutions
due to the presence of a repeated unit eigenvalue in the linearization of some iterate of the map.
It is shown that this scenario is codimension-four
and that the sequence of periodic solutions is unbounded,
aligning with eigenvectors corresponding to the unit eigenvalue.

Arbitrarily many attracting periodic solutions coexist near either scenario.
It is shown that if $K$ denotes the number of attracting periodic solutions,
and $\ee$ denotes the distance in parameter space from one of the two scenarios,
then in the codimension-three case $\ee$ scales with $\lambda^{-K}$,
where $\lambda > 1$ denotes the unstable stability multiplier associated with the saddle-type periodic solution,
and in the codimension-four case $\ee$ scales with $K^{-2}$.
Since $K^{-2}$ decays significantly slower than $\lambda^{-K}$,
large numbers of attracting periodic solutions coexist
in open regions of parameter space extending
substantially further from the codimension-four scenarios than the codimension-three scenarios.
\end{abstract}

\section{Introduction}
\label{sec:intro}
\setcounter{equation}{0}

The coexistence of attractors is a critical feature of many nonlinear dynamical systems \cite{Fe08}.
For such systems the long-term dynamics can be altered by changing the initial conditions.
When boundaries of basins of attraction are highly intertwined,
long-term dynamics can be extremely sensitive to initial conditions
and physical experiments may be inherently unpredictable.
This form of unpredictability in a deterministic system
is different to that caused by the presence of a single chaotic attractor
for which the exact dynamics cannot be accurately forecast on a long time-scale,
but statistical properties of the long-term dynamics can be determined.
Multistable systems often exhibit relatively novel dynamics in the presence of noise.
A solution remains near an attractor until noise drives the solution elsewhere.
Consequently, with noise of an appropriately intermediate strength,
solutions may experience periods of relatively steady behavior
separated by rapid transitions between neighborhoods of the attractors.

Bistability is a familiar phenomenon in dynamical systems.
The coexistence of two attractors is common
near subcritical Hopf bifurcations and tipping points of climate models \cite{ThSi11}.
The coexistence of a large number of attractors is more exotic, yet important in many areas of applied science.
The observation that stable beating solutions and a variety stable bursting solutions can coexist in a model of a single neuron
has been used to argue that the neuron is able to exhibit sophisticated information processing \cite{CaBa93}. 
Multistability has been described in circulation models of oceans
for which different attractors correspond to different stable convection patterns \cite{Ra95b}.
In \cite{WaSu03} it was shown that the addition of a buffer step to a model of an autocatalator
creates infinitely many coexisting attractors.
Extreme multistability has also been described in various prototypical models
such as the Duffing oscillator \cite{ArBa85},
a single kicked rotor \cite{FeGr96},
and the H\'{e}non map \cite{GoBa02}.

In \cite{Ne74} it was shown that smooth maps exhibit infinitely many attractors
on a dense set of parameter values (known as a Newhouse region)
near where the map has a homoclinic tangency.
Typically the related bifurcation structure is extremely
complex involving nested bifurcation sequences \cite{Ro83,GoBa02}. 
Area-preserving maps may exhibit infinitely many elliptic periodic orbits \cite{GoSh00}.
If a small amount of dissipation is added,
the periodic orbits become attracting but finite in number.
In \cite{FeGr97} it was found that the number of coexisting attractors appears to
be inversely proportional to the magnitude of the dissipation.
For networks of weakly coupled oscillators for which clusters tend to synchronize,
many different clusters are often possible corresponding to the coexistence of many stable solutions \cite{Ka90b}.

This paper investigates large numbers of coexisting attracting periodic solutions in the two-dimensional border-collision normal form,
\begin{equation}
\left[ \begin{array}{c} x_{i+1} \\ y_{i+1} \end{array} \right] = 
\left\{ \begin{array}{lc}
\left[ \begin{array}{cc} \tau_{\sL} & 1 \\ -\delta_{\sL} & 0 \end{array} \right]
\left[ \begin{array}{c} x_i \\ y_i \end{array} \right] +
\left[ \begin{array}{c} 1 \\ 0 \end{array} \right] \mu \;, & x_i \le 0 \\
\left[ \begin{array}{cc} \tau_{\sR} & 1 \\ -\delta_{\sR} & 0 \end{array} \right]
\left[ \begin{array}{c} x_i \\ y_i \end{array} \right] +
\left[ \begin{array}{c} 1 \\ 0 \end{array} \right] \mu \;, & x_i \ge 0
\end{array} \right. \;.
\label{eq:f}
\end{equation}
The map (\ref{eq:f}) is piecewise-linear and continuous and describes dynamics 
local to border-collision bifurcations which arise in piecewise-smooth models of diverse physical systems \cite{DiBu08,ZhMo03,PuSu06}.
The map (\ref{eq:f}) has been the topic of many investigations \cite{NuYo92,BaGr99,ZhMo06b,SuGa08,Si10},
and multistability is emphasized in \cite{KaMa98,DuNu99,ZhMo08d}.

Border-collision for (\ref{eq:f}) occurs at $\mu = 0$.
The dynamics for $\mu \ne 0$ is independent to the magnitude of $\mu$, up to a spatial scaling,
and represents dynamics created by the border-collision bifurcation at $\mu = 0$.
For this reason, throughout this paper $\mu$ is treated as fixed at a nonzero value,
with which the parameter space of (\ref{eq:f}) is $\mathbb{R}^4$,
because $\tau_{\sL}$, $\delta_{\sL}$, $\tau_{\sR}$ and $\delta_{\sR}$ are each permitted to take any value in $\mathbb{R}$.



To study (\ref{eq:f}) it is convenient to associate orbits 
with symbol sequences on an alphabet $\{ \sL,\sR \}$.
A sequence is defined by setting the $i^{\rm th}$ symbol of the sequence to $\sL$ if $x_i < 0$,
and setting the $i^{\rm th}$ symbol to $\sR$ if $x_i > 0$.
(Either symbol may be chosen if $x_i = 0$.)
Any periodic solution to (\ref{eq:f}) has an associated symbol sequence, $\cS$, that is periodic,
and is referred to as an $\cS$-cycle.

In previous work \cite{Si14} it was shown that there exist codimension-three points of parameter space
at which (\ref{eq:f}) has a saddle-type periodic solution, say an $\cX$-cycle, with a coincident homoclinic connection,
and infinitely many stable $\cX^k \cY$-cycles, for some $\cY$.
As $k \to \infty$, the $\cX^k \cY$-cycles limit to a homoclinic orbit of the $\cX$-cycle.
The stability multipliers of the $\cX$-cycle, $\lambda_1$ and $\lambda_2$, must satisfy $\lambda_1 \lambda_2 = 1$.
Three particular examples were given for which the $\cX^k \cY$-cycles are not only stable, but attracting.
Since this phenomena is codimension-three and parameter space is four-dimensional,
there exist curves along which this phenomena occurs for a given combination of $\cX$ and $\cY$.

A summary of the content and organization of this paper is as follows.
Further conventions for (\ref{eq:f}) are given in \S\ref{sec:backg},
and the codimension-three scenario summarized above is further outlined in \S\ref{sub:codim3}.
Next it is assumed that $\tau_{\sL}$, $\delta_{\sL}$, $\tau_{\sR}$ and $\delta_{\sR}$
vary smoothly with a single parameter $\ee$, where $\ee = 0$ corresponds to a codimension-three point,
and the effect of increasing $\ee$ from zero is studied.
If the $\cX^k \cY$-cycles are attracting when $\ee = 0$,
then regardless of the direction in parameter space that we head from $\ee = 0$,
each $\cX^k \cY$-cycle is admissible and attracting over an interval of $\ee$-values.
In \S\ref{sub:asymptotic3}, upper and lower bounds are obtained for the upper end-point of this interval.
Both bounds are proportional to $\lambda_2(0)^k$,
where $\lambda_2(\ee)$ denotes the unstable stability multiplier of the $\cX$-cycle.
(The bifurcation values of single-round periodic solutions near homoclinic tangencies of smooth maps
satisfy the same limiting behavior \cite{GaSi72,GaSi73,GaWa87}.)
It follows that if $\ee_K$ denotes the supremum value of $\ee$ for which
the number of coexisting attracting $\cX^k \cY$-cycles is equal to $K$, then $\ee_K \sim \varphi(K) \lambda_2(0)^K$,
where $\varphi(K)$ is bounded between positive constants.
In \S\ref{sub:scaling3} this scaling law is illustrated for three examples.

In \S\ref{sub:codim4} novel codimension-four points are introduced
at which (\ref{eq:f}) has infinitely many stable $\cX^k \cY$-cycles.
At these points $M_{\cX}$ has a repeated unit eigenvalue,
where $M_{\cX}$ is in general a matrix whose eigenvalues are the stability multipliers of the $\cX$-cycle,
although at the codimension-four points the $\cX$-cycle does not exist.
It is tempting to infer that the codimension-four points are end-points of
curves of codimension-three points at which $\lambda_1 = \lambda_2 = 1$,
however this is not the case.
It is found that the codimension-four points must be distant from a codimension-three
scenario involving the same $\cX$ and $\cY$.
In contrast to the codimension-three points, the $\cX^k \cY$-cycles cannot be attracting,
and as $k \to \infty$ the $\cX^k \cY$-cycles grow in size without bound.
The structure of the $\cX^k \cY$-cycles is discussed in \S\ref{sub:structure4},
and in \S\ref{sub:examples4} examples are given for three different choices of $\cX$ and $\cY$.
These are derived by performing calculations based on the requirement that $M_{\cX}$ has a repeated unit eigenvalue.
The validity of these examples is formally verified by explicitly computing
each point of the $\cX^k \cY$-cycles for an arbitrary value of $k$.

Section \ref{sec:perturb4} concerns perturbations from the codimension-four points.
If the direction in parameter space that we head from $\ee = 0$ is chosen appropriately,
then for small $\ee > 0$ there are a large number of attracting $\cX^k \cY$-cycles.
In \S\ref{sub:stability4}, Theorem \ref{th:alphaBeta} tells us exactly which directions are appropriate
and is proved by looking at the stability of the $\cX^k \cY$-cycles.
Admissibility of $\cX^k \cY$-cycles is studied in \S\ref{sub:admissibility4}
from which it is found that we have the alternate scaling law, $\ee_K \sim \varphi(K) K^{-2}$.
In \S\ref{sub:scaling4} this scaling law is illustrated for the three examples of \S\ref{sub:examples4}.

Finally \S\ref{sec:conc} presents a summary and discussion.
Appendices \ref{app:eigenvector}-\ref{app:mDependent} contain detailed aspects of the proofs of the results.


\section{Symbolic dynamics and periodic solutions}
\label{sec:backg}
\setcounter{equation}{0}

We denote the left and right half-maps of (\ref{eq:f}) by
\begin{equation}
f^{\sL}(x,y) = A_{\sL}
\left[ \begin{array}{c} x \\ y \end{array} \right] +
\left[ \begin{array}{c} 1 \\ 0 \end{array} \right] \mu \;, \qquad
f^{\sR}(x,y) = A_{\sR}
\left[ \begin{array}{c} x \\ y \end{array} \right] +
\left[ \begin{array}{c} 1 \\ 0 \end{array} \right] \mu \;,
\label{eq:fLfR}
\end{equation}
where
\begin{equation}
A_{\sL} = \left[ \begin{array}{cc} \tau_{\sL} & 1 \\ -\delta_{\sL} & 0 \end{array} \right] \;, \qquad
A_{\sR} = \left[ \begin{array}{cc} \tau_{\sR} & 1 \\ -\delta_{\sR} & 0 \end{array} \right] \;.
\label{eq:ALAR}
\end{equation}
For any symbol sequence $\cS : \mathbb{Z} \to \{ \sL,\sR \}$ and initial point $(x_0,y_0)$, the orbit defined by
\begin{equation}
\left( x_{i+1},y_{i+1} \right) = f^{\cS_i} \left( x_i,y_i \right) \;,
\end{equation}
for $i \ge 0$, constitutes a forward orbit that ``follows $\cS$''.
If this orbit has the property that $x_i \le 0$ whenever 
$\cS_i = \sL$, and $x_i \ge 0$ whenever $\cS_i = \sR$,
then it is also an orbit of (\ref{eq:f}) and we say it is {\em admissible}.

If $\cS$ is periodic, then it is specified by $n$ consecutive symbols, say $\cS_0 \cdots \cS_{n-1}$,
where $n$ is the minimal period of $\cS$.
To avoid later confusion, here let us be somewhat punctilious
and note that the list $\cS_0 \cdots \cS_{n-1}$ is finite and is therefore a {\em word}.
Moreover, $\cS_0 \cdots \cS_{n-1}$ is a {\em primitive} word
(that is, cannot be written as a power) because $n$ is the minimal period.
Conversely, given a primitive word $\cS_0 \cdots \cS_{n-1}$,
the infinite repetition of this word generates a periodic symbol sequence with minimal period $n$.
Consequently, periodic symbol sequences of minimal period $n$ are isomorphic\removableFootnote{
Clearly this association defines a bijection.
Isomorphisms preserve structure, so in order to refer to it as an isomorphism
we define a function on the collection of all periodic symbol sequence of minimal period $n$,
that has effectively the same definition for primitive word of length $n$.
We let
\begin{equation}
F(\cS) = \sum_{m=0}^{m=n-1} 2^m \chi(\cS_m) \;,
\end{equation}
where $\chi(\sL) = 0$ and $\chi(\sR) = 1$.
For example, $F(\sR \sL \sR) = 1 \times 1 + 2 \times 0 + 4 \times 1 = 5$.
Then each sequence or word has a different value of $F$,
and the value of $F$ is preserved under the bijection. 
}
to primitive words of length $n$.
For this reason we may use periodic symbol sequences and primitive words interchangeably,
which is particularly convenient in regards to the form $\cS[k] = \cX^k \cY$.

Following \cite{SiMe09,SiMe10,Si10}, for any periodic symbol sequence $\cS$ of minimal period $n$, we let
\begin{equation}
f^{\cS} = f^{\cS_{n-1}} \circ \cdots \circ f^{\cS_0} \;,
\label{eq:fS}
\end{equation}
denote the $n^{\rm th}$ iterate of (\ref{eq:f}) following $\cS$.
The map $f^{\cS}$ is affine, and its matrix part is
\begin{equation}
M_{\cS} = A_{\cS_{n-1}} \cdots A_{\cS_0} \;.
\label{eq:MS}
\end{equation}
Throughout this paper we use the notation $\left( x^{\cS}_i,y^{\cS}_i \right)$, for $i = 0,\ldots,n-1$,
to denote the points of an $\cS$-cycle.
The point $\left( x^{\cS}_0,y^{\cS}_0 \right)$ is a fixed point of $f^{\cS}$.
Consequently, the $\cS$-cycle is unique if and only if $I - M_{\cS}$ is non-singular.
Equivalently, the $\cS$-cycle is unique if and only if $M_{\cS}$ does not have a unit eigenvalue.

If the $\cS$-cycle is admissible (that is, $x^{\cS}_i \le 0$ whenever $\cS_i = \sL$, and $x^{\cS}_i \ge 0$ whenever $\cS_i = \sR$)
and has no points on the switching manifold (that is, $x^{\cS}_i \ne 0$, for each $i$),
then the image of a small neighborhood of $\left( x^{\cS}_0,y^{\cS}_0 \right)$ under $n$ iterations of (\ref{eq:f})
is given by $f^{\cS}$.
In this case the stability of the $\cS$-cycle is determined by the eigenvalues of $M_{\cS}$.
The $\cS$-cycle is stable if both eigenvalues of $M_{\cS}$ have modulus less than or equal to $1$,
and attracting if both eigenvalues have modulus less than $1$.
It follows that the $\cS$-cycle is stable if and only if
\begin{align}
\det(M_{\cS}) - {\rm trace}(M_{\cS}) + 1 &\ge 0 \;, \label{eq:stabConditionSN} \\
\det(M_{\cS}) + {\rm trace}(M_{\cS}) + 1 &\ge 0 \;, \label{eq:stabConditionPD} \\
\det(M_{\cS}) - 1 &\le 0 \;, \label{eq:stabConditionNS}
\end{align}
and is attracting if and only if the inequalities are satisfied strictly.
Note that if the $\cS$-cycle is unique,
equality is not possible in (\ref{eq:stabConditionSN}) because this corresponds to a unit eigenvalue.

\section{Perturbations from codimension-three points}
\label{sec:codimperturb3}
\setcounter{equation}{0}

In this section we study perturbations
from the codimension-three points of (\ref{eq:f}) that were introduced in \cite{Si14}.
We fix $\mu \ne 0$ and suppose that the remaining parameters of (\ref{eq:f})
vary smoothly with a real-valued parameter $\ee$,
where $\ee = 0$ corresponds to a codimension-three point for some $\cX$ and $\cY$.
In \S\ref{sub:codim3} we review the codimension-three points.
In \S\ref{sub:asymptotic3} we determine upper and lower bounds on the supremum value
of $\ee$ for which $\cX^k \cY$-cycles are admissible and attracting.
Lastly in \S\ref{sub:scaling3} we transform these bounds
into a scaling law that we illustrate for three examples.

\subsection{Multistability due to a coincident homoclinic connection.}
\label{sub:codim3}

We suppose that $M_{\cX}(\ee)$ has eigenvalues $\lambda_1(\ee)$ and $\lambda_2(\ee)$,
with $\lambda_1(0) \ne \lambda_2(0)$ and $\lambda_1(0),\lambda_2(0) \ne 1$.
The latter assumption on the eigenvalues implies that (\ref{eq:f}) has a unique $\cX$-cycle for small values of $\ee$.
We let $\zeta_1(\ee)$ and $\zeta_2(\ee)$ denote corresponding eigenvectors that vary smoothly with $\ee$,
and let $Q(\ee) = \left[ \zeta_1(\ee) ,\; \zeta_2(\ee) \right]$.
We then consider the change of coordinates
\begin{equation}
\left[ \begin{array}{c} u \\ v \end{array} \right] =
Q^{-1}(\ee) \left( \left[ \begin{array}{c} x \\ y \end{array} \right] -
\left[ \begin{array}{c} x^{\cX}_0(\ee) \\ y^{\cX}_0(\ee) \end{array} \right] \right) \;,
\label{eq:uvZ}
\end{equation}
and, for any $\cS$, let $g^{\cS}$ denote $f^{\cS}$ in $(u,v)$-coordinates.
The purpose of this coordinate change is so that $g^{\cX}$ is given simply by
\begin{equation}
g^{\cX}(w) = \left[ \begin{array}{cc} \lambda_1(\ee) & 0 \\ 0 & \lambda_2(\ee) \end{array} \right] w \;,
\label{eq:gXZ}
\end{equation}
where $w = (u,v)$.
At this stage have no knowledge of the map $g^{\cY}$, other than that it is affine, so we write it as
\begin{equation}
g^{\cY}(w) = \left[ \begin{array}{cc}
\gamma_{11}(\ee) & \gamma_{12}(\ee) \\
\gamma_{21}(\ee) & \gamma_{22}(\ee)
\end{array} \right] w +
\left[ \begin{array}{c} \sigma_1(\ee) \\ \sigma_2(\ee) \end{array} \right] \;,
\label{eq:gYZ}
\end{equation}
for some $\gamma_{ij}$, $\sigma_1$ and $\sigma_2$.
The following theorem is taken from \cite{Si14}.

\begin{theorem}
Let $\cS[k] = \cX^k \cY$, where $\cX$ is primitive and $\cX_0 \ne \cY_0$.
Let $\tau_{\sL}, \delta_{\sL}, \tau_{\sR}, \delta_{\sR} \in \mathbb{R}$ and $\mu \ne 0$.
Suppose there exist infinitely many values of $k \ge 1$ for which (\ref{eq:f})
exhibits a unique, admissible, stable $\cS[k]$-cycle that has no points on the switching manifold.
Suppose that the eigenvalues of $M_{\cX}(0)$ are $0 \le \lambda_1(0) < 1 < \lambda_2(0)$,
and that $\zeta_2(0)$ (the eigenvector corresponding to $\lambda_2(0)$)
is not a scalar multiple of $[0,1]^{\sf T}$.
Then
\begin{enumerate}[label=\roman{*}),ref=\roman{*}]
\item
\label{it:lambda12Z}
$\lambda_1(0) \ne 0$ and $\lambda_2(0) = \frac{1}{\lambda_1(0)}$;
\item
\label{it:gamma22sigma2}
$\gamma_{22}(0) = 0$ and
$\sigma_2(0) = 0$;
\item
\label{it:gamma21sigma1}
$\gamma_{21}(0) \ne 0$ and
$\sigma_1(0) \ne 0$.
\end{enumerate}
\label{th:codim3}
\end{theorem}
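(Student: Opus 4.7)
The plan is to translate each stability condition (\ref{eq:stabConditionSN})--(\ref{eq:stabConditionNS}) and the admissibility of the $\cS[k] = \cX^k\cY$-cycle into an asymptotic identity at $\ee=0$, exploiting that these must hold for infinitely many values of $k$ with $\lambda_2(0)>1$. Evaluating at $\ee=0$ and working in the $(u,v)$-coordinates of (\ref{eq:uvZ}), the iterate $(g^{\cX})^k$ is simply ${\rm diag}(\lambda_1^k,\lambda_2^k)$ and $g^{\cS[k]} = g^{\cY}\circ(g^{\cX})^k$ has matrix part $M_{\cS[k]} = \Gamma\,{\rm diag}(\lambda_1^k,\lambda_2^k)$, with $\Gamma = (\gamma_{ij})$, and constant part $(\sigma_1,\sigma_2)^{\sf T}$. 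Thus ${\rm trace}(M_{\cS[k]}) = \gamma_{11}\lambda_1^k + \gamma_{22}\lambda_2^k$, $\det(M_{\cS[k]}) = \det(\Gamma)(\lambda_1\lambda_2)^k$, and Cramer's rule yields the base point $(u_0^{[k]},v_0^{[k]})$ with denominator $D_k = 1 - \gamma_{11}\lambda_1^k - \gamma_{22}\lambda_2^k + \det(\Gamma)(\lambda_1\lambda_2)^k$, nonzero for each relevant $k$ by uniqueness.

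I would extract the three identities in sequence. Combining (\ref{eq:stabConditionSN})--(\ref{eq:stabConditionPD}) with (\ref{eq:stabConditionNS}) yields $|{\rm trace}(M_{\cS[k]})| \le \det(M_{\cS[k]})+1 \le 2$; since $|\lambda_1(0)|<1$ keeps $\gamma_{11}\lambda_1^k$ bounded while $\lambda_2(0)>1$ makes $\gamma_{22}\lambda_2^k$ unbounded whenever $\gamma_{22}(0)\neq 0$, this forces $\gamma_{22}(0)=0$. I would then invoke admissibility: because the $\cX$-cycle has no points on the switching manifold, there is a positive $\rho$ such that perturbations of its iterates of magnitude less than $\rho$ preserve all $x_i^{\cX}$-signs; for the $\cS[k]$-cycle to follow $\cX^k\cY$ admissibly over $kn_{\cX}$ iterations of the $\cX$-blocks, both the base deviation $w_0^{[k]}$ and the terminal deviation $(g^{\cX})^k w_0^{[k]} = (\lambda_1^k u_0^{[k]},\lambda_2^k v_0^{[k]})$ must stay uniformly bounded in $k$. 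Substituting into the Cramer expression
\begin{equation*}
\lambda_2^k v_0^{[k]} \;=\; \frac{\sigma_2\lambda_2^k(1-\gamma_{11}\lambda_1^k) + \sigma_1\gamma_{21}(\lambda_1\lambda_2)^k}{D_k}
\end{equation*}
and using that $D_k$ tends to a finite nonzero limit $D_\infty$, boundedness of $\lambda_2^k v_0^{[k]}$ forces $\sigma_2(0)=0$, completing (ii), and also $\lambda_1(0)\lambda_2(0)\le 1$. If $\lambda_1(0)\lambda_2(0)<1$ strictly then $\lambda_2^k v_0^{[k]} \to 0$, so $(g^{\cX})^k w_0^{[k]} \to 0$ and the $\cY$-block begins asymptotically at $(x_0^{\cX}(0),y_0^{\cX}(0))$, on the same side of the switching manifold as $\cX_0$ and thus incompatible with $\cY_0 \neq \cX_0$; this rules out strict inequality and gives $\lambda_1(0)\lambda_2(0)=1$, hence $\lambda_1(0) = 1/\lambda_2(0) \in (0,1)$ and (i). Finally, with $\lambda_1(0)\lambda_2(0)=1$, $\lambda_2^k v_0^{[k]} \to \sigma_1(0)\gamma_{21}(0)/D_\infty$; converting back through $Q(0)$ and using $\zeta_{2,x}(0)\neq 0$ from the hypothesis, admissibility of the $\cY$-block's first iterate forces this limit to be nonzero, yielding $\sigma_1(0)\gamma_{21}(0)\neq 0$ and (iii).

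The main obstacle is making the admissibility step fully rigorous. Two subtleties arise: (a) a large but favorably aligned perturbation $w_0^{[k]}$ might in principle preserve $\cX$'s sign pattern even when the deviation from the $\cX$-cycle is not small; and (b) the degenerate subcase $D_\infty = 0$ (equivalently $\det(\Gamma(0))=1$ when $\lambda_1(0)\lambda_2(0)=1$), where the Cramer denominators require rescaling. Subtlety (a) is resolved by the uniform $\rho$-bound above together with the observation that asymptotically in $k$ any unbounded component of $(g^{\cX})^k w_0^{[k]}$ must align with $\zeta_2(0)$, whose images under the individual half-maps $A_{\cX_i}$ generically fail to respect $\cX$'s sign pattern. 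Subtlety (b) is handled by multiplying through by $D_k$, which is nonzero for each admissible $k$, and using the stability bound on $\det(M_{\cS[k]})$ to control the limiting structure.
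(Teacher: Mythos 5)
Your skeleton --- passing to the $(u,v)$-coordinates of (\ref{eq:uvZ}), reading off $\det(M_{\cS[k]})$ and ${\rm trace}(M_{\cS[k]})$, forcing $\gamma_{22}(0)=0$ from boundedness of the trace, and then interrogating the fixed point of $g^{\cS[k]}$ via Cramer's rule --- is the right machinery; it is what the paper itself does for the analogous codimension-four result (Theorem \ref{th:codim4} and Appendix \ref{app:eigenvector}; note the paper does not reprove Theorem \ref{th:codim3} but imports it from \cite{Si14}). The pivotal step, however, is not established: you claim admissibility forces $(g^{\cX})^k w_0^{[k]}$, in particular $\lambda_2^k v_0^{[k]}$, to stay bounded in $k$. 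The $\rho$-ball observation only shows that small deviations preserve the sign pattern, not that a preserved sign pattern forces small deviations, and your patch for subtlety (a) (``unbounded components aligned with $\zeta_2(0)$ generically fail to respect $\cX$'s sign pattern'') is not an argument --- nothing generic may be invoked here, and a sequence escaping along a direction with $\zeta_{2,x}(0)\ne 0$ can perfectly well keep a fixed $x$-sign (indeed the codimension-four cycles of \S\ref{sub:structure4} are admissible yet unbounded). The missing lever is the one the paper uses in Lemma \ref{le:eigenvector} and in parts (\ref{it:rho2})--(\ref{it:gamma22}) of Theorem \ref{th:codim4}: since $\cX_0 \ne \cY_0$, the points with indices $(k-1)n_{\cX}$ and $k n_{\cX}$ must lie on opposite sides of the switching manifold, whereas if the $q_{12}\lambda_2^j v_0^{[k]}$ contribution dominates at $j=k-1,k$ these two $x$-values differ essentially by the positive factor $\lambda_2$ and so have the same sign. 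It is this forced sign change at the $\cX$/$\cY$ junction, not boundedness per se, that drives $\sigma_2(0)=0$ and the rest, and making it rigorous requires comparing the competing $\lambda_1^j u_0^{[k]}$ and $\lambda_2^j v_0^{[k]}$ terms case by case.

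Beyond this, several individual inferences fail as stated. Bounded $\lambda_2^k v_0^{[k]}$ does not give $\lambda_1(0)\lambda_2(0)\le 1$: with $\sigma_2=0$, $\lambda_1\lambda_2>1$ and $\gamma_{12}\gamma_{21}\ne 0$ one has $D_k \sim -\gamma_{12}\gamma_{21}(\lambda_1\lambda_2)^k$ and $\lambda_2^k v_0^{[k]} \to -\sigma_1/\gamma_{12}$, which is bounded; excluding $\lambda_1\lambda_2>1$ needs the determinant bound $|\det\Gamma(0)|\,(\lambda_1\lambda_2)^k\le 1$ coming from (\ref{eq:stabConditionSN})--(\ref{eq:stabConditionNS}) (forcing $\gamma_{12}\gamma_{21}=0$), after which unboundedness or collapse onto the $\cX$-point is contradicted by the junction sign change. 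Your exclusion of $\lambda_1\lambda_2<1$ and your argument for (iii) both presume the $\cX$-cycle point lies strictly on the $\cX_0$-side of the switching manifold; admissibility of the $\cX$-cycle and $x^{\cX}_0\ne 0$ are not hypotheses, so the case $x^{\cX}_0=0$ must be handled by comparing the signs of the deviations $q_{11}\lambda_1^j u_0^{[k]} + q_{12}\lambda_2^j v_0^{[k]}$ themselves (this also is where $\lambda_1(0)\ne 0$ genuinely enters). Finally, the degenerate case $D_k\to 0$ (i.e.\ $\gamma_{12}\gamma_{21}=1$ when $\lambda_1\lambda_2=1$) is only waved at; ``multiplying through by $D_k$'' does not resolve it --- it too must be excluded by the sign-change mechanism. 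So the plan is sound and parallels the paper's method, but these are genuine gaps rather than routine details.
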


Theorem \ref{th:codim3} is slightly weaker than the analogous result stated in \cite{Si14}
in that part (\ref{it:gamma21sigma1}) of the above theorem 
is proved as a step towards demonstrating that $\cS[k]$-cycles
limit to an orbit that is homoclinic to the $\cX$-cycle as $k \to \infty$.
The conditions $\gamma_{21}(0) \ne 0$ and $\sigma_1(0) \ne 0$ are crucial to the analysis below,
whereas the existence of a homoclinic connection is not needed
and indeed the existence and nature of homoclinic orbits for $\ee > 0$ is beyond the scope of this paper.

\subsection{Stability and admissibility of periodic solutions for perturbed parameter values}
\label{sub:asymptotic3}

To investigate $\cS[k]$-cycles, we compose $k$ instances of $g^{\cX}$
(\ref{eq:gXZ}) with $g^{\cY}$ (\ref{eq:gYZ}) to obtain
\begin{equation}
g^{\cS[k]}(w) = \left[ \begin{array}{cc}
\gamma_{11}(\ee) \lambda_1^k(\ee) & \gamma_{12}(\ee) \lambda_2^k(\ee) \\
\gamma_{21}(\ee) \lambda_1^k(\ee) & \gamma_{22}(\ee) \lambda_2^k(\ee)
\end{array} \right] w +
\left[ \begin{array}{c}
\sigma_1(\ee) \lambda_1^k(\ee) \\
\sigma_2(\ee) \lambda_2^k(\ee)
\end{array} \right] \;.
\label{eq:gSkZ}
\end{equation}
The matrix part of (\ref{eq:gSkZ}) has the same spectrum as $M_{\cS[k]}$
(the matrix part of $f^{\cS[k]}$), therefore
\begin{align}
\det \left( M_{\cS[k]}(\ee) \right) &=
\left( \gamma_{11}(\ee) \gamma_{22}(\ee) - \gamma_{12}(\ee) \gamma_{21}(\ee) \right)
\lambda_1^k(\ee) \lambda_2^k(\ee) \;, \label{eq:detMSkZ} \\
{\rm trace} \left( M_{\cS[k]}(\ee) \right) &=
\gamma_{11}(\ee) \lambda_1^k(\ee) + \gamma_{22}(\ee) \lambda_2^k(\ee) \;. \label{eq:traceMSkZ}
\end{align}
Given any suitably large value of $k$,
we first establish an upper bound on the largest value of $\ee$ 
for which the eigenvalues of $M_{\cS[k]}(\ee)$ have modulus less than or equal to $1$.
For values of $\ee$ greater than this bound,
if the $\cS[k]$-cycle is admissible with no points on the switching manifold then it cannot be stable.

\begin{lemma}
Suppose $\mu \ne 0$ and that the remaining parameters of (\ref{eq:f}) vary smoothly with $\ee$.
Suppose that when $\ee = 0$ (\ref{eq:f}) satisfies the assumptions of Theorem \ref{th:codim3}.
Suppose $\gamma_{22}'(0) \ne 0$.
Then there exists $\ee^* > 0$ and $k_{\rm min} \in \mathbb{Z}$,
such that for all $k \ge k_{\rm min}$
and $\frac{3}{\left| \gamma_{22}'(0) \right|} \lambda_2^{-k}(0) \le \ee \le \ee^*$,
if the $\cS[k]$-cycle is admissible with no points on the switching manifold,
then it is unstable.
\label{le:upperBoundZ}
\end{lemma}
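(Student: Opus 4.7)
The plan is to prove the stronger statement that $|\mathrm{trace}(M_{\cS[k]}(\ee))| > 2$ throughout the stated interval for all $k \geq k_{\min}$. Since the eigenvalues $\lambda_\pm$ of any real $2\times 2$ matrix satisfy $|\lambda_+| + |\lambda_-| \geq |\lambda_+ + \lambda_-| = |\mathrm{trace}|$, this forces at least one eigenvalue to have modulus exceeding $1$, which is incompatible with stability. By (\ref{eq:traceMSkZ}), $\mathrm{trace}(M_{\cS[k]}(\ee)) = \gamma_{11}(\ee)\lambda_1(\ee)^k + h(\ee,k)$, where $h(\ee,k) := \gamma_{22}(\ee)\lambda_2(\ee)^k$. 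By Theorem~\ref{th:codim3}, $\lambda_1(0) \in [0,1)$ and $\gamma_{22}(0) = 0$; for sufficiently small $\ee^*$ we have $|\lambda_1(\ee)| \leq \bar\lambda_1 < 1$ and $\lambda_2(\ee) > 1$ on $[0,\ee^*]$, hence $|\gamma_{11}(\ee)\lambda_1(\ee)^k| \leq C\bar\lambda_1^k \to 0$ uniformly in $\ee$. The task therefore reduces to showing $|h(\ee,k)| \geq 5/2$ uniformly on $[\alpha\lambda_2(0)^{-k},\ee^*]$ for large $k$, where $\alpha = 3/|\gamma_{22}'(0)|$.

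Assume without loss of generality that $\gamma_{22}'(0) > 0$, so (after shrinking $\ee^*$ if necessary) $h > 0$ on $(0,\ee^*]$. At the left endpoint, Taylor expansion combined with $k\lambda_2(0)^{-k}\to 0$ yields $h(\alpha\lambda_2(0)^{-k},k) \to \gamma_{22}'(0)\alpha = 3$ as $k \to \infty$. For any fixed $\delta \in (0,\ee^*]$ and any $\ee \in [\delta,\ee^*]$, one has $h(\ee,k) \geq (\inf_{[\delta,\ee^*]}\gamma_{22}) \cdot \lambda_2(\delta)^k \to +\infty$ uniformly in $\ee$. What remains is to control $h$ on the shrinking transition region $[\alpha\lambda_2(0)^{-k},\delta]$. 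I would do this by computing $\partial_\ee h = h \cdot (\gamma_{22}'/\gamma_{22} + k\lambda_2'/\lambda_2)$ and showing the bracketed logarithmic derivative has at most one zero on $(0,\ee^*]$, which must be a local maximum of $h$ (since $\partial_\ee h$ changes from positive to negative there). Consequently $h(\cdot,k)$ admits no interior local minimum on $[\alpha\lambda_2(0)^{-k},\delta]$, so its minimum is attained at one of the endpoints, and each endpoint gives $h \geq 5/2$ for $k$ sufficiently large.

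The main obstacle is the monotonicity analysis of this logarithmic derivative: $\gamma_{22}'/\gamma_{22}$ blows up like $1/\ee$ at $\ee = 0$ and is strictly decreasing for $\ee$ small (its $\ee$-derivative is asymptotically $-(\gamma_{22}'(0))^2/\gamma_{22}(\ee)^2 \sim -1/\ee^2$), while $k\lambda_2'/\lambda_2$ is of order $k$ but varies slowly in $\ee$ (its $\ee$-derivative is only $O(k)$, which is dominated by the $1/\ee^2$ term for $\ee \ll 1/\sqrt{k}$). Choosing $\ee^*$ sufficiently small, in terms of the Taylor data of $\lambda_2$ at $0$, guarantees that the sum has at most one zero on $(0,\ee^*]$ (located at $\ee \sim 1/k$ and present only when $\lambda_2'(0) < 0$; otherwise $h$ is monotonically increasing and the argument is immediate), and that this zero is a local maximum of $h$. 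The delicate competition between the $1/\ee$-singularity and the $O(k)$ term is what dictates the requisite smallness of $\ee^*$.
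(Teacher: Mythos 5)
Your core strategy coincides with the paper's: establish $\left|{\rm trace}\left(M_{\cS[k]}(\ee)\right)\right| > 2$ on the whole interval, which is incompatible with the stability conditions (\ref{eq:stabConditionSN})--(\ref{eq:stabConditionNS}); the paper's proof is precisely the two-line asymptotic version of this, ${\rm trace}\left(M_{\cS[k]}(\ee)\right) \sim \gamma_{22}'(0) \lambda_2^k(0) \ee$ together with $\lambda_2^k(0) \ee \ge \frac{3}{|\gamma_{22}'(0)|}$. Your reduction to a uniform lower bound on $|h(\ee,k)|$ with $h = \gamma_{22}(\ee)\lambda_2^k(\ee)$, and your endpoint computations, are correct, and you have correctly identified the uniformity issue the paper glosses over (that $\lambda_2(\ee)^k$ need not track $\lambda_2(0)^k$ once $k\ee$ is not small).

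The step you lean on for the transition region, however, is not justified as written. The domination of the $O(k)$ derivative of $k\lambda_2'/\lambda_2$ by the $-1/\ee^2$ derivative of $\gamma_{22}'/\gamma_{22}$ holds only for $\ee \ll k^{-1/2}$, a threshold that shrinks with $k$; since $\ee^*$ (or $\delta$) is a $k$-independent constant, choosing it small cannot extend this monotonicity over $[k^{-1/2},\delta]$, so ``at most one zero on $(0,\ee^*]$'' does not follow from your argument. The claim is nonetheless true in the case you need ($\lambda_2'(0)<0$): for $\ee \in [C k^{-1/2},\delta]$ one has $\gamma_{22}'/\gamma_{22} = O(\ee^{-1}) = O(k^{1/2})$ while $k\lambda_2'/\lambda_2 \le -mk$ for some $m>0$, so the logarithmic derivative is negative outright there and any zero lies in the regime your domination argument does cover. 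Better still, the entire critical-point analysis can be dropped: split at $\ee = k^{-2}$. For $\frac{3}{|\gamma_{22}'(0)|}\lambda_2^{-k}(0) \le \ee \le k^{-2}$ we have $k\ee \to 0$, so $\gamma_{22}(\ee)\lambda_2^k(\ee) = \gamma_{22}'(0)\ee\lambda_2^k(0)\left(1+o(1)\right)$ uniformly and $|h| \ge 3 - o(1)$; for $k^{-2} \le \ee \le \ee^*$ (with $\ee^*$ small enough that $|\gamma_{22}(\ee)| \ge \tfrac{1}{2}|\gamma_{22}'(0)|\ee$ and $\lambda_* := \min_{[0,\ee^*]}\lambda_2 > 1$), one gets $|h| \ge \tfrac{1}{2}|\gamma_{22}'(0)| k^{-2}\lambda_*^{\,k} \to \infty$. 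This two-regime bound delivers the uniform estimate $|h|\ge 5/2$ with no monotonicity discussion, and is in effect the rigorous version of the paper's terse proof.
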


We expect that the point in parameter space corresponding to $\ee = 0$
lies on a curve of codimension-three points associated with Theorem \ref{th:codim3}.
The condition $\gamma_{22}'(0) \ne 0$ ensures that as we increase the value of $\ee$
from zero, we move away from this curve in a transverse direction.

\begin{proof}
We have $\lambda_1(0) < 1$, $\lambda_2(0) > 1$ and $\gamma_{22}(0) = 0$,
thus by (\ref{eq:traceMSkZ}),
\begin{equation}
{\rm trace} \left( M_{\cS[k]}(\ee) \right) \sim \gamma_{22}'(0) \lambda_2^k(0) \ee \;.
\end{equation}
Also $\lambda_2^k(0) \ge \frac{3}{\left| \gamma_{22}'(0) \right| \ee}$,
hence for sufficiently small $\ee > 0$,
$\left| {\rm trace} \left( M_{\cS[k]}(\ee) \right) \right|$ is approximately greater than $3$.
In particular $\left| {\rm trace} \left( M_{\cS[k]}(\ee) \right) \right| > 2$,
thus the stability conditions
(\ref{eq:stabConditionSN})-(\ref{eq:stabConditionNS}) cannot all be satisfied.
Therefore if the $\cS[k]$-cycle is admissible with no points on the switching manifold,
it cannot be stable.
\end{proof}

The next lemma gives an analogous lower bound on the largest value of $\ee$
for which the $\cS[k]$-cycle is admissible and attracting.
This result is considerably more difficult to obtain
because it is necessary to demonstrate that every point of an $\cS[k]$-cycle
lies on the correct side of the switching manifold.

If the $\cS[k]$-cycles are attracting when $\ee = 0$,
then there is a large number of admissible, attracting $\cS[k]$-cycles
for a perturbation in any direction from $\ee = 0$.
Alternatively if $\cS[k]$-cycles are stable but not attracting when $\ee = 0$, only certain directions will work.
In this case we have equality in one of (\ref{eq:stabConditionSN})-(\ref{eq:stabConditionNS})
for the $\cS[k]$-cycles when $\ee = 0$.
Equality in (\ref{eq:stabConditionSN}) is not possible
in view of the uniqueness of $\cS[k]$-cycles.
Equality in (\ref{eq:stabConditionPD}) is also not possible because, by (\ref{eq:traceMSkZ}) and
part (\ref{it:gamma22sigma2}) of Theorem \ref{th:codim3},
${\rm trace} \left( M_{\cS[k]}(0) \right) = \gamma_{11}(0) \lambda_1^k(0) \to 0$ as $k \to \infty$.
However, equality in (\ref{eq:stabConditionNS}) is possible, and occurs if $\gamma_{12}(0) \gamma_{21}(0) = -1$.
In this case $\cS[k]$-cycles are attracting for $\ee > 0$
only if $\det \left( M_{\cX}(\ee) \right) < 1$.
The inequality $\lambda_1'(0) \lambda_2(0) + \lambda_1(0) \lambda_2'(0) < 0$
is equivalent to $\frac{d}{d \ee} \det \left( M_{\cX}(\ee) \right) \big|_{\ee = 0} < 0$,
and therefore $\cS[k]$-cycles are attracting for small $\ee > 0$ if this inequality holds.

\begin{lemma}
Suppose $\mu \ne 0$ and that the remaining parameters of (\ref{eq:f}) vary smoothly with $\ee$.
Suppose that when $\ee = 0$ (\ref{eq:f}) satisfies the assumptions of Theorem \ref{th:codim3} and
\begin{equation}
\min_i \left| x^{\cS[k]}_i(0) \right| \not\to 0 {\rm ~as~} k \to \infty \;.
\label{eq:distanceToSwManZ}
\end{equation}
If $\gamma_{12}(0) \gamma_{21}(0) = -1$,
suppose $\lambda_1'(0) \lambda_2(0) + \lambda_1(0) \lambda_2'(0) < 0$.
Then there exists $k_{\rm min} \in \mathbb{Z}$ and $\Delta > 0$,
such that for all $k \ge k_{\rm min}$ and
$0 < \ee \le \Delta \lambda_2^{-k}(0)$,
the $\cS[k]$-cycle is admissible and attracting.
\label{le:lowerBoundZ}
\end{lemma}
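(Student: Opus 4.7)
The plan is to analyze the $\cS[k]$-cycle directly in the $(u,v)$-coordinates of \S\ref{sub:codim3}. Two things must be verified throughout the regime $0 < \ee \le \Delta\lambda_2^{-k}(0)$: that the eigenvalues of $M_{\cS[k]}(\ee)$ have modulus strictly less than one (attraction), and that every point of the orbit lies on the correct side of the switching manifold (admissibility). The defining feature of this regime is that $\gamma_{22}(\ee)\lambda_2^k(\ee)$ and $\sigma_2(\ee)\lambda_2^k(\ee)$ are each $\mathcal{O}(\Delta)$ --- bounded but not small --- while all remaining parameter-dependent quantities appearing in (\ref{eq:gSkZ}) are $\mathcal{O}(\lambda_1^k(0))$-perturbations of their $\ee=0$ values.

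For attraction, identities (\ref{eq:detMSkZ})--(\ref{eq:traceMSkZ}) give ${\rm trace}(M_{\cS[k]}(\ee)) = \mathcal{O}(\Delta)$ and $\det(M_{\cS[k]}(\ee)) \to -\gamma_{12}(0)\gamma_{21}(0)$ as $k \to \infty$, uniformly on the regime. Uniqueness of the $\cS[k]$-cycle at $\ee=0$ forces $\gamma_{12}(0)\gamma_{21}(0) < 1$, while stability at $\ee=0$ forces $\gamma_{12}(0)\gamma_{21}(0) \ge -1$. If $\gamma_{12}(0)\gamma_{21}(0) \ne -1$, then all three inequalities (\ref{eq:stabConditionSN})--(\ref{eq:stabConditionNS}) hold strictly for small $\Delta$ and large $k$ by continuity. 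In the boundary case $\gamma_{12}(0)\gamma_{21}(0) = -1$ the only non-trivial condition is $\det < 1$; writing $\det(M_{\cS[k]}(\ee)) = h(\ee)p(\ee)^k$ with $h(\ee) = \gamma_{11}(\ee)\gamma_{22}(\ee) - \gamma_{12}(\ee)\gamma_{21}(\ee)$ and $p(\ee) = \lambda_1(\ee)\lambda_2(\ee)$, an asymptotic expansion yields $\log\det(M_{\cS[k]}(\ee)) = \ee[h'(0) + kp'(0)] + o(\ee)$, which is strictly negative for $k$ sufficiently large by the hypothesis $p'(0) = \lambda_1'(0)\lambda_2(0) + \lambda_1(0)\lambda_2'(0) < 0$.

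For admissibility, I would solve the fixed-point equation for $g^{\cS[k]}(\ee)$ by explicit $2\times 2$ inversion and expand in powers of $\Delta$ and $\lambda_2^{-k}(0)$. Theorem \ref{th:codim3}'s conclusions $\gamma_{22}(0) = \sigma_2(0) = 0$ make the leading corrections to the starting point $w_0(\ee) = (u_0(\ee), v_0(\ee))$ of the cycle satisfy $u_0(\ee) - u_0(0) = \mathcal{O}(\Delta)$ and, crucially, $v_0(\ee) - v_0(0) = \mathcal{O}(\Delta\lambda_2^{-k}(0))$. Propagating via the diagonal action $w_i(\ee) = {\rm diag}(\lambda_1(\ee),\lambda_2(\ee))^i w_0(\ee)$, the $u$-perturbation is preserved by the contraction factor $\lambda_1^i \le 1$, while the $v$-perturbation is amplified by $\lambda_2^i$ to at most $\mathcal{O}(\Delta)$ for every $i \in \{0,\ldots,k\}$. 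The intermediate orbit points generated by the component half-maps of $\cX$ and $\cY$ inherit the $\mathcal{O}(\Delta)$ bound by continuity of affine maps with bounded coefficients. Assumption (\ref{eq:distanceToSwManZ}) furnishes a constant $c > 0$ bounding below the distance from the unperturbed cycle points to the switching manifold, so choosing $\Delta$ small enough that every $\mathcal{O}(\Delta)$ bound is less than $c$ yields admissibility of the perturbed cycle.

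The principal obstacle is admissibility in the presence of the expanding iterations $(g^{\cX})^k$: a generic perturbation at $w_0$ would grow along the unstable direction by a factor of $\lambda_2^k(0)$, overwhelming any switching-manifold tolerance. The essential balance comes from $\sigma_2(0) = 0$ in Theorem \ref{th:codim3}, which forces the $v$-perturbation at $w_0$ to enter linearly through $\sigma_2(\ee) = \mathcal{O}(\ee)$ and hence be $\mathcal{O}(\Delta\lambda_2^{-k}(0))$, exactly cancelling the subsequent $\lambda_2^k$-expansion. This structural cancellation, together with the contraction in the $u$-direction, is what makes the perturbation estimates uniform in $k$ and the admissibility conclusion possible.
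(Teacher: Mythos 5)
Your proof is essentially correct, and while the stability half follows the paper's own route (trace $= \cO(\Delta)+\cO(\lambda_1^k(0))$, determinant expansion handling the marginal case $\gamma_{12}(0)\gamma_{21}(0)=-1$ via $\lambda_1'(0)\lambda_2(0)+\lambda_1(0)\lambda_2'(0)<0$), the admissibility half takes a genuinely different route. The paper estimates $w^{\cS[k]}_i(\ee)=w^{\cS[k]}_i(0)+\cO(\Delta)$ only for a $k$-independent set of points (the first $\cX$-block and the final $\cX$- and $\cY$-blocks), and then disposes of the $k-2$ interior blocks by an exact geometric containment: for each $m$ the points $w^{\cS[k]}_{j n_{\cX}+m}(\ee)$ lie inside the triangle spanned by $w^{\cX}_m(\ee)$ and the two extreme block points, so the switching-manifold margin is needed only at $\cO(1)$ many points. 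You instead prove the perturbation bound $\bigl|w^{\cS[k]}_{j n_{\cX}}(\ee)-w^{\cS[k]}_{j n_{\cX}}(0)\bigr|=\cO(\Delta)$ uniformly for \emph{all} $j=0,\ldots,k$, which works precisely because of the structural cancellation you identify: $\gamma_{22}(0)=\sigma_2(0)=0$ together with $\gamma_{21}(\ee)\lambda_1^k(\ee)\sigma_1(\ee)=\cO(\lambda_2^{-k}(0))$ force $v^{\cS[k]}_0(\ee)-v^{\cS[k]}_0(0)=\cO(\Delta\lambda_2^{-k}(0))$, so the subsequent $\lambda_2^k$-expansion only restores an $\cO(\Delta)$ error, and you avoid the trap the paper warns about (iterating the half-maps a $k$-dependent number of times) by using the exact diagonal form of $g^{\cX}$ for the block points and only finitely many half-map iterations within each block. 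The price is that you must invoke (\ref{eq:distanceToSwManZ}) as a uniform margin at \emph{every} point of the unperturbed cycle, whereas the paper needs it only at the boundary points; the gain is that no convexity argument (and no separate admissibility statement for the $\cX$-cycle points) is required. Two details you should spell out to make the estimate airtight: the cross terms $\bigl(\lambda_i^j(\ee)-\lambda_i^j(0)\bigr)$ multiplying the unperturbed components are $o(\Delta)$ because $j\ee\le k\Delta\lambda_2^{-k}(0)\to 0$ and $v^{\cS[k]}_0(0)=\cO(\lambda_2^{-k}(0))$; and your assertion that uniqueness forces $\gamma_{12}(0)\gamma_{21}(0)<1$ is really the requirement that $\det\bigl(I-M_{\cS[k]}(0)\bigr)$ stays bounded away from zero uniformly in $k$ — a point the paper glosses in exactly the same way, so it is a shared (and minor) gap rather than a defect of your approach.
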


As $k \to \infty$, $\cS[k]$-cycles approach an orbit that is homoclinic to the $\cX$-cycle.
Equation (\ref{eq:distanceToSwManZ}) is equivalent to the assumption that this homoclinic orbit
has no points on the switching manifold.
If (\ref{eq:distanceToSwManZ}) does not hold then $\cS[k]$-cycles are closer to the switching manifold 
than they are in generic scenarios, and consequently it may be possible
for perturbations smaller than $\ee \propto \lambda_2^{-k}(0)$
to cause the $\cS[k]$-cycles to become virtual.

\begin{proof}
Given $k_{\rm min} \in \mathbb{Z}$ and $\Delta > 0$,
if $k \ge k_{\rm min}$ and $0 < \ee \le \Delta \lambda_2^{-k}(0)$,
then by (\ref{eq:traceMSkZ}) we can write
${\rm trace} \left( M_{\cS[k]}(\ee) \right) =
\cO \left( \lambda_1(0)^{k_{\rm min}} \right) + \cO(\Delta)$.
Also by (\ref{eq:detMSkZ}) we have,
\begin{equation}
\det \left( M_{\cS[k]}(\ee) \right) =
-\gamma_{12}(0) \gamma_{21}(0) \big\{ 1 +
\left( \lambda_1'(0) \lambda_2(0) + \lambda_1(0) \lambda_2'(0) \right) k \ee \big\}
+ \cO(\ee) + \cO \left( k^2 \ee^2 \right) \;.
\nonumber
\end{equation}
From these equations we can see that there exists $k_{\rm min} \in \mathbb{Z}$
and $\Delta > 0$ such that
the stability conditions (\ref{eq:stabConditionSN})-(\ref{eq:stabConditionNS})
hold strictly for all $k \ge k_{\rm min}$ and
$0 < \ee \le \Delta \lambda_2^{-k}(0)$.
In this case if the $\cS[k]$-cycle is admissible, it is also attracting.

In order to verify admissibility it suffices to write\removableFootnote{
Really we have 
$\det \left( I - M_{\cS[k]}(\ee) \right) = \det \left( I - M_{\cS[k]}(0) \right)
\cO(k\ee) + \cO \left( \lambda_1(0)^{k_{\rm min}} \right) + \cO(\Delta)$,
but the idea is that we may assume that $\ee^*$ is sufficiently small
and $k_{\rm min}$ is sufficiently large that
the $\cO(\Delta)$ error term dominates.
Also we must have $\Delta \gg \ee$, otherwise the interval of $k$-values
that we are considering is the empty set.
The purpose of this contraction is to simplify the error terms in the statements below.
}
$\det \left( I - M_{\cS[k]}(\ee) \right) = \det \left( I - M_{\cS[k]}(0) \right) + \cO(\Delta)$,
where $\det \left( I - M_{\cS[k]}(0) \right) \ne 0$.
The unique fixed point of $g^{\cS[k]}$ is $w^{\cS[k]}_0(\ee)$.
From (\ref{eq:gXZ}) and (\ref{eq:gSkZ}) we find that 
the $j^{\rm th}$ iterate of this point under $g^{\cX}$ is given by\removableFootnote{
By substituting $j = k$ and solving $x^{\cS[k]}_0(\ee) = 0$, we obtain
\begin{equation}
\ee \sim \frac{q_{12} \gamma_{21} \sigma_1 + \left( 1 - \gamma_{12} \gamma_{21} \right) x^{\cX}_0}
{q_{12} \sigma_2'} \lambda_2^{-k} \;,
\end{equation}
where all coefficients are evaluated at $\ee = 0$.
To prove admissibility until $\ee \sim \varphi \lambda_2^{-k}$,
we have to justify the above expression
by writing $w^{\cS[k]}_0(\ee)$ as a series expansion or using the implicit function theorem
(I'm not sure how to do this).
Repeat to find where $x^{\cS[k]}_i(\ee) = 0$, for $i = 0,\ldots,n_{\cX}-1$
and $i = (k-1) n_{\cX},\ldots,k n_{\cX} + n_{\cY}$
(and if in any case the denominator is zero then it may be discounted).
Take the minimum of the constants.
Argue admissibility up until this minimum for all $i$
(would have to generalize my current argument somehow).
}
\begin{equation}
w^{\cS[k]}_{j n_{\cX}}(\ee) =
\frac{1}{\det \left( I - M_{\cS[k]}(\ee) \right)}
\left[ \begin{array}{c}
\lambda_1^j(\ee) \left( 1 - \gamma_{22}(\ee) \lambda_2^k(\ee) \right) \sigma_1(\ee) +
\gamma_{12}(\ee) \lambda_1^j(\ee) \lambda_2^k(\ee) \sigma_2(\ee) \\
\gamma_{21}(\ee) \lambda_1^k(\ee) \lambda_2^j(\ee) \sigma_1(\ee) +
\lambda_2^j(\ee) \left( 1 - \gamma_{11}(\ee) \lambda_1^k(\ee) \right) \sigma_2(\ee)
\end{array} \right] \;,
\label{eq:wSkjZ}
\end{equation}
where $n_{\cX}$ denotes the length of the word $\cX$.
By using the bounds
\begin{equation}
\left( \gamma_{12}(\ee) \sigma_2(\ee) - \gamma_{22}(\ee) \sigma_1(\ee) \right)
\lambda_2^k(\ee) = \cO(\Delta) \;, \qquad
\sigma_2(\ee) \lambda_1^{-k}(\ee) = \cO(\Delta) \;,
\nonumber
\end{equation}
we can conclude from (\ref{eq:wSkjZ}) with $j=0$ that
$w^{\cS[k]}_0(\ee) = w^{\cS[k]}_0(0) + \cO(\Delta)$.
By appropriately iterating $w^{\cS[k]}_0(\ee)$ under $g^{\sL}$ and $g^{\sR}$ we obtain 
\begin{equation}
w^{\cS[k]}_i(\ee) = w^{\cS[k]}_i(0) + \cO(\Delta) \;,
\label{eq:wSkiZ}
\end{equation}
for $i = 0,1,\ldots$ up to any $k$-dependent value.
Importantly, this value must be independent of $k$, because $k$ may be arbitrarily large.
For the purposes of this proof we use (\ref{eq:wSkiZ}) for $i = 0,\ldots,n_{\cX}-1$.
Similarly from (\ref{eq:wSkjZ}) with $j=k-1$
and $n_{\cX} + n_{\cY}$ subsequent iterations of $g^{\sL}$ and $g^{\sR}$ following $\cX \cY$,
we may say that (\ref{eq:wSkiZ}) also holds for all
$i = (k-1) n_{\cX},\ldots, k n_{\cX} + n_{\cY} - 1$.
We can now choose $\Delta > 0$ such that
as $\ee$ ranges from $0$ to $\Delta \lambda_2^{-k}(0)$,
each $w^{\cS[k]}_i(\ee)$ remains on the same side of the switching manifold
for all $i = 0,\ldots,n_{\cX}-1$ and $i = (k-1) n_{\cX},\ldots, k n_{\cX} + n_{\cY} - 1$
(and $\Delta > 0$ is chosen sufficiently small
that the $\cS[k]$-cycles are attracting for large $k$).
Then these points are admissible and
it remains to verify the admissibility of $w^{\cS[k]}_{j n_{\cX} + m}(\ee)$
for all $j = 1,\ldots,k-2$ and $m = 0,\ldots,n_{\cX}-1$.

All points in the triangle with vertices 
$w^{\cX}_m(\ee)$, $w^{\cS[k]}_m(\ee)$ and $w^{\cS[k]}_{(k-1) n_{\cX} + m}(\ee)$
lie on the same side of the switching manifold
for all $0 < \ee \le \Delta \lambda_2^{-k}(0)$,
because, as we have just shown, for each $m$, these three points
are admissible for all values of $\ee$ in this range.
In the case $m = 0$, we note that each
$w^{\cS[k]}_{j n_{\cX}}(\ee)$ is the image of $w^{\cS[k]}_{(j-1) n_{\cX}}(\ee)$ under $g^{\cX}$.
By (\ref{eq:gXZ}), each $w^{\cS[k]}_{j n_{\cX}}(\ee)$
lies inside the given triangle and is therefore admissible\removableFootnote{
This doesn't require as much work as for Lemma \ref{le:lowerBound}
because here $g^{\cX}$ is given exactly.
}.
The same is true for each $m \ne 0$ as
can be seen by repeating this argument for coordinates centered at $w^{\cX}_m(\ee)$
with axes that coincide locally with the stable and unstable manifolds of this point.
\end{proof}

\subsection{A scaling law for the number of attracting periodic solutions}
\label{sub:scaling3}

For small $\ee > 0$, let $\kappa(\ee)$ denote the number of $\cS[k]$-cycles
that are admissible and attracting\removableFootnote{
I may need to mention or explain why that
here we are lazy and not worrying about $\cS[k]$-cycles with points on the switching manifold.
}.
For a suitably small value $\ee^* > 0$, and any positive integer $K$, let
\begin{equation}
\ee_K = \sup_{0 \le \ee \le \ee^*} \left[ \kappa(\ee) = K \right] \;,
\label{eq:eeK}
\end{equation}
denote the supremum value of $\ee$ for which the number of admissible, attracting $\cS[k]$-cycles is equal to $K$.
The following result is a simple consequence of Lemmas \ref{le:upperBoundZ} and \ref{le:lowerBoundZ}.

\begin{theorem}
Suppose $\mu \ne 0$ and that the remaining parameters of (\ref{eq:f}) vary smoothly with $\ee$.
Suppose that when $\ee = 0$ (\ref{eq:f}) satisfies the assumptions of Theorem \ref{th:codim3},
$\gamma_{22}'(0) \ne 0$ and (\ref{eq:distanceToSwManZ}) holds.
If $\gamma_{12}(0) \gamma_{21}(0) = -1$, suppose
$\lambda_1'(0) \lambda_2(0) + \lambda_1(0) \lambda_2'(0) < 0$.
Then as $K \to \infty$,
\begin{equation}
\ee_K \sim \varphi(K) \lambda_2^{-K}(0) \;,
\label{eq:scalingLawZ}
\end{equation}
for a function $\varphi(K)$ that is bounded between positive constants.
\label{th:scaling3}
\end{theorem}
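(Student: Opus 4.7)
The plan is to sandwich $\ee_K$ between two positive constant multiples of $\lambda_2^{-K}(0)$, using Lemmas \ref{le:upperBoundZ} and \ref{le:lowerBoundZ} to control each individual $\cS[k]$-cycle and then converting the per-$k$ information into a statement about the count $\kappa(\ee)$. Together, those lemmas say that for all sufficiently large $k$, the $\cS[k]$-cycle is admissible and attracting throughout $(0,\, \Delta \lambda_2^{-k}(0)]$ and fails to be stable once $\ee$ reaches $\frac{3}{|\gamma_{22}'(0)|} \lambda_2^{-k}(0)$, so each cycle contributes to $\kappa(\ee)$ on an interval whose right endpoint scales like $\lambda_2^{-k}(0)$. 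The desired scaling then arises just by shifting the index $k$ by $K$.

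For the lower bound I would fix $k_{\min}$ and $\Delta$ as supplied by Lemma \ref{le:lowerBoundZ} and take any $\ee \in (0,\, \Delta \lambda_2^{-(K + k_{\min} - 1)}(0)]$. For every integer $k$ with $k_{\min} \le k \le K + k_{\min} - 1$ one then has $\ee \le \Delta \lambda_2^{-k}(0)$, so the $K$ cycles $\cS[k_{\min}], \cS[k_{\min}+1], \ldots, \cS[k_{\min}+K-1]$ are simultaneously admissible and attracting. Hence $\kappa(\ee) \ge K$ throughout this interval, which forces $\ee_K \ge \Delta \lambda_2^{1-k_{\min}}(0)\, \lambda_2^{-K}(0)$. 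For the upper bound, suppose $\kappa(\ee) \ge K$ and list the admissible attracting cycles in order of increasing period index as $k^{(1)} < k^{(2)} < \cdots < k^{(K)}$. Since only finitely many values of $k$ lie below any fixed threshold, there is a $K$-independent constant $k_0$ with $k^{(K)} \ge K + k_0 - 1$, and once $K$ is large enough that $k^{(K)}$ exceeds the $k_{\min}$ of Lemma \ref{le:upperBoundZ}, that lemma applied at $k = k^{(K)}$ gives $\ee < \frac{3}{|\gamma_{22}'(0)|} \lambda_2^{-k^{(K)}}(0) \le \frac{3 \lambda_2^{1-k_0}(0)}{|\gamma_{22}'(0)|}\, \lambda_2^{-K}(0)$. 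Taking the supremum over admissible $\ee$ yields the matching upper bound.

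The main obstacle I foresee is the bookkeeping that converts the per-$k$ lemmas into a clean statement about $\ee_K$ itself. One has to verify that the finitely many cycles with $k$ below the thresholds of Lemmas \ref{le:upperBoundZ} and \ref{le:lowerBoundZ} contribute only a bounded, $K$-independent shift to $\kappa$, and one must be careful with the precise meaning of ``$\kappa(\ee) = K$'' in the definition of $\ee_K$: because $\kappa$ can in principle jump by more than one as $\ee$ varies, the sup over $\{\ee : \kappa(\ee) = K\}$ should be identified with $\sup\{\ee : \kappa(\ee) \ge K\}$ in order for the upper- and lower-bound arguments to mesh. Both points are routine once the definitions are nailed down, and they deliver a prefactor $\varphi(K) = \lambda_2^K(0)\, \ee_K$ bounded between the positive constants $\Delta \lambda_2^{1-k_{\min}}(0)$ and $\frac{3 \lambda_2^{1-k_0}(0)}{|\gamma_{22}'(0)|}$, which is exactly the assertion of the theorem.
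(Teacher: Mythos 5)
Your proposal is correct and follows essentially the same route as the paper: sandwiching $\ee_K$ between $\Delta\lambda_2^{-(K+k_{\rm min})}(0)$ and a constant times $\lambda_2^{-K}(0)$ using Lemma \ref{le:lowerBoundZ} for the lower bound and Lemma \ref{le:upperBoundZ} for the upper bound. The only difference is that you spell out the counting bookkeeping (and the $\kappa(\ee)=K$ versus $\kappa(\ee)\ge K$ reading of (\ref{eq:eeK})) that the paper's shorter proof leaves implicit.
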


\begin{proof}
Let $\hat{\varphi} = \frac{3}{\left| \gamma_{22}'(0) \right|}$, and assume $0 < \ee \le \ee^*$.
By Lemma \ref{le:upperBoundZ}, if
$\ee \ge \hat{\varphi} \lambda_2^{-k}(0)$
then the $\cS[k]$-cycle is virtual or unstable.
By Lemma \ref{le:lowerBoundZ}, there exists $k_{\rm min} \in \mathbb{Z}$ and $\Delta > 0$
such that for all $k \ge k_{\rm min}$,
if $\ee < \Delta \lambda_2^{-k}(0)$ then the $\cS[k]$-cycle is admissible and attracting.
Therefore, the supremum value of $\ee$ for which the $\cS[k]$-cycle is admissible and attracting
lies between $\Delta \lambda_2^{-k}(0)$ and $\hat{\varphi} \lambda_2^{-k}(0)$.
Hence $\ee_K$ is no greater than $\hat{\varphi} \lambda_2^{-K}(0)$,
and $\ee_K$ is no smaller than $\Delta \lambda_2^{-(K+k_{\rm min})}(0)$.
We therefore have (\ref{eq:scalingLawZ}), where
$\Delta \lambda_2^{-k_{\rm min}}(0) \le \varphi(K) \le \hat{\varphi}$.
\end{proof}

The values $\ee_K$ are bifurcations at which an $\cS[k]$-cycle loses either stability or admissibility.
Intuitively we expect that for large $K$ each $\ee_K$ to corresponds to the same type of bifurcation.
Indeed this is case for the three examples given below (and for the three examples of \S\ref{sub:scaling4}).
The bifurcations are border-collision bifurcations at which one point of an $\cS[k]$-cycle
collides with the switching manifold and admissibility is lost.
A search for parameter values where these particular bifurcations occur leads to 
$\ee_K \sim \varphi \lambda_2(0)^{-K}$, for some constant $\varphi$.
It remains to determine whether or not $\varphi(K)$ in (\ref{eq:scalingLawZ}) is constant in general,
and study the nature of the sequence of border-collision bifurcations.
This is discussed further in \S\ref{sec:conc}.

Here we illustrate (\ref{eq:scalingLawZ})
by numerically computing $\cS[k]$-cycles for parameter values near three points in parameter space
satisfying the assumptions of Theorem \ref{th:codim3} that were given in \cite{Si14}.
For each example we consider the arbitrary linear perturbation
\begin{equation}
\tau_{\sL}(\ee) = \tau_{\sL}(0) + a \ee \;, \qquad
\delta_{\sL}(\ee) = \delta_{\sL}(0) + b \ee \;, \qquad
\tau_{\sR}(\ee) = \tau_{\sR}(0) + c \ee \;, \qquad
\delta_{\sR}(\ee) = \delta_{\sR}(0) + d \ee \;,
\label{eq:paramEe}
\end{equation}
where $a,b,c,d \in \mathbb{R}$ are constants.
Furthermore, for each example $\cS[k]$-cycles are attracting for $\ee = 0$
and so there exist a large number of attracting $\cS[k]$-cycles
regardless of the choice of $a$, $b$, $c$ and $d$.
In each case there is a curve passing through the point at $\ee = 0$
along which there are infinitely many attracting $\cS[k]$-cycles.
For each example we use
\begin{equation}
a = 0 \;, \qquad
b = 1 \;, \qquad
c = 0 \;, \qquad
d = 0 \;,
\label{eq:abcdFIC}
\end{equation}
with which $\gamma_{22}'(0) \ne 0$ so that
as we increase $\ee$ from zero we move away from this curve transversely, as is the case for a generic perturbation\removableFootnote{
One can see this easily.
At codimension-three points $\det \left( M_{\cX} \right) = 1$.
With $b \ne 0$ and $d = 0$, the determinant of $M_{\cX}$
changes at a rate asymptotically linearly with respect to $\ee$.
}.
Different values of $a$, $b$, $c$ and $d$ give similar results.

\begin{figure}[b!]
\begin{center}
\setlength{\unitlength}{1cm}
\begin{picture}(14.9,11.3)
\put(.5,5.9){\includegraphics[height=5.4cm]{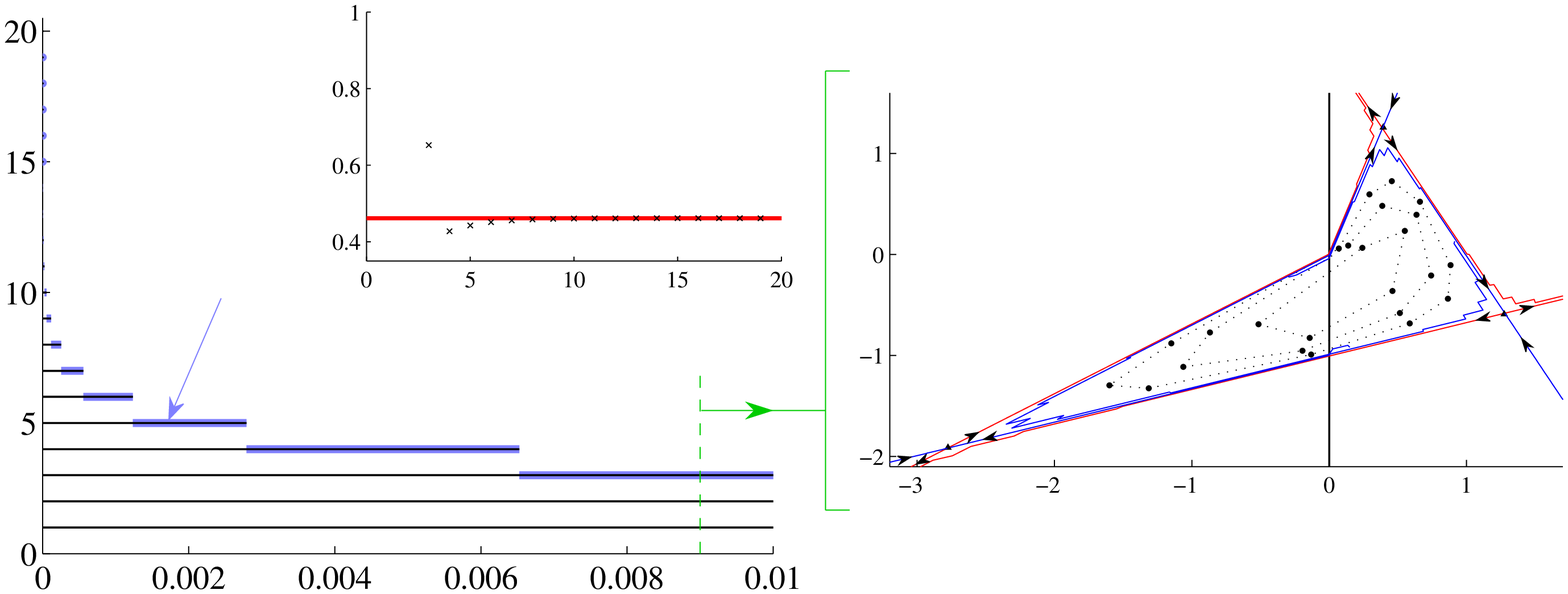}} 
\put(0,0){\includegraphics[height=5.4cm]{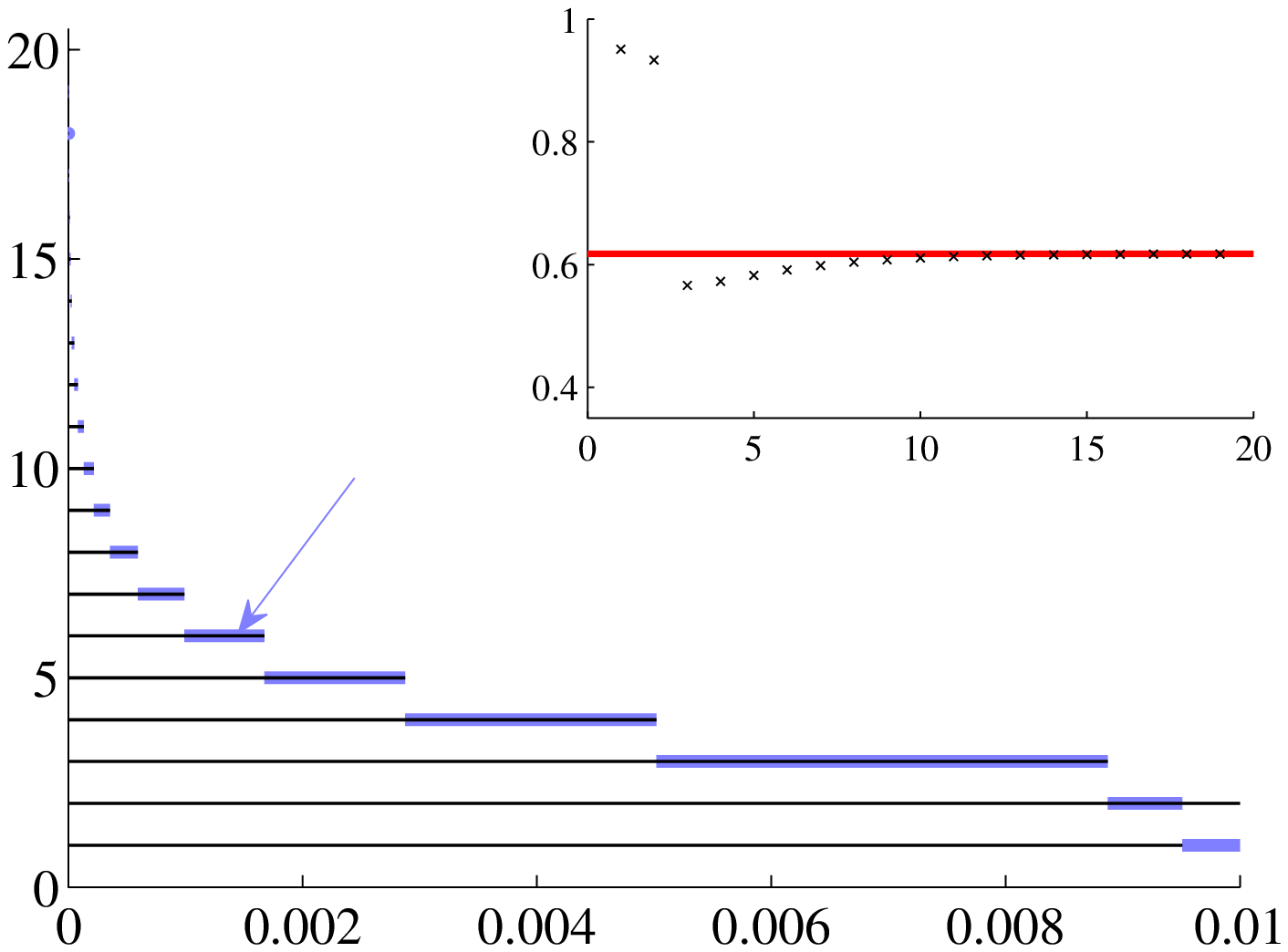}}	
\put(7.7,0){\includegraphics[height=5.4cm]{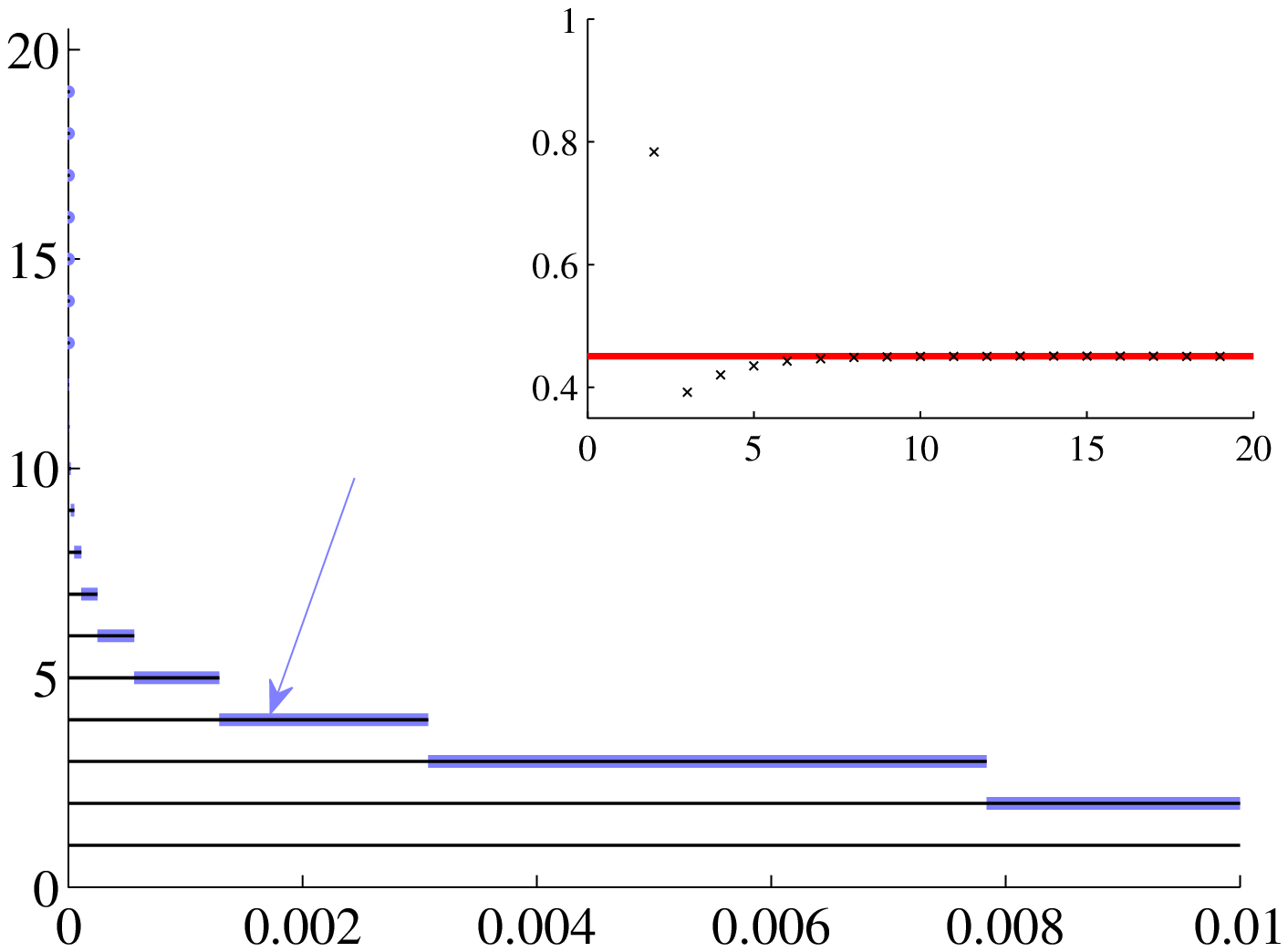}}
\put(11.47,6.8){\scriptsize $x$}
\put(8,8.7){\scriptsize $y$}
\put(10.9,10.6){\scriptsize $\ee = 0.009$}
\put(4.2,5.9){\small $\ee$}
\put(.5,9.2){\small $k$}
\put(3.7,0){\small $\ee$}
\put(0,3.3){\small $	k$}
\put(11.4,0){\small $\ee$}
\put(7.7,3.3){\small $k$}
\put(2.45,8.8){\footnotesize \color{blue} $\kappa(\ee)$}
\put(1.95,2.9){\footnotesize \color{blue} $\kappa(\ee)$}
\put(9.65,2.9){\footnotesize \color{blue} $\kappa(\ee)$}
\put(5.55,8.5){\scriptsize $K$}
\put(2.85,10.1){\scriptsize $\frac{\ee_{K+1}}{\ee_{K}}$}
\put(5.05,2.6){\scriptsize $K$}
\put(2.35,4.2){\scriptsize $\frac{\ee_{K+1}}{\ee_{K}}$}
\put(12.75,2.6){\scriptsize $K$}
\put(10.05,4.2){\scriptsize $\frac{\ee_{K+1}}{\ee_{K}}$}
\put(1.8,11.1){\large \sf \bfseries A}
\put(1.3,5.2){\large \sf \bfseries B}
\put(9,5.2){\large \sf \bfseries C}
\end{picture}
\caption{
Values of $\ee$ for which $\cS[k] = \cX^k \cY$-cycles are admissible, attracting periodic solutions of (\ref{eq:f}) with $\mu = 1$
and the remaining parameter values are of the form (\ref{eq:paramEe}).
Panels A, B and C respectively correspond to examples with
$\cX = \sR \sL \sR$, $\cX = \sR \sL^2 \sR$ and $\cX = \sR \sL \sR \sL \sR$,
as discussed in the text.
For each value of $k$, the interval of values of $\ee$
over which $\cS[k]$-cycles are admissible and attracting is indicated by a horizontal line segment.
The integer-valued function, $\kappa(\ee)$, denotes the number of $\cS[k] = \cX^k \cY$-cycles that are admissible and attracting.
The horizontal line in each inset indicates the corresponding value of $\frac{1}{\lambda_2(0)} = \lambda_1(0)$
(the stable stability multiplier of the $\cX$-cycle when $\ee = 0$).
Also included is a phase portrait corresponding to $\ee = 0.009$ in panel A.
Each point of the three $\cS[k]$-cycles that are admissible and attracting ($k=1,2,3$)
is connected by a dotted line segment to its third iterate under (\ref{eq:f}).
Parts of the stable and unstable manifolds of the $\cX$-cycle are also shown.
\label{fig:manyEeAllZ}
}
\end{center}
\end{figure}

We first consider
\begin{equation}
\tau_{\sL}(0) = -\frac{55}{117} \;, \qquad
\delta_{\sL}(0) = \frac{4}{9} \;, \qquad
\tau_{\sR}(0) = -\frac{5}{2} \;, \qquad
\delta_{\sR}(0) = \frac{3}{2} \;.
\label{eq:paramF}
\end{equation}
At these parameter values (\ref{eq:f}) with $\mu = 1$
has an $\cS[k]$-cycle for all $k \ge 1$, where
\begin{equation}
\cX = \sR \sL \sR \;, \qquad
\cY = \sL \sR \;.
\label{eq:XYF}
\end{equation}
In panel A of Fig.~\ref{fig:manyEeAllZ},
horizontal line segments indicate the range of values of $\ee$ for which $\cS[k]$-cycles
are admissible and attracting.
These line segments emanate from $\ee = 0$ for all $k \ge 1$ because when $\ee = 0$
the $\cS[k]$-cycles are admissible and stable for each of these values of $k$.
In general the number of coexisting attracting $\cS[k]$-cycles, $\kappa(\ee)$,
is equal to the number of line segments that intersect the given value of $\ee$.
Here $\kappa(\ee)$ simply
coincides with the largest value of $k$ for which the $\cS[k]$-cycle is admissible and stable.

The values, $\ee_K$, are the right-hand end-points of the horizontal line segments.
These are border-collision bifurcations where
the $(3K)^{\rm th}$ point of the $\cS[K]$-cycle collides with the switching manifold.
If $\varphi(K)$ is a constant, the scaling law (\ref{eq:scalingLawZ}) predicts that
$\frac{\ee_{K+1}}{\ee_K} \to \frac{1}{\lambda_2(0)} = \lambda_1(0)$, as $K \to \infty$.
For this example $\frac{1}{\lambda_2(0)} = \frac{6}{13} \approx 0.4615$,
and, as shown in the inset of Fig.~\ref{fig:manyEeAllZ}-A,
this prediction is consistent with the data.

With $\ee = 0.009$, for example, we have $\kappa = 3$.
The corresponding three $\cS[k]$-cycles are shown to the right of panel A.
The $\cX$-cycle is also shown here.
The stable manifold of the $\cX$-cycle has a complicated structure
(for clarity only part of the manifold near the $\cX$-cycle is shown)
but does not appear to intersect the unstable manifold of the $\cX$-cycle at these parameter values.

The values
\begin{equation}
\tau_{\sL}(0) = 0.5 \;, \qquad
\delta_{\sL}(0) = \frac{1}{\delta_{\sR}} \;, \qquad
\tau_{\sR}(0) = -1.139755486 \;, \qquad
\delta_{\sR}(0) = 1.378851759 \;,
\label{eq:paramI}
\end{equation}
approximate (to ten significant figures)
a point in parameter space at which (\ref{eq:f}) with $\mu = 1$
has infinitely many attracting $\cS[k]$-cycles, where
\begin{equation}
\cX = \sR \sL^2 \sR \;, \qquad
\cY = \sL^2 \sR \;.
\label{eq:XYI}
\end{equation}
Here $\frac{1}{\lambda_2(0)} \approx 0.6175$ and,
as shown in Fig.~\ref{fig:manyEeAllZ}-B,
the scaling law (\ref{eq:scalingLawZ}) appears valid.

Finally, panel C of Fig.~\ref{fig:manyEeAllZ} corresponds to
\begin{equation}
\tau_{\sL}(0) = -0.7 \;, \qquad
\delta_{\sL}(0) = \delta_{\sR}^{-\frac{3}{2}} \;, \qquad
\tau_{\sR}(0) = -3.308423793 \;, \qquad
\delta_{\sR}(0) = 1.659870677 \;,
\label{eq:paramC}
\end{equation}
and
\begin{equation}
\cX = \sR \sL \sR \sL \sR \;, \qquad
\cY = \sL \sR \;.
\label{eq:XYC}
\end{equation}
Here $\frac{1}{\lambda_2(0)} \approx 0.4507$,
and $\frac{\ee_{K+1}}{\ee_K}$ appears to limit to this value.

\section{Multistability due to a repeated unit eigenvalue}
\label{sec:codim4}
\setcounter{equation}{0}

This section investigates the coexistence of a large number of stable $\cX^k \cY$-cycles
due to $M_{\cX}$ having a repeated unit eigenvalue.
In \S\ref{sub:codim4} it is shown that infinite coexistence via this mechanism is a codimension-four phenomenon.
Additional properties of the $\cX^k \cY$-cycles are discussed in \S\ref{sub:structure4}
and three examples of codimension-four points are given in \S\ref{sub:examples4}.

We first show that if the geometric multiplicity of the repeated unit eigenvalue of
$M_{\cX}$ is $1$ (that is, the corresponding eigenspace is one-dimensional, as is generically the case)\removableFootnote{
If the geometric multiplicity is $2$,
then $M_{\cX}$ must be the identity matrix and it is not clear that this is possible.
Certainly if $\tau_{\sL} = \delta_{\sL} = \tau_{\sR} = \delta_{\sR} = 1$,
then $M_{\cX} = I$ with $n_{\cX} = 6$.
But for this example $A_{\sL} = A_{\sR}$.
I have the following conjecture:
{\em If $A_L \ne A_R$, then for any primitive $\cX$, $M_{\cX} \ne I$.}
},
then the corresponding eigenspace cannot be tangent to the switching manifold.

\begin{lemma}
Let $\cS[k] = \cX^k \cY$, where $\cX$ is primitive and $\cX_0 \ne \cY_0$.
Let $\tau_{\sL}, \delta_{\sL}, \tau_{\sR}, \delta_{\sR} \in \mathbb{R}$ and $\mu \ne 0$.
Suppose there exist infinitely many values of $k \ge 1$ for which (\ref{eq:f})
exhibits a unique, admissible, stable $\cS[k]$-cycle that has no points on the switching manifold.
Suppose that $M_{\cX}$ has a repeated unit eigenvalue of geometric multiplicity $1$.
Then $[0,1]^{\sf T}$ is not an eigenvector of $M_{\cX}$.
\label{le:eigenvector}
\end{lemma}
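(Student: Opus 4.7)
My plan is to argue by contradiction: assume $[0,1]^{\sf T}$ is an eigenvector of $M_{\cX}$ and derive an incompatibility with admissibility. Since $M_{\cX}$ has a repeated unit eigenvalue, the eigenvector's eigenvalue must be $1$, so the second column of $M_{\cX}$ equals $[0,1]^{\sf T}$; the repeated-eigenvalue trace and determinant constraints then force
\[
M_{\cX} \;=\; I + N, \qquad N \;=\; \begin{bmatrix} 0 & 0 \\ b & 0 \end{bmatrix},
\]
where $b \ne 0$ (otherwise the geometric multiplicity would be two). Because $N^2 = 0$, iterating the affine map $f^{\cX}(w) = M_{\cX}\,w + c_{\cX}$ with $c_{\cX} = (c_1, c_2)^{\sf T}$ gives
\[
(f^{\cX})^j(w) \;=\; (I + j N)\,w \;+\; \bigl( j I + \tbinom{j}{2} N \bigr)\, c_{\cX},
\]
and in particular the first coordinate of $(f^{\cX})^j(w)$ is the first coordinate of $w$ shifted by $j c_1$.

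The key geometric step is to apply this with $w = w^{\cS[k]}_0$. The return points $w^{\cS[k]}_{j n_{\cX}}$ for $j = 0, 1, \ldots, k$ then have first coordinates $x^{\cS[k]}_0 + j c_1$. Admissibility, together with the absence of switching-manifold points, demands that this value have the strict sign of $\cX_0$ for $j = 0, \ldots, k - 1$ (since $\cS_{j n_{\cX}} = \cX_0$ in that range) but the strict sign of $\cY_0$ at $j = k$ (since $\cS_{k n_{\cX}} = \cY_0$). Because $\cX_0 \ne \cY_0$, the arithmetic progression $x^{\cS[k]}_0 + j c_1$ must change sign strictly between $j = k - 1$ and $j = k$. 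The case $c_1 = 0$ is immediately excluded (the progression would be constant), and for $c_1 \ne 0$ the starting point $x^{\cS[k]}_0$ is pinned into an interval of width $|c_1|$ whose endpoints lie at $(k - 1) |c_1|$ and $k |c_1|$ (with the overall sign dictated by $\cX_0$). Letting $k$ tend to infinity through the infinite subsequence of admissible values yields
\[
\frac{x^{\cS[k]}_0}{k} \;\longrightarrow\; -c_1 .
\]

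To derive the contradiction I compute $x^{\cS[k]}_0$ from the fixed-point equation $(I - M_{\cS[k]})\,w^{\cS[k]}_0 = c_{\cS[k]}$. Denote the entries of $M_{\cY}$ by $p, q, r, s$ in the standard reading order. Using $M_{\cS[k]} = M_{\cY}(I + k N)$ gives $\text{trace}(M_{\cS[k]}) = p + s + k b q$ while $\det(M_{\cS[k]}) = \det(M_{\cY})$ is independent of $k$. Stability of infinitely many $\cS[k]$-cycles is incompatible with the trace growing linearly in $k$, so $q = 0$; uniqueness of the cycle then requires $p \ne 1$. With $q = 0$, the matrix $I - M_{\cS[k]}$ is lower triangular and the first row of the linear system reduces to $(1 - p)\, x^{\cS[k]}_0 = p k c_1 + \alpha$ for some constant $\alpha$ independent of $k$. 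Hence $x^{\cS[k]}_0 / k \to p c_1 / (1 - p)$. Equating the two asymptotic values and cancelling $c_1 \ne 0$ gives $p / (1 - p) = -1$, i.e.\ $0 = -1$, a contradiction.

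I expect the main obstacle to be the geometric step: extracting the asymptotic $x^{\cS[k]}_0 / k \to -c_1$ from the strict admissibility inequalities requires carefully tracking the transition at $j = k$, where the governing symbol flips from $\cX_0$ to $\cY_0$. Once that limit is secured, the stability and uniqueness hypotheses reduce the remainder of the proof to routine linear algebra and a one-line algebraic contradiction.
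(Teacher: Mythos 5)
Your proof is correct and takes essentially the same route as the paper's: reduce $M_{\cX}$ to the unipotent lower-triangular form, use stability of infinitely many $\cS[k]$-cycles to force the $(1,2)$-entry of $M_{\cY}$ to vanish and uniqueness to get $p \ne 1$, then exploit the explicit linear-in-$k$ expressions for the first coordinates of the return points at $j = k-1$ and $j = k$. The paper phrases the final contradiction by showing those two coordinates have the same sign for large $k$, whereas you equate two asymptotic values of $x^{\cS[k]}_0/k$ coming from admissibility and from the fixed-point equation -- the same computation, packaged slightly differently.
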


\begin{proof}
Let $\left( x^{\cS[k]}_i, y^{\cS[k]}_i \right)$ denote the points of an $\cS[k]$-cycle.
Since the symbols $\cS[k]_{(k-1) n_{\cX}} = \cX_0$ and $\cS[k]_{k n_{\cX}} = \cY_0$
are different, by assumption,
if the $\cS[k]$-cycle is admissible with no points on the switching manifold,
then the signs of $x^{\cS[k]}_{(k-1) n_{\cX}}$ and $x^{\cS[k]}_{k n_{\cX}}$ must be different.
Since admissibility is assumed to hold for infinitely many values of $k$,
the signs of $x^{\cS[k]}_{(k-1) n_{\cX}}$ and $x^{\cS[k]}_{k n_{\cX}}$ must be different
for arbitrarily large values of $k$.

In Appendix \ref{app:eigenvector} we show that if
$[0,1]^{\sf T}$ is an eigenvector of $M_{\cX}$\removableFootnote{
It follows immediately that $x \mapsto x + \rho_1$ under $\cX$,
but this does not provide an immediate contradiction.
We actually have to solve for the points $\left( x^{\cS[k]}_i, y^{\cS[k]}_i \right)$.
},
then for all $j = 0,\ldots,k$,
\begin{equation}
x^{\cS[k]}_{j n_{\cX}} = \rho_1 j
+ \frac{\rho_1 \gamma_{11}}{1-\gamma_{11}} k
+ \frac{\sigma_1}{1-\gamma_{11}} \;,
\label{eq:xSkj}
\end{equation}
for some constants $\rho_1, \gamma_{11}, \sigma_1 \in \mathbb{R}$,
with $\gamma_{11} \ne 1$.
(Take care to note that these constants differ
from those used in the remainder of this section.)
By (\ref{eq:xSkj}) we have, in particular,
\begin{equation}
x^{\cS[k]}_{(k-1) n_{\cX}} =
\frac{\rho_1}{1-\gamma_{11}} k
+ \frac{\sigma_1}{1-\gamma_{11}} - \rho_1 \;, \qquad
x^{\cS[k]}_{k n_{\cX}} =
\frac{\rho_1}{1-\gamma_{11}} k
+ \frac{\sigma_1}{1-\gamma_{11}} \;.
\nonumber
\end{equation}
Therefore, regardless of the values of
$\rho_1$, $\gamma_{11}$ and $\sigma_1$,
the signs of $x^{\cS[k]}_{(k-1) n_{\cX}}$ and $x^{\cS[k]}_{k n_{\cX}}$
are the same for large values of $k$.
This contradicts the admissibility assumption,
hence $[0,1]^{\sf T}$ cannot be an eigenvector of $M_{\cX}$.
\end{proof}

\subsection{A codimension-four scenario for infinite coexistence}
\label{sub:codim4}

Here we first introduce an alternate coordinate system
in which calculations are simplified because $M_{\cX}$ is transformed to a triangular matrix,
and then state and describe consequences of infinite coexistence due a repeated unit eigenvalue.

Suppose $M_{\cX}$ has a repeated unit eigenvalue with a one-dimensional eigenspace
equal to all non-zero scalar multiples of $[1,\nu]^{\sf T}$, for some $\nu \in \mathbb{R}$.
In order to transform $M_{\cX}$ to a triangular matrix, we let
\begin{equation}
Q = \left[ \begin{array}{cc} 1 & 0 \\ \nu & 1 \end{array} \right] \;,
\label{eq:Q}
\end{equation}
and consider the coordinate change
\begin{equation}
\left[ \begin{array}{c} u \\ v \end{array} \right] =
Q^{-1} \left[ \begin{array}{c} x \\ y \end{array} \right] \;.
\label{eq:uv}
\end{equation}
We let $w = (u,v)$,
and for any periodic symbol sequence $\cS$, let $g^{\cS}$ denote $f^{\cS}$ in $(u,v)$-coordinates.
Then
\begin{equation}
g^{\cX}(w) = \left[ \begin{array}{cc} 1 & \omega_{12} \\ 0 & 1 \end{array} \right] w
+ \left[ \begin{array}{c} \rho_1 \\ \rho_2 \end{array} \right] \;,
\label{eq:gX}
\end{equation}
where $\omega_{12} \ne 0$ (because the eigenspace of $M_{\cX}$ is one-dimensional)
and $\rho_1, \rho_2 \in \mathbb{R}$.
The matrix part of (\ref{eq:gX}) represents the transformation of $M_{\cX}$ (the matrix part of $f^{\cS}$)
to $(u,v)$-coordinates.
Also we write
\begin{equation}
g^{\cY}(w) = \left[ \begin{array}{cc} \gamma_{11} & \gamma_{12} \\ \gamma_{21} & \gamma_{22} \end{array} \right] w
+ \left[ \begin{array}{c} \sigma_1 \\ \sigma_2 \end{array} \right] \;,
\label{eq:gY}
\end{equation}
for some real-valued constants $\gamma_{ij}$, $\sigma_1$ and $\sigma_2$.
Note that unlike for the $\ee$-dependent $(u,v)$-coordinate system used in \S\ref{sec:codimperturb3},
here the switching manifold is simply $u=0$.

\begin{theorem}
Let $\cS[k] = \cX^k \cY$, where $\cX$ is primitive and $\cX_0 \ne \cY_0$.
Let $\tau_{\sL}, \delta_{\sL}, \tau_{\sR}, \delta_{\sR} \in \mathbb{R}$ and $\mu \ne 0$.
Suppose there exist infinitely many values of $k \ge 1$ for which (\ref{eq:f})
exhibits a unique, admissible, stable $\cS[k]$-cycle that has no points on the switching manifold.
Suppose that $M_{\cX}$ has a repeated unit eigenvalue of geometric multiplicity $1$.
Then the coefficients of $g^{\cX}$ (\ref{eq:gX}) and $g^{\cY}$ (\ref{eq:gY}) satisfy
\begin{enumerate}[label=\roman{*}),ref=\roman{*}]
\item
\label{it:gamma21}
$\gamma_{21} = 0$;
\item
\label{it:rho2}
$\rho_2 \ne 0$;
\item
\label{it:gamma22}
$\gamma_{22} = -1$.
\end{enumerate}
Furthermore, the stability multipliers of the $\cS[k]$-cycles are $\gamma_{11}$ and $-1$.
The $\cS[k]$-cycles are not attracting and if $\delta_{\sL} = \delta_{\sR} = 1$, then $\gamma_{11} = -1$.
\label{th:codim4}
\end{theorem}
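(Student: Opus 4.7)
The plan is to exploit the triangular form of $g^{\cX}$ in (\ref{eq:gX}) to write $(g^{\cX})^k$ in closed form, assemble $g^{\cS[k]}=g^{\cY}\circ(g^{\cX})^k$ explicitly as a $k$-dependent affine map, and then pit the resulting expressions against the hypotheses of stability, uniqueness, and admissibility for infinitely many $k$. A direct induction gives
\[
(g^{\cX})^k(w)=\left[\begin{array}{cc}1 & k\omega_{12}\\ 0 & 1\end{array}\right]w+\left[\begin{array}{c}k\rho_1+\omega_{12}\frac{k(k-1)}{2}\rho_2\\ k\rho_2\end{array}\right],
\]
so the matrix part of $g^{\cS[k]}$ has trace $\gamma_{11}+\gamma_{22}+k\omega_{12}\gamma_{21}$ and determinant $\gamma_{11}\gamma_{22}-\gamma_{12}\gamma_{21}$. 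Combining (\ref{eq:stabConditionSN})--(\ref{eq:stabConditionNS}) gives $|{\rm trace}(M_{\cS[k]})|\le 1+\det(M_{\cS[k]})\le 2$; since $\omega_{12}\ne 0$, the trace cannot be unbounded in $k$, forcing $\gamma_{21}=0$. This is~(i).

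With $\gamma_{21}=0$ the matrix part of $g^{\cS[k]}$ is upper triangular with eigenvalues $\gamma_{11}$ and $\gamma_{22}$ independent of $k$. Uniqueness rules out $\gamma_{11}=1$ or $\gamma_{22}=1$, and stability pins both in $[-1,1)$. I would then solve $g^{\cS[k]}(w_0^{\cS[k]})=w_0^{\cS[k]}$: the second coordinate decouples to give $v_0^{\cS[k]}=(\gamma_{22}k\rho_2+\sigma_2)/(1-\gamma_{22})$, and iterating under $g^{\cX}$ yields
\[
u_{jn_{\cX}}^{\cS[k]}=u_0^{\cS[k]}+j\omega_{12}v_0^{\cS[k]}+j\rho_1+\omega_{12}\frac{j(j-1)}{2}\rho_2,
\]
which becomes a polynomial in $(j,k)$ once the first fixed-point equation is used to eliminate $u_0^{\cS[k]}$.

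The crux of the proof, and the step I expect to be the most delicate, is the leading-order comparison of $u_{(k-1)n_{\cX}}^{\cS[k]}$ and $u_{kn_{\cX}}^{\cS[k]}$ for large $k$. Because $\cX_0\ne\cY_0$ and no point of the cycle lies on $u=0$, admissibility for infinitely many $k$ demands that these two values have opposite signs. If $\rho_2=0$, then $v_0^{\cS[k]}$ is constant in $k$ and both quantities are affine in $k$ with common leading coefficient $(\omega_{12}v_0^{\cS[k]}+\rho_1)/(1-\gamma_{11})$; if that coefficient vanishes then $u$ is constant along $\cX$, forcing the two points to coincide and the cycle onto the switching manifold, while otherwise the two values share the same sign for large $k$. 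Either alternative contradicts the hypotheses, proving~(ii). With $\rho_2\ne 0$, the same expansion shows both values have leading $k^2$ coefficient equal to $\omega_{12}\rho_2(1+\gamma_{22})/[2(1-\gamma_{11})(1-\gamma_{22})]$; as $\omega_{12}$, $\rho_2$, $1-\gamma_{11}$, and $1-\gamma_{22}$ are all nonzero, the only way to recover opposite signs is the cancellation $1+\gamma_{22}=0$, giving~(iii).

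The stability multipliers of the $\cS[k]$-cycle are then $\gamma_{11}$ and $-1$, and $-1$ on the unit circle rules out attraction. Finally, if $\delta_{\sL}=\delta_{\sR}=1$, then $\det(A_{\sL})=\det(A_{\sR})=1$, so multiplicativity of the determinant gives $\det(M_{\cS[k]})=1$; on the other hand the upper-triangular form gives $\det(M_{\cS[k]})=\gamma_{11}\gamma_{22}=-\gamma_{11}$, so $\gamma_{11}=-1$.
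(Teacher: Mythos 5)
Your proposal is correct and follows essentially the same route as the paper: compute $(g^{\cX})^k$ explicitly, use boundedness of ${\rm trace}(M_{\cS[k]})$ under the stability conditions to force $\gamma_{21}=0$, then expand the fixed point and its $g^{\cX}$-iterates and compare the signs of $u^{\cS[k]}_{(k-1)n_{\cX}}$ and $u^{\cS[k]}_{k n_{\cX}}$ (affine in $k$ when $\rho_2=0$, leading $k^2$ coefficient $\omega_{12}\rho_2(1+\gamma_{22})/[2(1-\gamma_{11})(1-\gamma_{22})]$ otherwise) to obtain $\rho_2\ne 0$ and $\gamma_{22}=-1$, finishing with the triangular structure and the determinant for the area-preserving case. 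Your handling of the degenerate subcase where the affine leading coefficient vanishes (the points coincide and would have to lie on the switching manifold) is in fact slightly more explicit than the paper's corresponding assertion that $\eta\ne 0$.
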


The assumption that $M_{\cX}$ has a repeated unit eigenvalue is a codimension-two restriction
on the parameter values of (\ref{eq:f}).
Assuming that there is no degeneracy in the conditions $\gamma_{21} = 0$ and $\gamma_{22} = -1$,
it follows that Theorem \ref{th:codim4} describes a scenario that is at least codimension-four.
From the results of the next section we conclude that this scenario is codimension-four.
Indeed, higher codimension scenarios are not possible for the two-dimensional border-collision form with $\mu \ne 0$
because there are only four parameters that we may vary.

\begin{proof}[Proof of Theorem \ref{th:codim4}]
We prove parts (\ref{it:gamma21})-(\ref{it:gamma22}) in order.
\begin{enumerate}[label=\roman{*})]
\item
We first combine (\ref{eq:gX}) and (\ref{eq:gY}) in order to obtain an expression for the map $g^{\cS[k]}$.
Straight-forward calculations reveal that powers of (\ref{eq:gX}) are given by\removableFootnote{
With $\Omega = \left[ \begin{array}{cc} 1 & \omega_{12} \\ 0 & 1 \end{array} \right]$,
$g^{\cX^k}(w) = \Omega^k w + \sum_{m=0}^{k-1} \Omega^m
\left[ \begin{array}{c} \rho_1 \\ \rho_2 \end{array} \right]$.
}
\begin{equation}
g^{\cX^k}(w) = \left[ \begin{array}{cc} 1 & \omega_{12} k \\ 0 & 1 \end{array} \right] w
+ \left[ \begin{array}{c} \rho_1 k + \frac{\rho_2 \omega_{12}}{2} k (k-1) \\ \rho_2 k \end{array} \right] \;.
\label{eq:gXk}
\end{equation}
The composition of (\ref{eq:gY}) and (\ref{eq:gXk}) is
\begin{equation}
g^{\cS[k]}(w) = \left[ \begin{array}{cc}
\gamma_{11} & \gamma_{11} \omega_{12} k + \gamma_{12} \\
\gamma_{21} & \gamma_{21} \omega_{12} k + \gamma_{22}
\end{array} \right] w
+ \left[ \begin{array}{c}
\gamma_{11} \rho_1 k + \frac{\gamma_{11} \rho_2 \omega_{12}}{2} k (k-1) + \gamma_{12} \rho_2 k + \sigma_1 \\
\gamma_{21} \rho_1 k + \frac{\gamma_{21} \rho_2 \omega_{12}}{2} k (k-1) + \gamma_{22} \rho_2 k + \sigma_2
\end{array} \right] \;.
\label{eq:gSk}
\end{equation}
Since (\ref{eq:uv}) is a non-singular coordinate transformation,
the matrix parts of $f^{\cS[k]}$ and $g^{\cS[k]}$ have identical eigenvalues.
Therefore, ${\rm trace} \left( M_{\cS[k]} \right) = \gamma_{21} \omega_{12} k + \gamma_{11} + \gamma_{22}$.
But $\cS[k]$-cycles are assumed to be stable for arbitrarily large values of $k$.
This is only possible if ${\rm trace} \left( M_{\cS[k]} \right) \not\to \infty$,
therefore we must have $\gamma_{21} = 0$ (since $\omega_{12} \ne 0$).
\item
With $\gamma_{21} = 0$, the matrix part of $g^{\cS[k]}$ is upper triangular
with diagonal entries, $\gamma_{11}$ and $\gamma_{22}$.
Hence these are the eigenvalues of $M_{\cS[k]}$.
By the results of \S\ref{sec:backg},
we require $\gamma_{11}, \gamma_{22} \ne 1$ in order for $\cS[k]$-cycles to be unique.

In $(u,v)$-coordinates, we denote $\cS[k]$-cycles by
$w^{\cS[k]}_i = \left( u^{\cS[k]}_i, v^{\cS[k]}_i \right)$,
for $i = 0, \ldots, k n_{\cX} + n_{\cY} - 1$.
Then $w^{\cS[k]}_0$ is the unique fixed point of (\ref{eq:gSk}).
By computing this point and iterating it $j$ times under $g^{\cX}$ (\ref{eq:gX}), we obtain the formula\removableFootnote{
See {\sc calcSymSkCycle.m}
}\removableFootnote{
Here my error terms are of a different nature than everywhere else in this paper,
but I don't see how to improve it.
}
\begin{equation}
w^{\cS[k]}_{j n_{\cX}} = \left[ \begin{array}{c}
\frac{\omega_{12} \rho_2}{2} \left( j^2 + \frac{2 \gamma_{22}}{1-\gamma_{22}} jk
+ \frac{\gamma_{11} \left( 1 + \gamma_{22} \right)}{\left( 1 - \gamma_{11} \right) \left( 1 - \gamma_{22} \right)} k^2
\right) + \cO \left( j,k \right) \\
\rho_2 j + \frac{\gamma_{22} \rho_2}{1 - \gamma_{22}} k + \frac{\sigma_2}{1-\gamma_{22}}
\end{array} \right] \;,
\label{eq:wSkj}
\end{equation}
valid for $j = 0,\ldots,k$.

Next we use the assumption that $\cS[k]$-cycles are admissible to show that $\rho_2 \ne 0$.
Note, the symbols $\cS[k]_{(k-1) n_{\cX}} = \cX_0$ and $\cS[k]_{k n_{\cX}} = \cY_0$ are different, by assumption,
therefore $w^{\cS[k]}_{(k-1) n_{\cX}}$ and $w^{\cS[k]}_{k n_{\cX}}$ lie on different sides of the switching manifold ($u=0$)
for infinitely many values of $k$.
If $\rho_2 = 0$, then by (\ref{eq:wSkj}), 
$u^{\cS[k]}_{(k-1) n_{\cX}} = \eta k + \upsilon_1$ and
$u^{\cS[k]}_{k n_{\cX}} = \eta k + \upsilon_2$,
for some constants $\eta, \upsilon_1, \upsilon_2 \in \mathbb{R}$.
We must have $\eta \ne 0$, so that $w^{\cS[k]}_{k n_{\cX}}$ actually differs with $k$.
But in this case $u^{\cS[k]}_{(k-1) n_{\cX}}$ and $u^{\cS[k]}_{k n_{\cX}}$ have the same sign for large $k$, which is a contradiction.
\item
By (\ref{eq:wSkj}),
\begin{align}
u^{\cS[k]}_{(k-1) n_{\cX}} &=
\frac{\omega_{12} \rho_2 \left( 1 + \gamma_{22} \right)}
{2 \left( 1 - \gamma_{11} \right) \left( 1 - \gamma_{22} \right)} k^2 + \cO(k) \;, \\
u^{\cS[k]}_{k n_{\cX}} &=
\frac{\omega_{12} \rho_2 \left( 1 + \gamma_{22} \right)}
{2 \left( 1 - \gamma_{11} \right) \left( 1 - \gamma_{22} \right)} k^2 + \cO(k) \;,
\end{align}
where $\omega_{12}, \rho_2 \ne 0$.
Hence in order for $u^{\cS[k]}_{(k-1) n_{\cX}}$ and $u^{\cS[k]}_{k n_{\cX}}$ to have different signs for large values of $k$
we must have $\gamma_{22} = -1$.
\end{enumerate}
Finally, the stability multipliers of the $\cS[k]$-cycles are the eigenvalues of $M_{\cS[k]}$,
which are $\gamma_{11}$ and $\gamma_{22} = -1$, as mentioned above.
Since $\gamma_{22} = -1$ does not have modulus less $1$,
the $\cS[k]$-cycles are not attracting (specifically, there is equality in (\ref{eq:stabConditionPD})).
If $\delta_{\sL} = \delta_{\sR} = 1$,
then (\ref{eq:f}) is area-preserving and so the eigenvalues of $M_{\cS[k]}$ must multiply to $1$.
Thus in this case $\gamma_{11} = -1$.
\end{proof}

\subsection{The structure of the $\cX^k \cY$-cycles.}
\label{sub:structure4}

In $(u,v)$-coordinates, we denote points of $\cS[k]$-cycles by
$w^{\cS[k]}_i$, for $i = 0,\ldots,k n_{\cX} + n_{\cY} - 1$,
where $n_{\cX}$ and $n_{\cY}$ are the lengths of the words $\cX$ and $\cY$.
In view of part (\ref{it:gamma22}) of Theorem \ref{th:codim4},
we substitute $\gamma_{22} = -1$ into (\ref{eq:wSkj}) which gives
\begin{equation}
w^{\cS[k]}_{j n_{\cX}} = \left[ \begin{array}{c}
-\frac{\omega_{12} \rho_2}{2} j(k-j) + \cO \left( j,k \right) \\
\rho_2 \left( j - \frac{1}{2} k \right) + \frac{\sigma_2}{2}
\end{array} \right] \;,
\label{eq:wSkj2}
\end{equation}
valid for all $j = 0,\ldots,k$.
Since $\cS[k]_{j n_{\cX}} = \cX_0$, for all $j = 0,\ldots,k-1$,
if $\cS[k]$-cycles are admissible for large $k$, and, say, $\cX_0 = \sR$, then by (\ref{eq:wSkj2})
we must have $\omega_{12} \rho_2 < 0$, because $\omega_{12} \ne 0$ and $\rho_2 \ne 0$.
The converse is true if $\cX_0 = \sL$, that is,
\begin{equation}
{\rm if~} \cX_0 = \sR, {\rm ~then~} \omega_{12} \rho_2 < 0 \;, \qquad
{\rm if~} \cX_0 = \sL, {\rm ~then~} \omega_{12} \rho_2 > 0 \;.
\label{eq:signomega12rho2}
\end{equation}

By (\ref{eq:wSkj2}), for fixed $k$ the points $w^{\cS[k]}_{j n_{\cX}}$,
for $j=0,\ldots,k$, lie on a parabola.
The point furthest from the origin has $j \approx \frac{k}{2}$,
and for large $k$ the direction of this point from the origin is roughly the
same as the direction of the vector $[1,0]^{\sf T}$.
We say that the points $w^{\cS[k]}_{j n_{\cX}}$ align asymptotically with $[1,0]^{\sf T}$,
because the size of the convex hull of the points is proportional to $k^2$,
yet each point is only an $\cO(k)$ distance from a scalar multiple of $[1,0]^{\sf T}$.

To investigate the behavior of the other points of the $\cS[k]$-cycles, let us write
\begin{equation}
\left( g^{\cX_{m-1}} \circ \cdots \circ g^{\cX_0} \right)(w) =
\left[ \begin{array}{cc} \psi_{11m} & \psi_{12m} \\ \psi_{21m} & \psi_{22m} \end{array} \right] w + 
\left[ \begin{array}{c} \chi_{1m} \\ \chi_{2m} \end{array} \right] \;,
\label{eq:gXtruncated}
\end{equation}
for six $m$-dependent coefficients $\psi_{ijm}$, $\chi_{1m}$ and $\chi_{2m}$.
The map (\ref{eq:gXtruncated}) gives the image of a point $w$ under $m$ iterations of (\ref{eq:f}) following $\cX$.
For $m=0$, (\ref{eq:gXtruncated}) is the identity map.
For all $j = 0,\ldots,k-1$ and $m = 0,\ldots,n_{\cX}-1$,
\begin{equation}
w^{\cS[k]}_{j n_{\cX} + m} =
\left[ \begin{array}{cc} \psi_{11m} & \psi_{12m} \\ \psi_{21m} & \psi_{22m} \end{array} \right]
w^{\cS[k]}_{j n_{\cX}} + 
\left[ \begin{array}{c} \chi_{1m} \\ \chi_{2m} \end{array} \right] \;.
\label{eq:wSkji}
\end{equation}
In view of (\ref{eq:wSkj2}), the points $w^{\cS[k]}_{j n_{\cX} + m}$
align asymptotically with $\left[ \psi_{11m}, \psi_{21m} \right]^{\sf T}$.
By multiplying this vector by $Q$,
we see that in $(x,y)$-coordinates these points align asymptotically with
$\left[ \psi_{11m}, \psi_{11m} \nu + \psi_{21m} \right]^{\sf T}$.
It is a straight-forward exercise to show that this vector is the unique eigenvector of $M_{\cX^{(m)}}$,
where $\cX^{(m)}$ denotes the $m^{\rm th}$ left shift permutation of $\cX$.
We now show that none of these vectors are tangent to the switching manifold.

\begin{lemma}
Suppose that (\ref{eq:f}) satisfies the conditions of Theorem \ref{th:codim4}.
Then for all $m = 0,\ldots,n_{\cX}-1$, we have $\psi_{11m} \ne 0$,
where $\psi_{11m}$ is defined by (\ref{eq:gXtruncated}).
\label{le:psi11}
\end{lemma}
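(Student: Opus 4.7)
The plan is to argue by contradiction, using the formulas (\ref{eq:wSkji}) and (\ref{eq:wSkj2}) already established. The case $m = 0$ is trivial: the composition in (\ref{eq:gXtruncated}) is an empty product, hence the identity, so $\psi_{110} = 1$. Fix $m \in \{1, \ldots, n_{\cX}-1\}$ and suppose for contradiction that $\psi_{11m} = 0$.

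My first step would be to establish that the companion entry $\psi_{12m}$ is nonzero. The matrix in (\ref{eq:gXtruncated}) equals $Q^{-1} \left( A_{\cX_{m-1}} \cdots A_{\cX_0} \right) Q$, whose determinant is $\prod_{i=0}^{m-1} \delta_{\cX_i}$. Since $M_{\cX}$ has a repeated unit eigenvalue, $\det M_{\cX} = \prod_{i=0}^{n_{\cX}-1} \delta_{\cX_i} = 1$, so no factor $\delta_{\cX_i}$ can vanish, and hence the $\psi$-matrix is invertible. Together with the assumption $\psi_{11m} = 0$, this forces $\psi_{12m} \ne 0$.

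Next, substituting $\psi_{11m} = 0$ into the first component of (\ref{eq:wSkji}) gives $u^{\cS[k]}_{j n_{\cX} + m} = \psi_{12m} v^{\cS[k]}_{j n_{\cX}} + \chi_{1m}$ for $j = 0, \ldots, k-1$. The second component of (\ref{eq:wSkj2}) is exact, so
\[
u^{\cS[k]}_{j n_{\cX} + m} \,=\, \psi_{12m} \rho_2 \, j \,+\, C(k) \;,
\]
where $C(k)$ is independent of $j$. The essential observation is that the quadratic-in-$j$ contribution that would ordinarily enter through $\psi_{11m} u^{\cS[k]}_{j n_{\cX}}$ has been erased, leaving a genuine arithmetic progression in $j$ with common difference $\psi_{12m} \rho_2$, which is nonzero by the previous step together with part (\ref{it:rho2}) of Theorem \ref{th:codim4}.

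The contradiction then comes at once. For every $j = 0, \ldots, k-1$ the symbol $\cS[k]_{j n_{\cX} + m}$ equals $\cX_m$, so admissibility forces all $k$ terms $u^{\cS[k]}_{j n_{\cX} + m}$ to share the sign of $\cX_m$ (recall that in the present $(u,v)$-coordinates the switching manifold is simply $u = 0$). But an arithmetic progression of $k$ terms with nonzero common difference $\psi_{12m} \rho_2$ spans an interval of width $(k-1) |\psi_{12m} \rho_2|$, which for large $k$ exceeds any bound and so must contain values of both signs. Since admissibility is assumed to hold for infinitely many $k$, this is the required contradiction. The only mildly delicate step is the verification that $\psi_{12m}$ cannot accidentally vanish, but this drops out at once from $\det M_{\cX} = 1$, so no real obstacle emerges in the argument.
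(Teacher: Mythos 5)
Your proposal is correct and takes essentially the same route as the paper: assume $\psi_{11m}=0$, deduce $\psi_{12m}\ne 0$ from nonsingularity of the matrix part of (\ref{eq:gXtruncated}), and combine (\ref{eq:wSkji}) with the exact second component of (\ref{eq:wSkj2}) to force a sign change among the points $u^{\cS[k]}_{j n_{\cX}+m}$, contradicting admissibility since $\cS[k]_{j n_{\cX}+m}=\cX_m$ for all $j$. One small wording caveat: the sign change does not follow from the width of the arithmetic progression alone (a wide progression could stay one-signed), but it does follow from your explicit formula, whose values at $j=0$ and $j=k-1$ are approximately $\mp \tfrac{1}{2}\psi_{12m}\rho_2 k$ and hence of opposite sign for large $k$ --- which is exactly the endpoint comparison the paper makes.
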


\begin{proof}
Suppose for a contradiction $\psi_{11m} = 0$.
Then by (\ref{eq:wSkji}), $u^{\cS[k]}_{j n_{\cX} + m} = \psi_{12m} v^{\cS[k]}_{j n_{\cX}} + \chi_{1m}$,
where $\psi_{12m} \ne 0$ (for otherwise the matrix part of (\ref{eq:gXtruncated})
would be singular, which is not possible because $M_{\cX}$ is nonsingular).
It follows that the signs of $u^{\cS[k]}_m$ and $u^{\cS[k]}_{(k-1) n_{\cX} + m}$ are different for large $k$,
because, by (\ref{eq:wSkj2}), the signs of $v^{\cS[k]}_0$ and $v^{\cS[k]}_{(k-1) n_{\cX}}$
are different for large $k$.
This contradicts the assumption that $\cS[k]$-cycles are admissible for arbitrarily large values of $k$,
because $\cS[k]_m = \cS[k]_{(k-1) n_{\cX} + m} = \cX_m$.
\end{proof}

Note that from (\ref{eq:wSkj2}) and (\ref{eq:wSkji}) we have
\begin{equation}
u^{\cS[k]}_{j n_{\cX} + m} = -\frac{\psi_{11m} \omega_{12} \rho_2}{2} j(k-j) + \cO(j,k) \;.
\label{eq:uSkjm}
\end{equation}
Therefore (\ref{eq:signomega12rho2}) may be generalized to:
\begin{equation}
{\rm if~} \cX_m = \sR, {\rm ~then~} \psi_{11m} \omega_{12} \rho_2 < 0 \;, \qquad
{\rm if~} \cX_m = \sL, {\rm ~then~} \psi_{11m} \omega_{12} \rho_2 > 0 \;.
\label{eq:signkappa11omega12rho2}
\end{equation}

\subsection{Examples of codimension-four points}
\label{sub:examples4}

Here we identify values of $\tau_{\sL}, \delta_{\sL}, \tau_{\sR}$ and $\delta_{\sR}$,
for which (\ref{eq:f}) has infinitely many stable $\cS[k]$-cycles
for three different combinations of $\cX$ and $\cY$.
To find such values, given $\cX$ and $\cY$,
we first use the assumption that $M_{\cX}$ has a repeated unit eigenvalue
to obtain two conditions on the parameter values.
We then construct $\Gamma = Q^{-1} M_{\cY} Q$ (the matrix part of $g^{\cY}$ (\ref{eq:gY})).
By Theorem \ref{th:codim4}, $\gamma_{21} = 0$ and $\gamma_{22} = -1$,
where $\gamma_{ij}$ denotes the $(i,j)$-element of $\Gamma$.
This gives us two further conditions on the parameter values.
Solving all four conditions simultaneously
produces parameter values for which (\ref{eq:f}) potentially has infinitely many stable $\cS[k]$-cycles.
For each example we can calculate the $\cS[k]$-cycles explicitly
to verify that they are stable and admissible for infinitely many values of $k$\removableFootnote{
Calculations for all three examples are done symbolically in {\sc goSymGenNewZero.m}.
}.

\subsubsection*{An example with $n_{\cX} = 3$}

In \cite{Si10}, by numerically computing Arnold tongues and simply looking at where they overlap,
it was found that with
\begin{equation}
\textstyle
\tau_{\sL} = \frac{34}{25} \cos \left( \frac{19 \pi}{25} \right) \approx -0.9914 \;, \quad
\delta_{\sL} = 0.4624 \;, \quad 
\tau_{\sR} = \frac{5}{2} \cos \left( \frac{27 \pi}{50} \right) \approx -0.3133 \;, \quad
\delta_{\sR} = 1.5625 \;, 
\label{eq:paramSi10}
\end{equation}
and $\mu = 1$, (\ref{eq:f}) has six attracting $\cS[k]$-cycles, where\removableFootnote{
In \cite{Si10} I have written $\sL \sR \sL \sL (\sR \sL \sL)^k$, which is wrong!
}
\begin{equation}
\cX = \sR^2 \sL \;, \qquad
\cY = \sL \sR \sL^2 \;.
\label{eq:XYA}
\end{equation}
This suggests that the combination (\ref{eq:XYA}) may give
infinite coexistence for a set of parameter values near (\ref{eq:paramSi10})\removableFootnote{
With $\cX = \sR \sL$ I haven't been able to find an example using intuitive choices for $\cY$.
}.
Straight-forward calculations for the matrices $A_{\sL}$ and $A_{\sR}$
reveal that with $\cX = \sR^2 \sL$,
$M_{\cX}$ has a repeated unit eigenvalue when\removableFootnote{
With (\ref{eq:tauDeltaEig11A}) we have
$\det \left( M_{\cS[k]} \right) = \delta_{\sR}^{-5}$,
and thus we must have $\delta_{\sR} \ge 1$ in order for $\cS[k]$-cycles to be stable.
Note, this observation is not required here.
}
\begin{equation}
\tau_{\sL} = -\frac{\tau_{\sR} \delta_{\sR}^3 +
2 \delta_R^2 + \tau_{\sR}}{\delta_{\sR}^2 \left( \delta_{\sR} - \tau_{\sR}^2 \right)} \;, \qquad
\delta_{\sL} = \frac{1}{\delta_{\sR}^2} \;.
\label{eq:tauDeltaEig11A}
\end{equation}
With (\ref{eq:tauDeltaEig11A}), $[1,\nu]^{\sf T}$ is an eigenvector of $M_{\cX}$
with $\nu = \frac{\delta_{\sR}-\tau_{\sR}^2}{\tau_{\sR}+\delta_{\sR}^2}$.
By evaluating $\Gamma = Q^{-1} M_{\cY} Q$, where $Q$ is given by (\ref{eq:Q}) and $\cY = \sL \sR \sL^2$, we obtain
\begin{align}
\gamma_{21} &= \frac{ \left( \delta_{\sR} - \tau_{\sR} + 1 \right)
\left( \delta_{\sR}^2 + \delta_{\sR} \tau_{\sR} - \delta_{\sR} + \tau_{\sR}^2 + \tau_{\sR} + 1 \right)}
{\delta_{\sR}^3 \left( \delta_{\sR}^2 + \tau_{\sR} \right)^2 \left( \delta_{\sR} - \tau_{\sR}^2 \right)^2} \nonumber \\
&\quad\times \left( \delta_{\sR}^6 \tau_{\sR}^2 + 2 \delta_{\sR}^5 \tau_{\sR} + 2 \delta_{\sR}^4 \tau_{\sR}^3 +
\delta_{\sR}^4 + 3 \delta_{\sR}^3 \tau_{\sR}^2 + 3 \delta_{\sR}^2 \tau_{\sR}^4 +
2 \delta_{\sR}^2 \tau_{\sR} - \delta_{\sR} \tau_{\sR}^6 + 2 \delta_{\sR} \tau_{\sR}^3 + \tau_{\sR}^2 \right) \;, \label{eq:gamma21A} \\
\gamma_{22} &= \frac{\delta_{\sR}^5 \tau_{\sR} + \delta_{\sR}^4 + 2 \delta_{\sR}^3 \tau_{\sR}^2 +
\delta_{\sR}^2 \tau_{\sR} + 3 \delta_{\sR} \tau_{\sR}^3 - \tau_{\sR}^5 + \tau_{\sR}^2}
{\delta_{\sR}^2 \left( \delta_{\sR}^2 + \tau_{\sR} \right)
\left( \tau_{\sR}^2 - \delta_{\sR} \right)} \;. \label{eq:gamma22A}
\end{align}
By plotting (\ref{eq:gamma21A}) and (\ref{eq:gamma22A}),
it is quickly seen that there are only two points, $\left( \tau_{\sR},\delta_{\sR} \right)$,
at which $\gamma_{21} = 0$ and $\gamma_{22} = -1$.
One of these points is $\left( \tau_{\sR},\delta_{\sR} \right) = \left( 1 + \sqrt{2},1 \right)$,
but at this point all $\cS[k]$-cycles are virtual (for both $\mu = 1$ and $\mu = -1$), so we do not consider it further.
The other point is $\left( \tau_{\sR},\delta_{\sR} \right) = \left( 1 - \sqrt{2},1 \right)$,
which with (\ref{eq:tauDeltaEig11A}) gives
\begin{equation}
\tau_{\sL} = -\sqrt{2} \;, \qquad
\delta_{\sL} = 1 \;, \qquad
\tau_{\sR} = 1-\sqrt{2} \;, \qquad
\delta_{\sR} = 1 \;.
\label{eq:paramA}
\end{equation}
Fig.~\ref{fig:coexistNeutralA} shows $\cS[k]$-cycles of (\ref{eq:f}) with (\ref{eq:paramA}) and $\mu = 1$.
In this figure the eigenspace of each $M_{\cX^{(m)}}$ is also shown.
As discussed in \S\ref{sub:structure4}, the $\cS[k]$-cycles
align with these eigenspaces with increasing values of $k$.

In Appendix \ref{app:verify3} we prove the following result stating that the
$\cS[k]$-cycles of Fig.~\ref{fig:coexistNeutralA} are admissible for all $k \ge 8$, and are stable
(but not attracting) because each $\cS[k]$-cycle has a repeated stability multiplier of $-1$.
The stability multipliers are straight-forward to compute.
Admissibility is verified by deriving an explicit expression for each point of an $\cS[k]$-cycle
and demonstrating that all points lie on the correct side of the switching manifold only when $k \ge 8$.

\begin{proposition}
Let $\cX = \sR^2 \sL$, $\cY = \sL \sR \sL^2$ and $\cS[k] = \cX^k \cY$.
Then the map (\ref{eq:f}) with (\ref{eq:paramA}) and $\mu = 1$
has a unique $\cS[k]$-cycle for all $k \ge 1$,
that is admissible and stable for all $k \ge 8$,
and is virtual otherwise.
\label{pr:verify3}
\end{proposition}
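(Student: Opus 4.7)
The proof reduces to verifying the conditions of Theorem \ref{th:codim4} for the parameters (\ref{eq:paramA}) and then checking admissibility point-by-point. The first step is to compute $M_{\cX}$ for $\cX = \sR \sR \sL$: its eigenvalues are both equal to $1$ and the one-dimensional eigenspace is spanned by $[1,\nu]^{\sf T}$ for an explicit $\nu$ coming from (\ref{eq:tauDeltaEig11A}). With $Q$ as in (\ref{eq:Q}), one then evaluates the maps $g^{\cX}$ in (\ref{eq:gX}) and $g^{\cY}$ in (\ref{eq:gY}) for $\cY = \sL \sR \sL \sL$, yielding explicit numerical values of $\omega_{12}, \rho_1, \rho_2$ and of $\gamma_{ij}, \sigma_1, \sigma_2$. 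These satisfy $\omega_{12} \ne 0$ and $\rho_2 \ne 0$, and by Theorem \ref{th:codim4} also $\gamma_{21} = 0$ and $\gamma_{22} = -1$; since $\delta_{\sL} = \delta_{\sR} = 1$, the same theorem gives $\gamma_{11} = -1$. Thus the matrix part of $g^{\cS[k]}$ is upper-triangular with both diagonal entries equal to $-1$ for every $k \ge 1$, which immediately gives uniqueness of the $\cS[k]$-cycle (no unit eigenvalue) and stability (both eigenvalues have modulus $1$), and precludes attraction.

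It remains to pin down admissibility. With $\gamma_{11} = \gamma_{22} = -1$ and every coefficient explicit, the $\cO(j,k)$ remainder in (\ref{eq:wSkj}) can be replaced by an explicit affine polynomial in $j$ and $k$, giving a closed form for $w^{\cS[k]}_{j n_{\cX}}$ valid for all $j = 0,\dots,k$. Next, one computes the explicit matrices and vectors $\psi_{ijm}, \chi_{im}$ in (\ref{eq:gXtruncated}) for $m = 0, 1, 2$, which together with (\ref{eq:wSkji}) yield closed forms for every $w^{\cS[k]}_{j n_{\cX} + m}$ with $m \in \{0, 1, 2\}$ and $j \in \{0,\dots,k-1\}$. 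Iterating $w^{\cS[k]}_{k n_{\cX}}$ three more times under the appropriate half-maps then gives explicit formulas for the four tail points $w^{\cS[k]}_{k n_{\cX} + m}$, $m = 0, 1, 2, 3$ corresponding to $\cY$.

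With all $3k + 4$ points expressed as explicit polynomials in $j$ and $k$, admissibility becomes a finite list of sign conditions, one per position of $\cS[k]$. By (\ref{eq:uSkjm}) and (\ref{eq:signkappa11omega12rho2}), the sign of $u^{\cS[k]}_{j n_{\cX} + m}$ is correct at the quadratic order once $1 \le j \le k-1$, so only the boundary iterations $j = 0$ and $j = k-1$, where the quadratic term $j(k-j)$ is zero or small, together with the four tail points, require a genuine check. Each such check reduces to a quadratic or linear inequality in $k$ with explicit coefficients. The main obstacle is purely computational bookkeeping: locating the position of $\cS[k]$ at which the sharpest of these inequalities arises, showing it holds precisely for $k \ge 8$, and confirming that every other sign condition is already satisfied by then. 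Once that critical position is identified, $k \ge 8$ yields admissibility everywhere, while the same position places a point on the wrong side of $u = 0$ for every $k \in \{1,\dots,7\}$, so those $\cS[k]$-cycles are virtual, as claimed.
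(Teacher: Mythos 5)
Your route is the paper's own: change to $(u,v)$-coordinates via (\ref{eq:uv}) with the explicit eigenvector (here $\nu=\sqrt{2}$), compute $g^{\sL}$, $g^{\sR}$, $g^{\cX}$, $g^{\cY}$ exactly, read off that the matrix part of $g^{\cS[k]}$ is triangular with both diagonal entries $-1$ (hence uniqueness and stability, but not attraction, for every $k\ge1$), write every point of the cycle in closed form as a polynomial in $j$ and $k$, and reduce admissibility to a finite list of sign conditions whose binding one flips at $k=8$. Two steps, however, need repair. First, you cannot obtain $\gamma_{21}=0$, $\gamma_{22}=-1$, $\gamma_{11}=-1$ ``by Theorem \ref{th:codim4}'': that theorem takes as hypothesis the existence of infinitely many admissible, stable $\cS[k]$-cycles, which is essentially what Proposition \ref{pr:verify3} asserts, so invoking it here is circular; the same applies to (\ref{eq:signkappa11omega12rho2}), which was derived under that admissibility assumption. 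These facts must instead be read off from the direct computation (as in the paper, where $g^{\cY}$ in (\ref{eq:gYverify}) is found to have triangular matrix part with diagonal $-1,-1$); your statement that explicit numerical values are computed makes this fixable, but as written the logical order is backwards.

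Second, your justification of the interior signs (``correct at the quadratic order once $1\le j\le k-1$'') is not right as stated: when $j$ or $k-j$ is of order $1$ the term $j(k-j)$ is only $\cO(k)$ and does not dominate the linear terms, and the claim must hold for every $k\ge 8$, not just asymptotically. The clean argument, and the one the paper uses, is that each $u^{\cS[k]}_{3j}$, $u^{\cS[k]}_{3j+1}$, $u^{\cS[k]}_{3j+2}$ is an exact quadratic in $j$ whose concavity is oriented favorably relative to the required sign, so its extremum over $j=0,\ldots,k-1$ occurs at an endpoint and the endpoint checks you propose genuinely suffice for all $k$. Finally, the ``computational bookkeeping'' you defer is the actual content of the proposition: one must exhibit the closed forms, identify the binding condition $u^{\cS[k]}_{3k}<0$ with $u^{\cS[k]}_{3k}=-\tfrac14\left(-4+3\sqrt{2}\right)k+\tfrac34\left(2-\sqrt{2}\right)$ as in (\ref{eq:wSk3kverify}), observe that it is positive for $k\le 7$ (so those cycles are virtual) and negative for $k\ge 8$, and check that the remaining inequalities in (\ref{eq:admissibilityverify}) hold for all $k\ge 8$. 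Without these computations the proposal is a correct plan but not yet a proof.
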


\begin{figure}[t!]
\begin{center}
\setlength{\unitlength}{1cm}
\begin{picture}(15,7.5)
\put(0,0){\includegraphics[height=7.5cm]{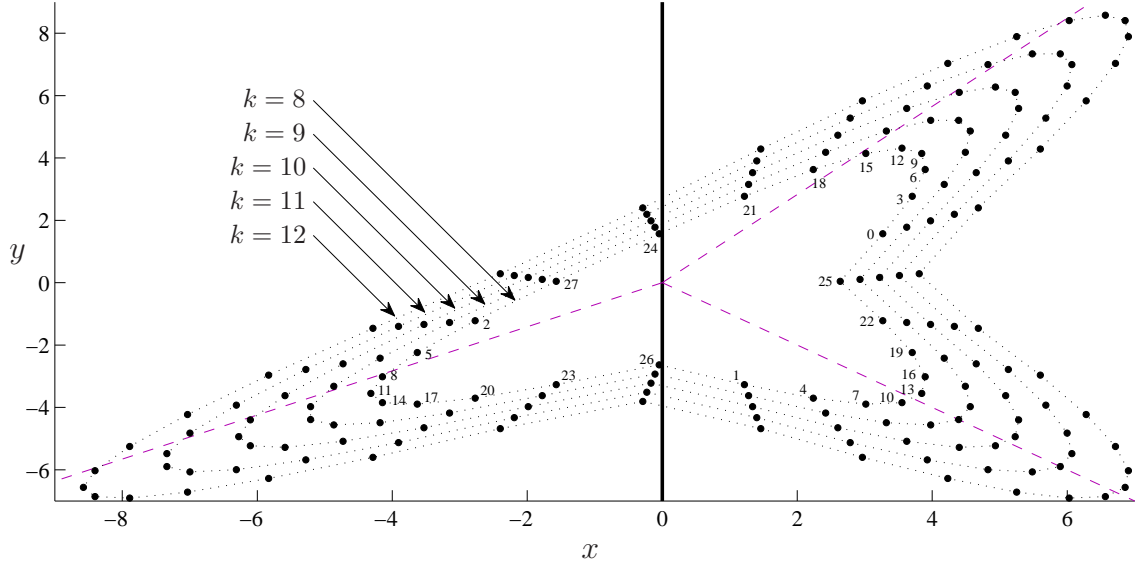}}
\put(7.6,.1){$x$}
\put(0,4.1){$y$}
\put(3.1,6.1){\footnotesize $k = 8$}
\put(3.1,5.65){\footnotesize $k = 9$}
\put(2.93,5.2){\footnotesize $k = 10$}
\put(2.93,4.75){\footnotesize $k = 11$}
\put(2.93,4.3){\footnotesize $k = 12$}
\end{picture}
\caption{
A phase portrait of the two-dimensional border-collision normal form (\ref{eq:f})
with $\mu = 1$ and remaining parameter values given by (\ref{eq:paramA}).
For all $k \ge 8$, there is a unique, stable, admissible $\cS[k]$-cycle,
where $\cS[k] = (\sR^2 \sL)^k \sL \sR \sL^2$.
The $\cS[k]$-cycles are nested and grow in size without bound.
For clarity, only the $\cS[k]$-cycles for $k = 8, \ldots, 12$ are shown.
In order to distinguish these five periodic solutions,
each point is connected by a dotted line segment to its third iterate under (\ref{eq:f}).
Each point of the $\cS[8]$-cycle (which has period $28$)
is numbered by its index $i$, which corresponds to the $i^{\rm th}$ symbol of $\cS[8]$
as well as the number of iterations of (\ref{eq:f}) from the $0^{\rm th}$-point.
The one-dimensional eigenspace of each $M_{\cX^{(m)}}$, for $m = 0,1,2$,
is shown as a dashed line on the side of the switching manifold corresponding to the symbol $\cX_m$.
(Here $\cX = \sR^2 \sL$, thus
$M_{\cX^{(0)}} \equiv M_{\cX} = A_{\sL} A_{\sR}^2$,
$M_{\cX^{(1)}} = A_{\sR} A_{\sL} A_{\sR}$, and
$M_{\cX^{(2)}} = A_{\sR}^2 A_{\sL}$.)
\label{fig:coexistNeutralA}
}
\end{center}
\end{figure}

\subsubsection*{An example with $n_{\cX} = 4$}

Replacing $\sR$ with $\sR^2$ in (\ref{eq:XYA}) produces
\begin{equation}
\cX = \sR^3 \sL \;, \qquad
\cY = \sL \sR^2 \sL^2 \;.
\label{eq:XYB}
\end{equation}
Here $M_{\cX}$ has a repeated unit eigenvalue when
\begin{equation}
\tau_{\sL} = \frac{2 + (\delta_{\sL} + \delta_{\sR})(\tau_{\sR}^2 - \delta_{\sR})}
{\tau_{\sR} (\tau_{\sR}^2 - 2 \delta_{\sR})} \;, \qquad
\delta_{\sL} = \frac{1}{\delta_{\sR}^3} \;.
\label{eq:tauDeltaEig11B}
\end{equation}
With (\ref{eq:tauDeltaEig11B}), $[1,\nu]^{\sf T}$ is an eigenvector of $M_{\cX}$
with $\nu = \frac{-\tau_{\sR}^3+2 \delta_{\sR} \tau_{\sR}}{\delta_{\sR}^3-\delta_{\sR}+\tau_{\sR}^2}$.
By evaluating $\Gamma = Q^{-1} M_{\cY} Q$
we obtain expressions for $\gamma_{21}$ and $\gamma_{22}$ in terms of $\tau_{\sR}$ and $\delta_{\sR}$
that are too complicated to include here\removableFootnote{
\begin{align}
\gamma_{21} &= \frac{(\delta_{\sR} + \tau_{\sR} + 1)
(\delta_{\sR} - \tau_{\sR} + 1)
(\delta_{\sR}^2 - 2 \delta_{\sR} + \tau_{\sR}^2 + 1)}
{\delta_{\sR}^4 \tau_{\sR}^2 (2 \delta_{\sR} - \tau_{\sR}^2)^2 (\delta_{\sR}^3 - \delta_{\sR} + \tau_{\sR}^2)^2} \nonumber \\
&\quad\times \Big( \delta_{\sR}^{10} - 2 \delta_{\sR}^9 \tau_{\sR}^2 + \delta_{\sR}^8 \tau_{\sR}^4 - 4 \delta_{\sR}^8 +
8 \delta_{\sR}^7 \tau_{\sR}^2 - 6 \delta_{\sR}^6 \tau_{\sR}^4 + 6 \delta_{\sR}^6 + 2 \delta_{\sR}^5 \tau_{\sR}^6 - 
16 \delta_{\sR}^5 \tau_{\sR}^2 \nonumber \\
&\quad+ 22 \delta_{\sR}^4 \tau_{\sR}^4 - 4 \delta_{\sR}^4 - 17 \delta_{\sR}^3 \tau_{\sR}^6 +
8 \delta_{\sR}^3 \tau_{\sR}^2 + 7 \delta_{\sR}^2 \tau_{\sR}^8 - 6 \delta_{\sR}^2 \tau_{\sR}^4 + \delta_{\sR}^2 -
\delta_{\sR} \tau_{\sR}^10 + 2 \delta_{\sR} \tau_{\sR}^6 - 2 \delta_{\sR} \tau_{\sR}^2 + \tau_{\sR}^4 \Big) \;, \\
\gamma_{22} &= \frac{\delta_{\sR}^8 - \delta_{\sR}^7 \tau_{\sR}^2 - 3 \delta_{\sR}^6 + 4 \delta_{\sR}^5 \tau_{\sR}^2
- 2 \delta_{\sR}^4 \tau_{\sR}^4 + 3 \delta_{\sR}^4 - 9 \delta_{\sR}^3 \tau_{\sR}^2 + 11 \delta_{\sR}^2 \tau_{\sR}^4
- \delta_{\sR}^2 - 6 \delta_{\sR} \tau_{\sR}^6 + 2 \delta_{\sR} \tau_{\sR}^2 + \tau_{\sR}^8 - \tau_{\sR}^4}
{\delta_{\sR}^3 \tau_{\sR} (- \tau_{\sR}^2 + 2 \delta_{\sR}) (\delta_{\sR}^3 - \delta_{\sR} + \tau_{\sR}^2)} \;.
\end{align}
}.
Numerical computations of these expressions indicate that
there is a unique choice of parameter values for which $\gamma_{21} = 0$, $\gamma_{22} = -1$,
and $\cS[k]$-cycles are admissible for large $k$.
To state these parameter values succinctly, we note that with (\ref{eq:tauDeltaEig11B}) and $\delta_{\sR} = 1$ we have
\begin{equation}
\gamma_{21} = \frac{(\tau_{\sR} - 2)(\tau_{\sR} + 2)
(\tau_{\sR}^4 - \tau_{\sR}^3 - 3 \tau_{\sR}^2 + 2)
(\tau_{\sR}^4 + \tau_{\sR}^3 - 3 \tau_{\sR}^2 + 2)}
{\tau_{\sR}^2 (\tau_{\sR}^2 - 2)^2} \;.
\label{eq:gamma21B}
\end{equation}
The parameter values are
\begin{equation}
\tau_{\sL} \approx -1.1629 \;, \qquad
\delta_{\sL} = 1 \;, \qquad
\tau_{\sR} \approx 0.7952 \;, \qquad
\delta_{\sR} = 1 \;,
\label{eq:paramB}
\end{equation}
where $\tau_{\sR}$ is a root of the quartic polynomial, $\tau_R^4-\tau_R^3-3 \tau_R^2 + 2$,
that appears in (\ref{eq:gamma21B}), and $\tau_{\sL} = \frac{2 \tau_{\sR}}{\tau_{\sR}^2 - 2}$, by (\ref{eq:tauDeltaEig11B}).
With (\ref{eq:paramB}) and $\mu = 1$, the $\cS[k]$-cycles are stable (each with a repeated stability multiplier of $-1$)
and admissible for all $k \ge 4$.
The following proposition formalizes this statement.
We omit a proof of this proposition as it may be achieved
by repeating the steps of the proof for the previous example.
The $\cS[k]$-cycles are shown Fig.~\ref{fig:coexistNeutralB}
where we can observe that the $\cS[k]$-cycles grow along the eigenspaces of the matrices $M_{\cX^{(m)}}$.

\begin{proposition}
Let $\cX = \sR^3 \sL$, $\cY = \sL \sR^2 \sL^2$ and $\cS[k] = \cX^k \cY$.
Then the map (\ref{eq:f}) with (\ref{eq:paramB}) and $\mu = 1$
has a unique $\cS[k]$-cycle for all $k \ge 1$, that is admissible and stable for all $k \ge 4$, and is virtual otherwise.
\label{pr:verify4}
\end{proposition}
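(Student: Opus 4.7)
The plan is to mimic the proof strategy sketched for Proposition \ref{pr:verify3}, adapted to the longer words $\cX = \sR^3 \sL$ (length $n_{\cX}=4$) and $\cY = \sL \sR^2 \sL^2$ (length $n_{\cY}=5$). First I would pass to the $(u,v)$-coordinates (\ref{eq:uv}) using $\nu = \frac{-\tau_{\sR}^3 + 2\delta_{\sR} \tau_{\sR}}{\delta_{\sR}^3 - \delta_{\sR} + \tau_{\sR}^2}$ evaluated at (\ref{eq:paramB}), with $\tau_{\sR}$ treated symbolically as a root of $p(t) = t^4 - t^3 - 3t^2 + 2$. A direct computation of $g^{\cY}$ from (\ref{eq:gY}) at these parameters will confirm that $\gamma_{21} = 0$ (this is how (\ref{eq:paramB}) was selected) and that $\omega_{12}, \rho_2 \ne 0$. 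Since $\delta_{\sL} = \delta_{\sR} = 1$, the last sentence of Theorem \ref{th:codim4} gives $\gamma_{11} = \gamma_{22} = -1$. The matrix part of $g^{\cS[k]}$ in (\ref{eq:gSk}) is then upper triangular with both diagonal entries equal to $-1$, so every $\cS[k]$-cycle is stable (with a repeated stability multiplier of $-1$) but not attracting. Uniqueness follows because neither eigenvalue of $M_{\cS[k]}$ equals $1$, so $I - M_{\cS[k]}$ is invertible for all $k \ge 1$.

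Next I would derive closed-form expressions for every point of the $\cS[k]$-cycle. Substituting $\gamma_{11} = \gamma_{22} = -1$ and $\gamma_{21} = 0$ into the formula for the fixed point of (\ref{eq:gSk}) gives $w^{\cS[k]}_0$ explicitly. Iterating under $g^{\cX}$ via (\ref{eq:gXk}) yields $w^{\cS[k]}_{j n_{\cX}}$ for $j = 0, \ldots, k$ exactly (not merely up to $\cO(j,k)$ as in (\ref{eq:wSkj2})). For the intermediate positions I would compute the four matrices and four vectors $\bigl(\psi_{ijm}, \chi_{im}\bigr)$ defined by (\ref{eq:gXtruncated}) for $m = 0,1,2,3$, and then use (\ref{eq:wSkji}) together with a separate forward iteration of $w^{\cS[k]}_{k n_{\cX}}$ under successive half-maps to cover the trailing $\cY$-block of length $5$. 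This produces, for each $i \in \{0, \ldots, 4k+4\}$, an explicit expression for $u^{\cS[k]}_i$ as a polynomial in $j$, $k$ and $\tau_{\sR}$, reduced modulo $p(\tau_{\sR})$.

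Admissibility then reduces to checking that $\mathrm{sign}\bigl(u^{\cS[k]}_i\bigr)$ matches $\cS[k]_i$ for each $i$. For each fixed $m \in \{0,1,2,3\}$, the expression $u^{\cS[k]}_{j n_{\cX} + m}$ is quadratic in $j$ with leading coefficient $-\tfrac{1}{2}\psi_{11m} \omega_{12} \rho_2$, which by (\ref{eq:signkappa11omega12rho2}) already has the sign required by $\cX_m$. Consequently the interior indices $1 \le j \le k-2$ are automatically admissible once $k$ is large enough, and the verification reduces to checking the boundary indices $j \in \{0, k-1\}$ within the $\cX^k$-block together with the five positions inside the trailing $\cY$-block. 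Each of these finitely many quantities is an affine function of $k$ whose coefficients are polynomials in $\tau_{\sR}$ modulo $p$, so the corresponding sign inequalities may be solved numerically using the minimal polynomial of $\tau_{\sR}$. The threshold at which all inequalities first hold simultaneously is $k = 4$, while for $k \in \{1,2,3\}$ at least one inequality fails and the algebraically-defined $\cS[k]$-cycle is virtual.

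The main obstacle will be the algebraic bookkeeping: the coefficients $\psi_{ijm}$ and $\chi_{im}$ are rational expressions in $\tau_{\sR}$ of non-trivial degree, and certifying the sharp threshold $k = 4$ rather than a coarser bound requires working with the exact algebraic value of $\tau_{\sR}$ rather than the truncation $0.7952$ in (\ref{eq:paramB}). In practice this is handled by a symbolic computation that reduces every expression modulo $p(\tau_{\sR})$ and bounds the resulting algebraic numbers using interval arithmetic, exactly as in the proof of Proposition \ref{pr:verify3} in Appendix \ref{app:verify3}.
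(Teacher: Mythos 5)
Your proposal follows essentially the same route as the paper: the paper omits the proof of Proposition \ref{pr:verify4} precisely because it is obtained by repeating the template of Appendix \ref{app:verify3} — transform to $(u,v)$-coordinates, compute every point of the $\cS[k]$-cycle in closed form, reduce the interior indices of the $\cX^k$-block to endpoint checks, and verify the finitely many remaining sign inequalities, each affine in $k$, which pin down the threshold $k \ge 4$.

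Two points in your write-up should be repaired, though neither changes the outcome. First, you cannot obtain $\gamma_{11} = \gamma_{22} = -1$ (nor the sign of $\psi_{11m}\omega_{12}\rho_2$ via (\ref{eq:signkappa11omega12rho2})) by citing Theorem \ref{th:codim4}: its hypotheses include the existence of infinitely many admissible stable $\cS[k]$-cycles, which is exactly what this proposition asserts, so the citation is circular. These facts must instead be outputs of the same direct computation you already plan — computing $Q^{-1} M_{\cY} Q$ at (\ref{eq:paramB}) gives $\gamma_{21} = 0$ and $\gamma_{22} = -1$ by construction, and $\gamma_{11} = -1$ then follows from $\det M_{\cY} = \delta_{\sL}^3 \delta_{\sR}^2 = 1$ together with $\gamma_{21} = 0$; likewise the signs of the $\psi_{11m}\omega_{12}\rho_2$ are simply read off from the computed coefficients. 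Second, the claim that the interior indices $1 \le j \le k-2$ are ``automatically admissible once $k$ is large enough'' is both loose (for $j$ near $1$ or $k-2$ the term $j(k-j) = \cO(k)$ is of the same order as the affine-in-$k$ remainder, so the leading coefficient alone decides nothing) and insufficient for the sharp threshold. The correct reduction, as in the proof of Proposition \ref{pr:verify3}, is that $u^{\cS[k]}_{j n_{\cX}+m}$ is a concave-down (resp.\ concave-up) quadratic in $j$ exactly when a positive (resp.\ negative) sign is required, so checking the sign at $j = 0$ and $j = k-1$ certifies it for every intermediate $j$ and every $k$, which is what lets you conclude admissibility for all $k \ge 4$ and failure for $k \le 3$ from the finitely many exact inequalities in $\tau_{\sR}$.
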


\begin{figure}[t!]
\begin{center}
\setlength{\unitlength}{1cm}
\begin{picture}(15,7.5)
\put(0,0){\includegraphics[height=7.5cm]{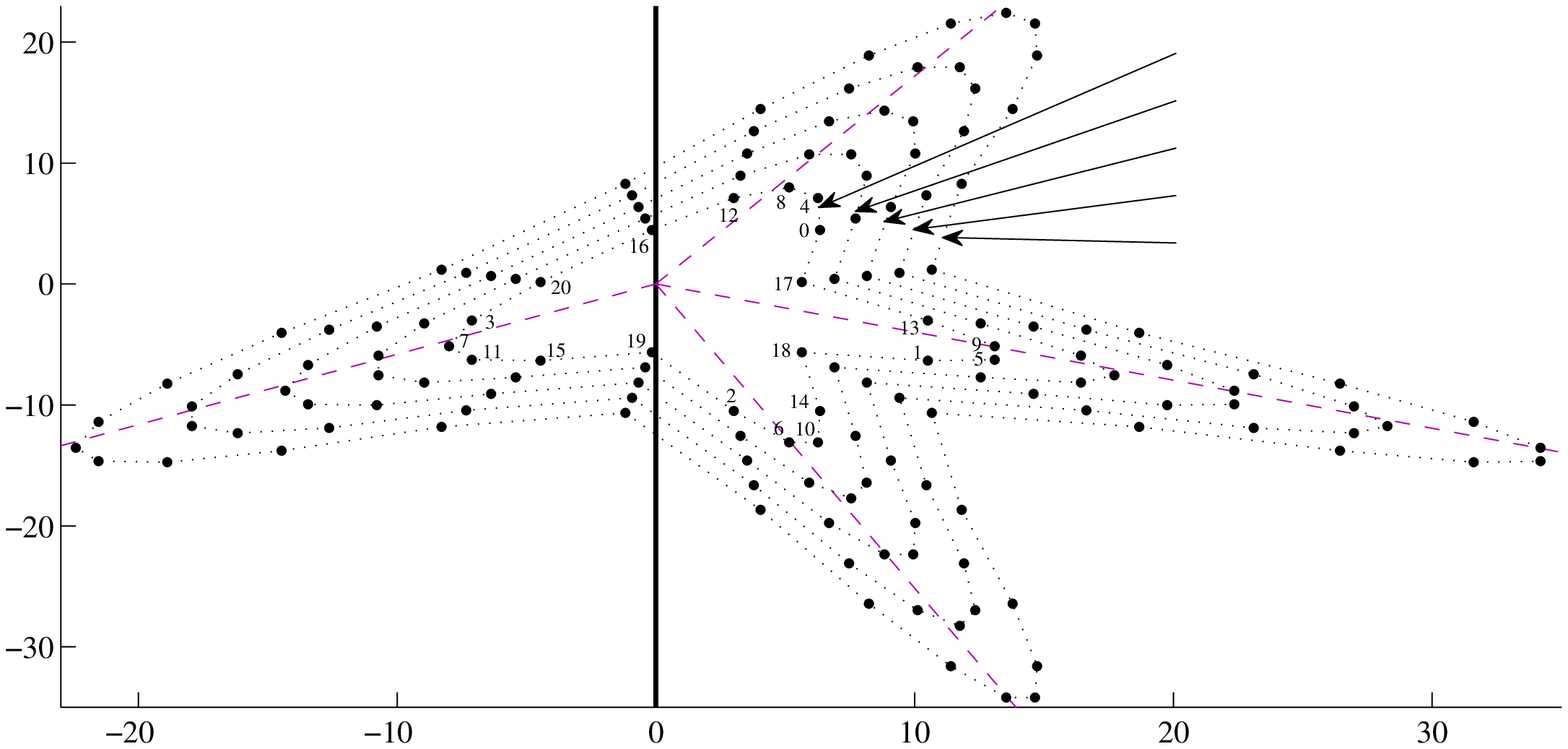}}
\put(7.6,.1){$x$}
\put(0,4.1){$y$}
\put(11.4,6.95){\footnotesize $k = 4$}
\put(11.4,6.5){\footnotesize $k = 5$}
\put(11.4,6.05){\footnotesize $k = 6$}
\put(11.4,5.6){\footnotesize $k = 7$}
\put(11.4,5.15){\footnotesize $k = 8$}
\end{picture}
\caption{
A phase portrait of (\ref{eq:f}) with (\ref{eq:paramB}) and $\mu = 1$.
For all $k \ge 4$, the $\cS[k]$-cycles are stable and admissible,
where $\cS[k] = (\sR^3 \sL)^k \sL^2 \sR^2 \sL$.
These are shown for $k = 4,\ldots,8$,
and for clarity each point is connected to its fourth iterate under (\ref{eq:f}) by a dotted line segment.
The $i^{\rm th}$ point of the $\cS[4]$-cycle is labeled by the index $i$.
The eigenspace of each $M_{\cX^{(m)}}$, for $m = 0,1,2,3$, where $\cX = \sR^3 \sL$,
is shown as a dashed line on the side of the switching manifold corresponding to the symbol $\cX_m$.
\label{fig:coexistNeutralB}
}
\end{center}
\end{figure}

\subsubsection*{An example with $n_{\cX} = 1$}

Lastly suppose\removableFootnote{
With $\sL$ and $\sR$ switched (so that $\cX_0 = \sR$ as usual),
I can only find infinite many admissible periodic orbits when $\mu < 0$.
}
\begin{equation}
\cX = \sL \;, \qquad
\cY = \sR^5 \;.
\label{eq:XYK}
\end{equation}
(Similar examples may be obtained by defining $\cY$ to be a different power of $\sR$.)
With (\ref{eq:XYK}), $M_{\cX} = A_{\sL}$,
therefore $M_{\cX}$ has a repeated unit eigenvalue when
\begin{equation}
\tau_{\sL} = 2 \;, \qquad
\delta_{\sL} = 1 \;.
\label{eq:tauDeltaEig11K}
\end{equation}
With (\ref{eq:tauDeltaEig11K}), $\nu = -1$ 
and $\Gamma = Q^{-1} M_{\cY} Q$ yields
\begin{align}
\gamma_{21} &= -\left( \delta_{\sR}^2 - 3 \delta_{\sR} \tau_{\sR}^2 + \tau_{\sR}^4 \right)
\left( \delta_{\sR} - \tau_{\sR} + 1 \right) \;, \nonumber \\
\gamma_{22} &= 2 \delta_{\sR}^2 \tau_{\sR} + \delta_{\sR}^2 - \delta_{\sR} \tau_{\sR}^3 -
3 \delta_{\sR} \tau_{\sR}^2 + \tau_{\sR}^4 \;. \nonumber
\end{align}
Through numerical computations we find that we have $\gamma_{21} = 0$ and $\gamma_{22} = -1$
only when $\left( \tau_{\sR},\delta_{\sR} \right) = \left( \frac{1 \pm \sqrt{5}}{2},1 \right)$.
With $\tau_{\sR} = \frac{1-\sqrt{5}}{2}$, $\cS[k]$-cycles are virtual for all $\mu \ne 0$.
With the second value of $\tau_{\sR}$, all together we have
\begin{equation}
\tau_{\sL} = 2 \;, \qquad
\delta_{\sL} = 1 \;, \qquad
\tau_{\sR} = \frac{1+\sqrt{5}}{2} \;, \qquad
\delta_{\sR} = 1 \;.
\label{eq:paramK}
\end{equation}
With also $\mu = 1$, $\cS[k]$-cycles are stable (with a repeated stability multiplier of $-1$)
and admissible for all $k \ge 11$, see Proposition \ref{pr:verify1}.
As with the previous example we omit a proof of the result
as it may be achieved in the same fashion as for Proposition \ref{pr:verify3}.
As shown in Fig.~\ref{fig:coexistNeutralK},
the $\cS[k]$-cycles align asymptotically with $[1,-1]^{\sf T}$.
This is the eigenvector of 
$M_{\cX} = A_{\sL} = \left[ \begin{array}{cc} 2 & 1 \\ -1 & 0 \end{array} \right]$.

\begin{proposition}
Let $\cX = \sL$, $\cY = \sR^5$ and $\cS[k] = \cX^k \cY$.
Then the map (\ref{eq:f}) with (\ref{eq:paramK}) and $\mu = 1$
has a unique $\cS[k]$-cycle for all $k \ge 1$,
that is admissible and stable for all $k \ge 11$,
and is virtual otherwise.
\label{pr:verify1}
\end{proposition}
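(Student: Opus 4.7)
The plan is to follow the template of the proof of Proposition~\ref{pr:verify3}, leveraging three structural simplifications peculiar to this example: with $\tau_{\sL}=2$ and $\delta_{\sL}=1$ one has $A_{\sL}=I+N$ with $N^{2}=0$; with $\tau_{\sR}=2\cos(\pi/5)$ and $\delta_{\sR}=1$ the matrix $A_{\sR}$ is conjugate to a rotation by $\pi/5$, so that $A_{\sR}^{5}=-I$; and $n_{\cX}=1$, so the $\cX$-phase of the cycle is indexed by the single parameter $j$. Throughout, I work in the $(u,v)$-coordinates of \S\ref{sub:codim4}, in which $\nu=-1$ and the switching manifold remains $u=0$.

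Using the nilpotent structure of $A_{\sL}-I$, I would first compute
\begin{equation*}
g^{\sL^{j}}(u,v)=\left(u+jv+\frac{j(j+1)}{2},\;v+j\right),
\end{equation*}
and using $A_{\sR}^{5}=-I$ together with a short calculation of the fixed point $p^{\sR}=(\phi^{2},-\phi^{2})$ of $f^{\sR}$ (where $\phi=(1+\sqrt{5})/2$), I would obtain
\begin{equation*}
g^{\cY}(u,v)=(-u+2\phi^{2},\;-v).
\end{equation*}
Composing yields $g^{\cS[k]}$ with matrix part $-A_{\sL}^{k}$ and characteristic polynomial $(\lambda+1)^{2}$, so $I-M_{\cS[k]}$ is nonsingular for every $k\ge 1$, the fixed point is unique, and both stability multipliers equal $-1$; hence the cycle is stable but not attracting, with equality in (\ref{eq:stabConditionPD}) and (\ref{eq:stabConditionNS}). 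Solving for the fixed point gives $(u_{0}^{\cS[k]},v_{0}^{\cS[k]})=(\phi^{2}-k/4,\,-k/2)$, and iterating under $g^{\sL^{j}}$ produces the closed form
\begin{equation*}
u_{j}^{\cS[k]}=\phi^{2}-\frac{k}{4}+\frac{j(j-k+1)}{2},\qquad j=0,1,\ldots,k.
\end{equation*}

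The main content is admissibility. In the $\sL$-phase ($j=0,\ldots,k-1$) I must verify $u_{j}^{\cS[k]}\le 0$. As a quadratic in $j$ this expression is strictly convex and symmetric under $j\mapsto k-1-j$, so its maximum over $\{0,1,\ldots,k-1\}$ is attained at both endpoints with common value $\phi^{2}-k/4$. The $\sL$-phase is therefore admissible (and has no point on the switching manifold) if and only if $\phi^{2}<k/4$, i.e.\ $k>4\phi^{2}=6+2\sqrt{5}\approx 10.472$, which for integer $k$ means $k\ge 11$; for $1\le k\le 10$ the point $x_{0}=\phi^{2}-k/4$ is positive while $\cS[k]_{0}=\sL$, so the cycle is virtual, which is exactly the threshold asserted.

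For the five $\sR$-phase points I would translate to $(x,y)$-coordinates centred at $p^{\sR}$, so that iteration becomes pure application of $A_{\sR}$ to the initial offset $(x_{k},y_{k})-p^{\sR}=(k/4)(1,1)^{\sf T}$. A short computation using $\phi^{2}=\phi+1$ shows that the first components of $A_{\sR}^{m}(1,1)^{\sf T}$ for $m=0,1,2,3,4$ are all strictly positive, so $x_{k+m}>\phi^{2}>0$ for every $k\ge 1$. The only substantive obstacle is the $\sL$-phase admissibility threshold, which collapses to the single inequality $\phi^{2}<k/4$; every other step is routine bookkeeping made clean by the period-$10$ rotational structure of $A_{\sR}$ and the single Jordan block of $A_{\sL}$.
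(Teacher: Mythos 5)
Your proposal is correct and takes essentially the same route as the paper's verification: an explicit closed-form computation of every point of the $\cS[k]$-cycle in the $(u,v)$-coordinates of \S\ref{sub:codim4}, with uniqueness and the repeated stability multiplier $-1$ read off from the matrix part $M_{\cS[k]} = A_{\sR}^5 A_{\sL}^k = -A_{\sL}^k$, and the admissibility threshold determined by the single binding inequality $u^{\cS[k]}_0 = \tfrac{3+\sqrt{5}}{2} - \tfrac{k}{4} < 0$, i.e.\ $k \ge 11$, exactly as in the paper. Your shortcuts (using $A_{\sR}^5 = -I$ and recentering at the fixed point of $f^{\sR}$ for the five $\sR$-phase points, and the endpoint maximum of the convex quadratic in $j$ for the $\sL$-phase) reproduce the same expressions the paper obtains by direct iteration, so they are organizational streamlining rather than a different argument.
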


\begin{figure}[t!]
\begin{center}
\setlength{\unitlength}{1cm}
\begin{picture}(15,7.5)
\put(0,0){\includegraphics[height=7.5cm]{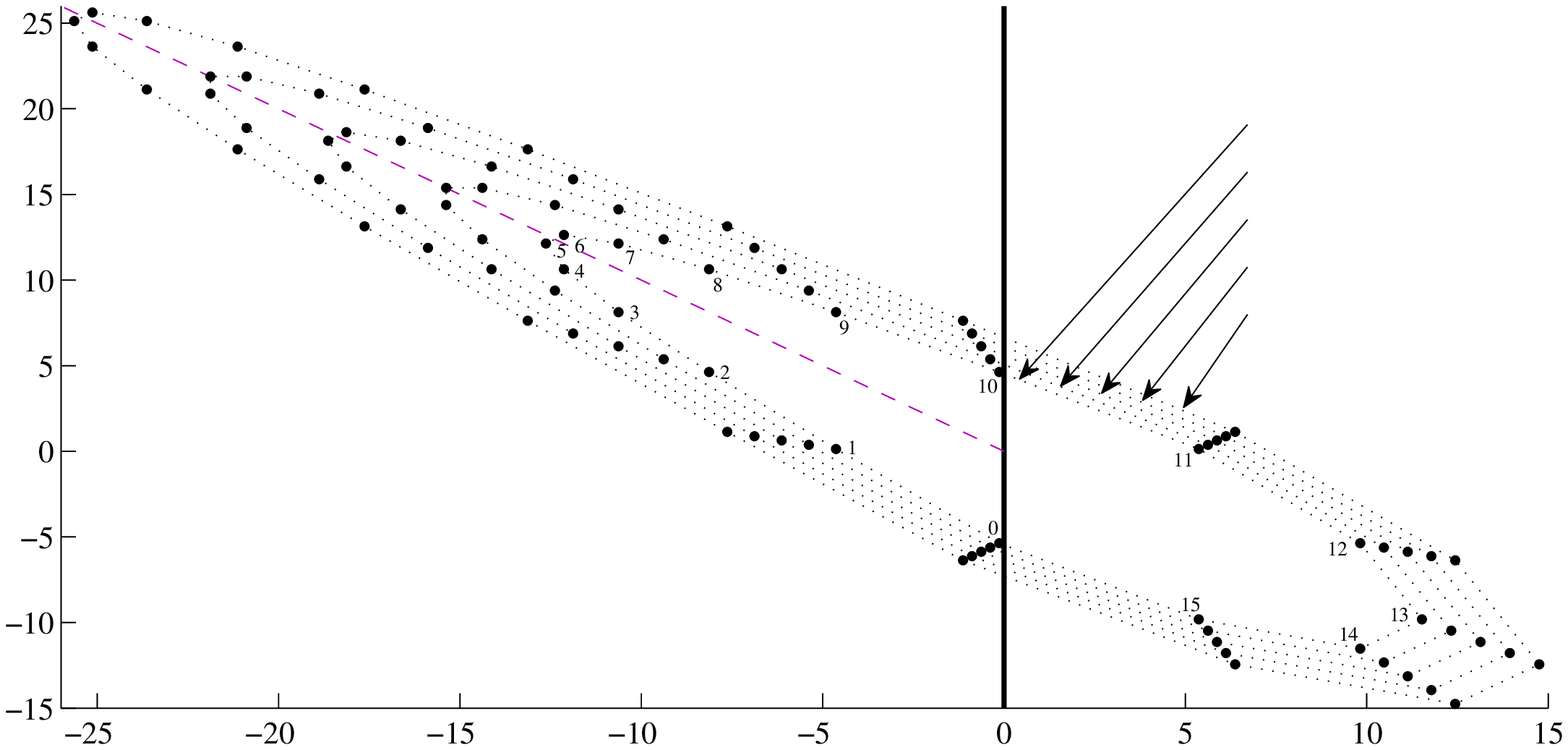}}
\put(7.6,.1){$x$}
\put(0,4.1){$y$}
\put(12.05,6.32){\footnotesize $k = 11$}
\put(12.05,5.87){\footnotesize $k = 12$}
\put(12.05,5.42){\footnotesize $k = 13$}
\put(12.05,4.97){\footnotesize $k = 14$}
\put(12.05,4.52){\footnotesize $k = 15$}
\end{picture}
\caption{
A phase portrait of (\ref{eq:f}) with (\ref{eq:paramK}) and $\mu = 1$.
With $\cS[k] = \sL^k \sR^5$, $\cS[k]$-cycles are stable and admissible for all $k \ge 11$, and are shown up to $k = 15$.
Each point is connected to its image under (\ref{eq:f}) by a dotted line segment.
The $i^{\rm th}$ point of the $\cS[11]$-cycle is labeled by the index $i$.
The dashed line is the eigenspace of $M_{\cX}$,
which for this example is simply the matrix $A_{\sL}$,
and is shown only on the left side of the switching manifold.
\label{fig:coexistNeutralK}
}
\end{center}
\end{figure}

\section{Perturbations from codimension-four points}
\label{sec:perturb4}

In \S\ref{sub:examples4} we identified three sets of parameter values
at which (\ref{eq:f}) has infinitely many stable $\cS[k]$-cycles in accordance with Theorem \ref{th:codim4}.
For each of these examples we have $\delta_{\sL} = \delta_{\sR} = 1$,
although this restriction is not an immediate consequence of Theorem \ref{th:codim4}\removableFootnote{
With silly choices for $\cY$
(where the words $\cX \cY$ and $\cY \cX$ differ in more than two symbols)
I can find examples
(with $\cX = \sL$)
for which $\gamma_{21} = 0$, $\gamma_{22} = -1$ and $\gamma_{11} \ne -1$,
but some points of $\cS[k]$-cycles are not admissible.
}.
With $\delta_{\sL} = \delta_{\sR} = 1$, (\ref{eq:f}) preserves area and orientation
and each $\cS[k]$-cycle has a repeated stability multiplier of $-1$. 

In this section we study perturbations of (\ref{eq:f})
from a general codimension-four point at which Theorem \ref{th:codim4} is satisfied
and at which $\delta_{\sL} = \delta_{\sR} = 1$ (with the exception that this assumption is not required in Lemma \ref{le:Xcycle}).
By assuming $\delta_{\sL} = \delta_{\sR} = 1$, we able to produce strong results.
It remains to determine if the assumptions of Theorem \ref{th:codim4}
can be satisfied at a point for which $\delta_{\sL}, \delta_{\sR} \ne 1$.

\subsection{Stability of periodic solutions for perturbed parameter values}
\label{sub:stability4}

Here we investigate the stability of $\cS[k]$-cycles.
The matrices $A_{\sL}(\ee)$ and $A_{\sR}(\ee)$ are smooth functions of $\ee$,
as is any product of these matrices.
Thus if $M_{\cX}(\ee)$ has a repeated unit eigenvalue when $\ee = 0$, we can write
\begin{align}
\det \left( M_{\cX}(\ee) \right) &= 1 + \alpha \ee + \cO \left( \ee^2 \right) \;, \label{eq:detMX} \\
{\rm trace} \left( M_{\cX}(\ee) \right) &= 2 + \beta \ee + \cO \left( \ee^2 \right) \;, \label{eq:traceMX}
\end{align}
for some constants $\alpha, \beta \in \mathbb{R}$.
These constants are determined by the direction in four-dimensional parameter space that
we head by varying $\ee$ from zero.
We then have the following theorem.

\begin{theorem}
Suppose $\mu \ne 0$ and that the remaining parameters of (\ref{eq:f}) vary smoothly with $\ee$.
Suppose that when $\ee = 0$ (\ref{eq:f}) satisfies the assumptions of Theorem \ref{th:codim4}
and $\delta_{\sL}(0) = \delta_{\sR}(0) = 1$.
Suppose $\alpha \ne 0$ and $\alpha \ne \beta$,
where $\alpha$ and $\beta$ are defined by (\ref{eq:detMX}) and (\ref{eq:traceMX}).
Then the following statements are equivalent:
\begin{enumerate}[label=\roman{*}),ref=\roman{*}]
\item
\label{it:alphaBeta}
$\beta < \alpha < 0$;
\item
\label{it:XAsyStable}
there exists $\ee^* > 0$, such that for all $0 < \ee \le \ee^*$,
the eigenvalues of $M_{\cX}(\ee)$ are complex-valued and have modulus less than $1$;
\item
\label{it:SkAsyStable}
there exists $k_{\rm min} \in \mathbb{Z}$ and $\Delta > 0$,
such that for all $k \ge k_{\rm min}$ and $0 < \ee \le \Delta k^{-2}$,
the eigenvalues of $M_{\cS[k]}(\ee)$ have modulus less than $1$\removableFootnote{
We have to be careful here as we cannot talk about the stability of a virtual periodic solution.
If the $\cS[k]$-cycle is admissible, then this implies it is attracting.
If the $\cS[k]$-cycle is virtual, then it
corresponds to an attracting periodic solution of $f^{\cS[k]}$.
}\removableFootnote{
I suspect that the $k$-dependent upper bound on $\ee$ is actually not necessary,
but I don't know how to prove it.
Certainly as $k \to \infty$,
$\det \left( M_{\cS[k]}(\ee) \right) + {\rm trace} \left( M_{\cS[k]}(\ee) \right) + 1 \to 1$.
}.
\end{enumerate}
Furthermore, with $\beta < \alpha < 0$ the eigenvalues of $M_{\cX}$ are
$\lambda(\ee) = r(\ee) {\rm e}^{\pm {\rm i} \theta(\ee)}$,
where
\begin{equation}
r(\ee) = 1 + \frac{\alpha}{2} \ee + \cO \left( \ee^2 \right) \;, \qquad
\theta(\ee) = \sqrt{\alpha - \beta} \,\ee^{\frac{1}{2}} + \cO(\ee) \;.
\label{eq:rtheta}
\end{equation}
\label{th:alphaBeta}
\end{theorem}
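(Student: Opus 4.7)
My plan is to establish (\ref{it:alphaBeta}) $\Leftrightarrow$ (\ref{it:XAsyStable}) together with the asymptotics (\ref{eq:rtheta}) directly from the characteristic polynomial of $M_{\cX}(\ee)$, and then close the cycle with (\ref{it:XAsyStable}) $\Rightarrow$ (\ref{it:SkAsyStable}) $\Rightarrow$ (\ref{it:alphaBeta}). For the first equivalence I would substitute (\ref{eq:detMX})--(\ref{eq:traceMX}) into $\lambda^2 - {\rm trace}(M_{\cX})\lambda + \det(M_{\cX}) = 0$, producing discriminant $4(\beta - \alpha)\ee + O(\ee^2)$. Since $\alpha \ne \beta$, this has a definite sign for small $\ee > 0$, so complex eigenvalues occur precisely when $\beta < \alpha$. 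Given complex eigenvalues, $|\lambda|^2 = \det(M_{\cX}(\ee)) = 1 + \alpha\ee + O(\ee^2)$, and $\alpha \ne 0$ makes $|\lambda| < 1$ equivalent to $\alpha < 0$. The expression for $r(\ee)$ is immediate from $r^2 = \det(M_{\cX})$, and $\theta(\ee)$ follows from $r\cos\theta = \tfrac{1}{2}{\rm trace}(M_{\cX}) = 1 + \tfrac{\beta}{2}\ee + O(\ee^2)$, giving $\cos\theta = 1 + \tfrac{\beta - \alpha}{2}\ee + O(\ee^2)$ and hence $\theta = \sqrt{\alpha - \beta}\,\ee^{1/2} + O(\ee)$.

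For (\ref{it:XAsyStable}) $\Rightarrow$ (\ref{it:SkAsyStable}) I would bypass the (necessarily degenerate) diagonalization of $M_{\cX}(\ee)$ by invoking Cayley--Hamilton to write
\begin{equation*}
M_{\cX}(\ee)^k = s_k\, M_{\cX}(\ee) - \det(M_{\cX}(\ee))\, s_{k-1}\, I, \qquad s_k := \frac{r^{k-1}\sin(k\theta)}{\sin\theta}.
\end{equation*}
Left-multiplying by $M_{\cY}(\ee)$ yields closed-form expressions $\det(M_{\cS[k]}) = \det(M_{\cY})\det(M_{\cX})^k$ and ${\rm trace}(M_{\cS[k]}) = s_k\,{\rm trace}(M_{\cY}M_{\cX}) - \det(M_{\cX})\,s_{k-1}\,{\rm trace}(M_{\cY})$. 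The hypothesis $\delta_{\sL}(0) = \delta_{\sR}(0) = 1$ forces $\det(M_{\cY}(0)) = \det(M_{\cX}(0)) = 1$, which combined with Theorem~\ref{th:codim4} gives ${\rm trace}(M_{\cY}(0)) = {\rm trace}(M_{\cY}(0)M_{\cX}(0)) = -2$. Restricting to $\ee \le \Delta k^{-2}$ keeps both $r^k = 1 + O(\Delta/k)$ and $k\theta \le \sqrt{(\alpha - \beta)\Delta} + O(1/k)$ bounded; the addition identity for $\sin(k\theta)$ collapses $s_k - s_{k-1}$ to $\cos((k-1)\theta) + O(\Delta/k)$, so the expressions above become ${\rm trace}(M_{\cS[k]}) = -2\cos((k-1)\theta) + O(\Delta/k)$ and $\det(M_{\cS[k]}) = 1 + k\alpha\ee + O(\Delta/k^2)$. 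The stability conditions (\ref{eq:stabConditionSN})--(\ref{eq:stabConditionNS}) then reduce to $T^2 < 4D$ and $D < 1$ for $T = {\rm trace}(M_{\cS[k]})$, $D = \det(M_{\cS[k]})$: the latter follows from $\alpha < 0$, while the former expands to $4\sin^2((k-1)\theta) > 4k|\alpha|\ee + O(\Delta/k)$, which I would verify by splitting into the regimes where $(k-1)\theta$ is bounded away from zero versus where it is small (in the latter $\sin^2((k-1)\theta) \sim k^2(\alpha - \beta)\ee$ dominates $k|\alpha|\ee$ for large $k$), taking $\Delta$ small enough that $\sqrt{(\alpha - \beta)\Delta} < \pi/2$ to keep $(k-1)\theta$ safely away from $\pi$.

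The reverse implication (\ref{it:SkAsyStable}) $\Rightarrow$ (\ref{it:alphaBeta}) I would prove by contrapositive using the same closed formulas. If $\alpha > 0$, setting $\ee = \Delta k^{-2}$ gives $\det(M_{\cS[k]}) = 1 + \alpha\Delta/k + o(1/k) > 1$ for large $k$, so some eigenvalue escapes the unit disk. If $\beta > \alpha$, then $M_{\cX}(\ee)$ has real eigenvalues $\lambda_{\pm} = 1 + \tfrac{\beta}{2}\ee \pm \sqrt{(\beta - \alpha)\ee + O(\ee^2)}$, the Cayley--Hamilton identity reads identically with $\sin, \cos$ replaced by their hyperbolic counterparts, and at $\ee = \Delta k^{-2}$ we get $\lambda_+^k \to e^{\sqrt{(\beta - \alpha)\Delta}}$; consequently ${\rm trace}(M_{\cS[k]}) \to -2\cosh(\sqrt{(\beta - \alpha)\Delta}) < -2$, violating $|{\rm trace}| < 1 + \det$ for $k$ large. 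The main obstacle throughout is the non-uniformity as $\ee \to 0$ with $k$ large: $M_{\cX}(0)$ is defective, so any eigenvector-based diagonalization of $M_{\cX}(\ee)$ has inverse scaling like $\ee^{-1/2}$. The Cayley--Hamilton representation sidesteps this, but the delicate point is that $s_k$ and $s_{k-1}$ individually grow like $k$ while only their difference $s_k - s_{k-1}$ is $O(1)$; the scale $\ee \le \Delta k^{-2}$ is precisely the balance that keeps $k\theta$, $k^2\ee$, and $r^k$ simultaneously bounded while leaving enough margin in $\sin^2((k-1)\theta)$ to dominate the $k|\alpha|\ee$ correction.
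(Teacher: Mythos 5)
Your proposal is correct in substance, and it differs from the paper's proof in its two technical pillars. For (\ref{it:alphaBeta})$\Leftrightarrow$(\ref{it:XAsyStable}) and (\ref{eq:rtheta}) you do essentially what the paper does. For (\ref{it:alphaBeta})$\Rightarrow$(\ref{it:SkAsyStable}), however, the paper conjugates $M_{\cX}(\ee)$ by the $\ee$-dependent matrix $\hat{Q}(\ee)$ into the scaled rotation (\ref{eq:Omegahat}) and computes $\det$ and trace of $\hat{\Gamma}(\ee)\hat{\Omega}^k(\ee)$ in Appendix \ref{sub:detTraceMSk}, arriving at (\ref{eq:detMSk})--(\ref{eq:traceMSk}); you instead generate $M_{\cX}^k(\ee)$ from Cayley--Hamilton, $M_{\cX}^k = s_k M_{\cX} - \det(M_{\cX})\,s_{k-1} I$, which sidesteps the ill-conditioned ($\ee^{-1/2}$-scaled) eigenbasis entirely and, using $\gamma_{11}=\gamma_{22}=-1$ (so ${\rm trace}(M_{\cY}(0)) = {\rm trace}(M_{\cY}(0)M_{\cX}(0)) = -2$, which is correct given the last part of Theorem \ref{th:codim4} and the triangular forms of $\Gamma$ and $\Omega$), yields the same leading behaviour ${\rm trace}(M_{\cS[k]}) \approx -2\cos(k\theta)$ and $\det(M_{\cS[k]}) \approx r^{2k}$. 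Verifying the sufficient pair $T^2<4D$, $D<1$ rather than (\ref{eq:stabConditionSN})--(\ref{eq:stabConditionNS}) is legitimate here, since near the degenerate point the eigenvalues of $M_{\cS[k]}(\ee)$ are indeed complex. For (\ref{it:SkAsyStable})$\Rightarrow$(\ref{it:alphaBeta}) the paper expands $\det$ and trace to first order in $\ee$ at fixed $k$, (\ref{eq:detMSk2})--(\ref{eq:traceMSk2}), and reads off necessity from the stability inequalities; your contrapositive along $\ee = \Delta k^{-2}$ (determinant exceeding $1$ when $\alpha>0$; hyperbolic trace tending to $-2\cosh\sqrt{(\beta-\alpha)\Delta} < -2$ when $\beta>\alpha$, violating (\ref{eq:stabConditionPD})) is equally valid, covers the full negation since $\alpha\ne 0$ and $\alpha\ne\beta$, and is arguably more transparent about \emph{how} stability fails.

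One bookkeeping point to repair when writing this out: the error terms should be kept $\ee$-dependent (of sizes $\cO(k\ee)$, $\cO(\ee)$, $\cO(k^2\ee^2)$) rather than frozen at their endpoint magnitudes $\cO(\Delta/k)$. Since $T^2 - 4D \to 0$ as $\ee \to 0^+$ at fixed $k$, an $\ee$-independent error of size $\Delta/k$ in your inequality $4\sin^2((k-1)\theta) > 4k|\alpha|\ee + \cO(\Delta/k)$ would overwhelm the left side for $\ee \ll \Delta k^{-3}$ and the claim would fail as stated. The errors genuinely are of the $\ee$-dependent form (each arises from a factor $r-1$, $r^k-1$, $\theta^2$, or an $\cO(\ee)$ perturbation of ${\rm trace}(M_{\cY})$, ${\rm trace}(M_{\cY}M_{\cX})$, $\det(M_{\cY})$, and $s_k, s_{k-1} = \cO(k)$), so $k^2(\alpha-\beta)\ee$ dominates $k|\alpha|\ee + \cO(k\ee) + \cO(k^2\ee^2)$ uniformly on $0<\ee\le\Delta k^{-2}$ once $k$ is large; this is the same delicacy the paper negotiates around (\ref{eq:stabConditionPD2}).
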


For a small perturbation of parameter values in any direction from a codimension-four point,
(\ref{eq:f}) has a large number of admissible $\cS[k]$-cycles.
Theorem \ref{th:alphaBeta} tells us that these $\cS[k]$-cycles will be attracting
exactly when we choose the direction of perturbation such that $\beta < \alpha < 0$.
Moreover, the same condition
indicates whether or not $M_{\cX}(\ee)$ has complex eigenvalues with modulus less than $1$.

\begin{proof}
The eigenvalues of $M_{\cX}(\ee)$ are
$\lambda(\ee) = \frac{1}{2} {\rm trace} \left( M_{\cX}(\ee) \right)
\pm {\rm i} \sqrt{\det \left( M_{\cX}(\ee) \right) - \frac{1}{4} {\rm trace} \left( M_{\cX}(\ee) \right)^2}$.
By substituting (\ref{eq:detMX}) and (\ref{eq:traceMX}) into this expression
and converting to polar form we obtain (\ref{eq:rtheta}).
Therefore, if $\beta < \alpha < 0$, then $r(\ee) < 1$ and $\theta(\ee) \in \mathbb{R}$
for all sufficiently small $\ee > 0$.
Conversely, if the eigenvalues of $M_{\cX}(\ee)$ are complex and have modulus less than $1$ for arbitrarily small $\ee > 0$,
then by (\ref{eq:rtheta}), since $\alpha \ne 0$ and $\alpha \ne \beta$, we must have $\beta < \alpha < 0$.
This verifies the equivalence of (\ref{it:alphaBeta}) and (\ref{it:XAsyStable}).


To verify that (\ref{it:alphaBeta}) implies (\ref{it:SkAsyStable}),
we use (\ref{eq:rtheta}) to show that, if $\beta < \alpha < 0$,
then for all $k \in \mathbb{Z}$\removableFootnote{
This form is implicitly implies that the error terms are independent of $k$.
},
\begin{align}
\det \left( M_{\cS[k]}(\ee) \right) &= r^{2k}(\ee) \left( 1 + \cO(\ee) \right) \;, \label{eq:detMSk} \\
{\rm trace} \left( M_{\cS[k]}(\ee) \right) &= -r^k(\ee) \left( 2 \cos(k\theta(\ee))
+ \phi \sin(k\theta(\ee)) \ee^{\frac{1}{2}} + \cO(\ee) \right) \;, \label{eq:traceMSk}
\end{align}
where $\phi$ is a real-valued constant whose value is not important here (see Appendix \ref{sub:detTraceMSk}).
Equations (\ref{eq:detMSk}) and (\ref{eq:traceMSk}) are derived in Appendix \ref{sub:detTraceMSk}
by using an $\ee$-dependent similarity transform
that enables us to express arbitrary powers of $M_{\cS[k]}$ explicitly.
Here we use (\ref{eq:detMSk}) and (\ref{eq:traceMSk}) to show that the inequalities
(\ref{eq:stabConditionSN})-(\ref{eq:stabConditionNS}) are satisfied strictly.
Since $\alpha < 0$, $r(\ee)$ decreases asymptotically linearly with $\ee$
and hence $\det \left( M_{\cS[k]}(\ee) \right) - 1 < 0$, for small $\ee$ and large $k$, i.e.~(\ref{eq:stabConditionNS}) holds strictly.
Also, given small $\Delta > 0$, for $0 < \ee \le \Delta k^{-2}$
we have $k \theta(\ee) = \cO \big( \Delta^{\frac{1}{2}} \big)$.
Thus ${\rm trace} \left( M_{\cS[k]}(\ee) \right) \approx -2$ and (\ref{eq:stabConditionSN}) holds strictly.
Lastly, we have
\begin{align}
& \det \left( M_{\cS[k]}(\ee) \right) + {\rm trace} \left( M_{\cS[k]}(\ee) \right) + 1 \nonumber \\
&= \left( r^k(\ee) - 1 \right)^2 +
r^k(\ee) \left\{ 2 - 2 \cos(k\theta(\ee)) - \phi \sin(k\theta(\ee)) \ee^{\frac{1}{2}} + \cO(\ee) \right\} \nonumber \\
&= \left( r^k(\ee) - 1 \right)^2 +
r^k(\ee) \left\{ k^2 \left( \alpha - \beta + \cO \left( k^{-1} \right)
+ \cO \big( \ee^{\frac{1}{2}} \big) \right) \ee
+ \cO \left( \Delta^2 \right) \right\} \;,
\label{eq:stabConditionPD2}
\end{align}
where we have substituted
$\cos(k\theta(\ee)) = 1 - k^2 \left( \frac{\alpha-\beta}{2} \ee
+ \cO \big( \ee^{\frac{3}{2}} \big) \right) + \cO \left( \Delta^2 \right)$, and
$\sin(k\theta(\ee)) = k \left( \sqrt{\alpha-\beta} \,\ee^{\frac{1}{2}} + \cO(\ee) \right) + \cO \big( \Delta^{\frac{3}{2}} \big)$.
Since $\beta < \alpha$, there exists $\ee^* > 0$ and $k_{\rm min} > 0$
such that for all $0 < \ee < \ee^*$ and $k \ge k_{\rm min}$
the $\cO(\ee)$ term inside the braces in (\ref{eq:stabConditionPD2}) is greater than,
say, $\frac{(\alpha-\beta)}{2} k^2 \ee$, which is positive.
This term is also $\cO(\Delta)$ and thus dominates the $\cO \left( \Delta^2 \right)$ error term.
Hence there exists $\Delta > 0$ such that (\ref{eq:stabConditionPD2}) is strictly positive
(i.e.~(\ref{eq:stabConditionPD}) holds strictly),
$k \ge k_{\rm min}$ and $0 < \ee \le \Delta k^{-2}$.
Therefore (\ref{eq:stabConditionSN})-(\ref{eq:stabConditionNS}) are all satisfied strictly
and the eigenvalues of $M_{\cS[k]}(\ee)$ have modulus less than 1.

It remains to verify that (\ref{it:SkAsyStable}) implies (\ref{it:alphaBeta}).
For any fixed value of $k$,
by expanding (\ref{eq:detMSk}) and (\ref{eq:traceMSk}) in $\ee$ through the use of (\ref{eq:rtheta}), we obtain
\begin{align}
\det \left( M_{\cS[k]}(\ee) \right) &= 1 + k \left( \alpha + \cO \left( k^{-1} \right) \right) \ee
+ \cO \left( \ee^2 \right) \;, \label{eq:detMSk2} \\
{\rm trace} \left( M_{\cS[k]}(\ee) \right) &= -2 + k^2 \left( \alpha - \beta + \cO \left( k^{-1} \right) \right) \ee
+ \cO \left( \ee^2 \right) \;. \label{eq:traceMSk2}
\end{align}
Even though (\ref{eq:detMSk}) and (\ref{eq:traceMSk}) were derived assuming $\beta < \alpha < 0$,
(\ref{eq:detMSk2}) and (\ref{eq:traceMSk2}) are valid for all $\alpha,\beta \in \mathbb{R}$
because, for any fixed $k$, $\det \left( M_{\cS[k]}(\ee) \right)$ and ${\rm trace} \left( M_{\cS[k]}(\ee) \right)$
are smooth functions of $\ee$, and therefore have unique Taylor expansions in $\ee$.
(Formulas (\ref{eq:detMSk2}) and (\ref{eq:traceMSk2})
can also derived directly by computing the Taylor expansion of $Q^{-1} M_{\cX}^k(\ee) Q$ to first order in $\ee$
without applying the $\ee$-dependent similarity transform used in Appendix \ref{sub:detTraceMSk}\removableFootnote{
See Appendix \ref{sub:aux1}.
}.)
We then have
\begin{align}
\det \left( M_{\cS[k]}(\ee) \right) - {\rm trace} \left( M_{\cS[k]}(\ee) \right) + 1 &=
4 + \cO(\ee) \;, \nonumber \\
\det \left( M_{\cS[k]}(\ee) \right) + {\rm trace} \left( M_{\cS[k]}(\ee) \right) + 1 &=
k^2 \left( \alpha - \beta + \cO \left( k^{-1} \right) \right) \ee + \cO \left( \ee^2 \right) \;,
\nonumber \\
\det \left( M_{\cS[k]}(\ee) \right) - 1 &=
k \left( \alpha + \cO \left( k^{-1} \right) \right) \ee + \cO \left( \ee^2 \right) \;, \nonumber
\end{align}
and therefore the inequalities (\ref{eq:stabConditionSN})-(\ref{eq:stabConditionNS})
hold strictly for arbitrarily large $k$ and small $\ee > 0$ only if $\beta < \alpha < 0$ (since $\alpha \ne 0$ and $\alpha \ne \beta$).
\end{proof}

\subsection{Admissibility of periodic solutions for perturbed parameter values}
\label{sub:admissibility4}

Here we study admissibility of the $\cS[k]$-cycles.
Lemma \ref{le:Xcycle} describes $\cX$-cycles for small $\ee > 0$.
Lemma \ref{le:upperBound} tells us that if $\ee \ge \frac{4 \pi^2}{\alpha-\beta} k^{-2}$,
then under reasonable assumptions the $\cS[k]$-cycle must be virtual.
Conversely Lemma \ref{le:lowerBound} tells us that there exists $\Delta > 0$,
such that if $\ee \le \Delta k^{-2}$, then under similar assumptions the $\cS[k]$-cycle is admissible.

At a codimension-four point associated with Theorem \ref{th:codim4},
the $\cX$-cycle does not exist, or is possibly non-unique, because $M_{\cX}$ has a unit eigenvalue.
The following result tells us that if $\alpha > \beta$
(as is necessary for the asymptotic stability of $\cS[k]$-cycles for $\ee > 0$),
then for small $\ee > 0$ the $\cX$-cycle is unique but {\em completely virtual},
meaning that every point of the $\cX$-cycle lies on the wrong side of the switching manifold for admissibility.

\begin{lemma}
Suppose $\mu \ne 0$ and that the remaining parameters of (\ref{eq:f}) vary smoothly with $\ee$.
Suppose that when $\ee = 0$ (\ref{eq:f}) satisfies the assumptions of Theorem \ref{th:codim4}.
Then if $\alpha > \beta$ [resp.~$\alpha < \beta$],
where $\alpha$ and $\beta$ are defined by (\ref{eq:detMX}) and (\ref{eq:traceMX}),
there exists $\ee^* > 0$ such that for all $0 < \ee \le \ee^*$,
the $\cX$-cycle is unique and completely virtual [resp.~unique and admissible].
\label{le:Xcycle}
\end{lemma}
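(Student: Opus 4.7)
The plan is to work in the $\ee$-independent $(u,v)$-coordinates of \S\ref{sub:codim4} (defined via the eigenstructure of $M_{\cX}$ at $\ee = 0$), in which $g^{\cX}(\cdot\,;0)$ takes the upper-triangular form (\ref{eq:gX}), and to track the unique fixed point of $g^{\cX}(\cdot\,;\ee)$ as $\ee$ is perturbed from $0$. Uniqueness is immediate from
\[
\det\bigl(I - M_{\cX}(\ee)\bigr) = 1 - {\rm trace}\bigl(M_{\cX}(\ee)\bigr) + \det\bigl(M_{\cX}(\ee)\bigr) = (\alpha-\beta)\ee + \cO\bigl(\ee^{2}\bigr),
\]
which is nonzero for small $\ee > 0$ under either hypothesis $\alpha > \beta$ or $\alpha < \beta$.

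Next I would locate the fixed point. Writing $g^{\cX}(w;\ee) = B(\ee)w + \bigl(\rho_{1}(\ee),\rho_{2}(\ee)\bigr)^{\sf T}$ with $B(0) = \left[ \begin{array}{cc} 1 & \omega_{12} \\ 0 & 1 \end{array} \right]$, a direct computation of the adjugate of $I - B(\ee)$ shows that its $(1,2)$-entry equals $\omega_{12} + \cO(\ee)$ while the other three entries are $\cO(\ee)$. Dividing by $\det(I-B(\ee)) = (\alpha-\beta)\ee + \cO(\ee^{2})$ gives
\[
u^{\cX}_{0}(\ee) = \frac{\omega_{12}\,\rho_{2}}{(\alpha-\beta)\,\ee} + \cO(1), \qquad v^{\cX}_{0}(\ee) = \cO(1),
\]
where $\omega_{12} \ne 0$ (one-dimensional eigenspace) and $\rho_{2} \ne 0$ by part (\ref{it:rho2}) of Theorem \ref{th:codim4}. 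Propagating via the smoothly $\ee$-dependent version of (\ref{eq:gXtruncated}) and using that only the $u$-component of $w^{\cX}_{0}(\ee)$ diverges then gives
\[
u^{\cX}_{m}(\ee) = \psi_{11m}(\ee)\,u^{\cX}_{0}(\ee) + \cO(1) \sim \frac{\psi_{11m}(0)\,\omega_{12}\,\rho_{2}}{(\alpha-\beta)\,\ee},
\]
for each $m = 0,\ldots,n_{\cX}-1$, with $\psi_{11m}(0) \ne 0$ by Lemma \ref{le:psi11}.

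The conclusion then follows from a sign comparison. Since the switching manifold in $(u,v)$-coordinates is simply $u = 0$, with $u$ and $x$ sharing signs, admissibility at index $m$ requires $u^{\cX}_{m} > 0$ when $\cX_{m} = \sR$ and $u^{\cX}_{m} < 0$ when $\cX_{m} = \sL$. Relation (\ref{eq:signkappa11omega12rho2}) is precisely the statement that $\psi_{11m}(0)\,\omega_{12}\,\rho_{2}$ has the \emph{opposite} sign to the one required for admissibility, and this holds independently of $m$. Thus when $\alpha > \beta$ the factor $1/(\alpha-\beta)$ is positive and every $u^{\cX}_{m}(\ee)$ lies on the wrong side of $u=0$ for all sufficiently small $\ee > 0$, yielding the completely virtual conclusion; when $\alpha < \beta$ the factor is negative and every point lies on the correct side, yielding admissibility. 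The main technical step is the Laurent expansion of $(I-B(\ee))^{-1}$: one must verify that the $\cO(\ee^{-1})$ divergence lands in the $(1,2)$-entry (driven by the $\omega_{12}$ in $B(0)$) and that the subleading terms, together with the $\cO(\ee)$ perturbations of the coefficients $\psi_{ijm}$ and $\chi_{jm}$, cannot corrupt the sign of $u^{\cX}_{m}(\ee)$ for small $\ee>0$.
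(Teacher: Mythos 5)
Your proposal is correct, and it arrives at exactly the asymptotic formula the paper uses, $u^{\cX}_m(\ee) = \frac{1}{\ee}\left(\frac{\psi_{11m}\,\omega_{12}\,\rho_2}{\alpha-\beta} + \cO(\ee)\right)$ (equation (\ref{eq:uXm})), followed by the same endgame: $\psi_{11m}\ne 0$ from Lemma \ref{le:psi11} and the sign dictionary (\ref{eq:signkappa11omega12rho2}), so that for small $\ee>0$ every point of the $\cX$-cycle sits on the inadmissible side of $u=0$ when $\alpha>\beta$ and on the admissible side when $\alpha<\beta$. The difference from the paper is in how (\ref{eq:uXm}) is derived. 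The paper (Appendix \ref{sub:Xcycle}) treats each $m$ separately: it forms the shifted maps $g^{\cX^{(m)}}$, passes to the $m$-dependent coordinates (\ref{eq:uvTilde}) in which their matrix parts are upper triangular at $\ee=0$, expands to first order in $\ee$ through the coefficients $\xi_{ijm}$, and identifies $\alpha-\beta = -\psi_{11m}^2\omega_{12}\xi_{21m}/\det(\Psi_m)$ via (\ref{eq:alphabeta}) before solving for the fixed point of each shifted map; this machinery is then reused in Appendix \ref{sub:lowerBound} for Lemma \ref{le:lowerBound}. You instead compute only the fixed point $w^{\cX}_0(\ee)$, once, in the fixed $(u,v)$-frame, using the adjugate of $I-B(\ee)$ together with $\det\left(I-M_{\cX}(\ee)\right) = 1 - {\rm trace}\left(M_{\cX}(\ee)\right) + \det\left(M_{\cX}(\ee)\right) = (\alpha-\beta)\ee + \cO\left(\ee^2\right)$ (which also settles uniqueness immediately), and then push it forward by the smoothly $\ee$-dependent partial compositions $\Psi_m(\ee)w + \chi_m(\ee)$, noting that only the $u$-component of $w^{\cX}_0(\ee)$ diverges like $\ee^{-1}$, so the leading coefficient of $u^{\cX}_m(\ee)$ is $\psi_{11m}(0)\omega_{12}\rho_2/(\alpha-\beta)$ while all corrections are $\cO(1)$ and cannot affect the sign for small $\ee$. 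This is a more economical and self-contained route to (\ref{eq:uXm}) for the purposes of this lemma (no $m$-dependent coordinate changes, no $\xi_{ijm}$), at the cost of not generating the auxiliary identities the paper exploits later; logically both arguments are sound and lead to the same conclusion.
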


\begin{proof}
We let $w^{\cX}_i(\ee) = \left( u^{\cX}_i(\ee),v^{\cX}_i(\ee) \right)$ denote the points of the $\cX$-cycle,
and let $w^{\cS[k]}_i(\ee) = \left( u^{\cS[k]}_i(\ee),v^{\cS[k]}_i(\ee) \right)$ denote the points of an $\cS[k]$-cycle,
in $(u,v)$-coordinates for $\ee > 0$.
By (\ref{eq:uSkjm}) we have
\begin{equation}
u^{\cS[k]}_{j n_{\cX} + m}(\ee) = -\frac{\psi_{11m} \omega_{12} \rho_2}{2} j(k-j) + \cO(j,k) + \cO(\ee) \;.
\label{eq:uSkjm2}
\end{equation}
In Appendix \ref{sub:Xcycle} we derive the formula
\begin{equation}
u^{\cX}_m(\ee) = \frac{1}{\ee}
\left( \frac{\psi_{11m} \omega_{12} \rho_2}{\alpha - \beta} + \cO(\ee) \right) \;.
\label{eq:uXm}
\end{equation}
Therefore if $\alpha > \beta$ the signs of $u^{\cS[k]}_{j n_{\cX} + m}$ and $u^{\cX}_m(\ee)$ are different\removableFootnote{
for (i) sufficiently small $\ee > 0$,
(ii) sufficiently large $k \in \mathbb{Z}$,
(iii) any $j = 0,\ldots,k-1$,
(iv) any $m = 0,\ldots,n_{\cX}-1$.
},
hence the point $w^{\cX}_m(\ee)$ is virtual.
Similarly if $\alpha < \beta$, each $w^{\cX}_m(\ee)$ is admissible.
\end{proof}

Given $k \in \mathbb{Z}$,
we now derive an upper bound on the largest value of $\ee$ for which the $\cS[k]$-cycle is admissible.
To do this we note from Theorem \ref{th:alphaBeta} that if $\beta < \alpha < 0$, as required for asymptotic stability,
then the eigenvalues of $M_{\cX}$ are complex-valued and so the map $f^{\cX}$ can be thought of as representing a rotation about its fixed point.
We combine this observation with the fact that the fixed point of $f^{\cX}$ is virtual,
by Lemma \ref{le:Xcycle}, to obtain an upper bound on the number of times
an orbit of (\ref{eq:f}) may consecutively follow the sequence $\cX$,
and hence an upper bound on value of $\ee$.

\begin{lemma}
Suppose $\mu \ne 0$ and that the remaining parameters of (\ref{eq:f}) vary smoothly with $\ee$.
Suppose that when $\ee = 0$ (\ref{eq:f}) satisfies the assumptions of Theorem \ref{th:codim4}
and $\delta_{\sL}(0) = \delta_{\sR}(0) = 1$.
Suppose $\beta < \alpha < 0$.
Then there exists $\ee^* > 0$ and $k_{\rm min} \in \mathbb{Z}$,
such that for all $k \ge k_{\rm min}$ and $\frac{4 \pi^2}{\alpha-\beta} k^{-2} \le \ee \le \ee^*$,
the $\cS[k]$-cycle is virtual.
\label{le:upperBound}
\end{lemma}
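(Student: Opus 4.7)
The plan is to combine the rotational structure of $M_{\cX}(\ee)$ provided by Theorem \ref{th:alphaBeta} with the virtuality of the $\cX$-cycle from Lemma \ref{le:Xcycle}: if the orbit of a would-be $\cS[k]$-cycle winds sufficiently far around the virtual fixed point, it must land on the wrong side of the switching manifold. By Theorem \ref{th:alphaBeta}, the hypothesis $\beta < \alpha < 0$ ensures, for small $\ee > 0$, that $M_{\cX}(\ee)$ has complex-conjugate eigenvalues $r(\ee)e^{\pm i\theta(\ee)}$ with $r(\ee) < 1$ and $\theta(\ee) = \sqrt{\alpha-\beta}\,\ee^{1/2} + \cO(\ee)$, and Lemma \ref{le:Xcycle} (applicable since $\alpha > \beta$) tells us that the $\cX$-cycle is unique and completely virtual. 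I will work in $(u,v)$-coordinates and assume without loss of generality that $\cX_0 = \sR$, so that $u^{\cX}_0(\ee) < 0$ and admissibility of the $\cS[k]$-cycle requires $u^{\cS[k]}_{j n_{\cX}}(\ee) > 0$ for every $j = 0, \ldots, k-1$.

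Next, I will expose the oscillatory form of the orbit relative to the fixed point. Setting $\tilde w_j = w^{\cS[k]}_{j n_{\cX}}(\ee) - w^{\cX}_0(\ee)$, one has $\tilde w_j = M_{\cX}(\ee)^j \tilde w_0$. By the Cayley-Hamilton theorem, the scalar sequence $(\tilde w_j)_u$ satisfies the recurrence $(\tilde w_{j+2})_u = {\rm trace}(M_{\cX}(\ee))(\tilde w_{j+1})_u - \det(M_{\cX}(\ee))(\tilde w_j)_u$, whose characteristic roots are $r(\ee)e^{\pm i\theta(\ee)}$. Hence
\begin{equation*}
(\tilde w_j)_u = A\,r(\ee)^j \cos\!\bigl(j\theta(\ee) - \phi\bigr)
\end{equation*}
for some $A \ge 0$ and $\phi$ depending on $\ee$ and $\tilde w_0$. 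If $A = 0$ then $(\tilde w_j)_u \equiv 0$, giving $u^{\cS[k]}_{j n_{\cX}}(\ee) \equiv u^{\cX}_0(\ee) < 0$ and immediate virtuality; otherwise admissibility forces $\cos(j\theta(\ee) - \phi)$ to retain constant positive sign across $j = 0, \ldots, k-1$.

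To close the argument, I will contradict this constancy when $\ee \ge \frac{4\pi^2}{\alpha-\beta}k^{-2}$. This hypothesis rearranges to $k\sqrt{\alpha-\beta}\,\ee^{1/2} \ge 2\pi$, so $k\theta(\ee) \ge 2\pi(1+\cO(\ee^{1/2}))$; by shrinking $\ee^*$ and enlarging $k_{\min}$ one may arrange both $\theta(\ee) < \pi/2$ and $(k-1)\theta(\ee) \ge \pi$. The positive intervals of $\cos$, those of the form $(-\pi/2+2n\pi, \pi/2+2n\pi)$, each have length $\pi$ and are separated by gaps of length $\pi$, so a step of size $\theta(\ee) < \pi$ cannot leap from one positive interval to the next. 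Hence if $\cos(j\theta(\ee) - \phi)$ were positive for every $j = 0, \ldots, k-1$, then the entire sequence $\{j\theta(\ee) - \phi : j = 0, \ldots, k-1\}$ would fit in a single interval of length less than $\pi$, contradicting $(k-1)\theta(\ee) \ge \pi$. Therefore some $j^* \in \{0, \ldots, k-1\}$ satisfies $\cos(j^*\theta(\ee) - \phi) \le 0$, whence $(\tilde w_{j^*})_u \le 0$ and $u^{\cS[k]}_{j^* n_{\cX}}(\ee) \le u^{\cX}_0(\ee) < 0$, so the $\cS[k]$-cycle is virtual.

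The main obstacle I anticipate is uniform error control: Theorem \ref{th:alphaBeta} yields $\theta(\ee)$ only asymptotically, and the crucial inequality $(k-1)\theta(\ee) \ge \pi$ must be extracted from $k\theta(\ee) \ge 2\pi(1+\cO(\ee^{1/2}))$ through a careful simultaneous choice of $\ee^*$ and $k_{\min}$ that applies to every $k \ge k_{\min}$ at once.
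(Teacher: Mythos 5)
Your proposal is correct and follows essentially the same route as the paper: virtuality of the $\cX$-cycle fixed point (Lemma \ref{le:Xcycle}) plus the complex eigenvalues $r(\ee)e^{\pm i\theta(\ee)}$ from Theorem \ref{th:alphaBeta} show that the consecutive $\cX$-iterates of an admissible cycle cannot remain in the half-plane once the accumulated rotation about the fixed point exceeds $\pi$, and the bound $\ee \ge \frac{4\pi^2}{\alpha-\beta}k^{-2}$ is exactly the paper's crude doubling of $\pi/\theta(\ee)$. Your Cayley--Hamilton/cosine computation is simply a more explicit rendering of the paper's informal ``rotating at least $180^\circ$ forces a crossing of $x=0$'' step.
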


\begin{proof}
For ease of explanation, suppose $\cX_0 = \sR$ (without loss of generality).
Then if the $\cS[k]$-cycle is admissible with no points on the switching manifold,
each point
$\left( x^{\cS[k]}_{j n_{\cX}} ,\; y^{\cS[k]}_{j n_{\cX}} \right)$,
for $j=0,\ldots,k-1$, lies in the right half-plane.
Also $\left( x^{\cS[k]}_{(j+1) n_{\cX}} ,\; y^{\cS[k]}_{(j+1) n_{\cX}} \right)
= f^{\cX} \left( x^{\cS[k]}_{j n_{\cX}} ,\; y^{\cS[k]}_{j n_{\cX}} \right)$, for $j=0,\ldots,k-1$.

By Lemma \ref{le:Xcycle}, the fixed point of $f^{\cX}$, $\left( x^{\cX}_0 ,\; y^{\cX}_0 \right)$,
lies in the left half-plane.
By Theorem \ref{th:alphaBeta}, the eigenvalues of $M_{\cX}(\ee)$
are $\lambda(\ee) = r(\ee) {\rm e}^{\pm {\rm i} \theta(\ee)}$
where $r(\ee)$ and $\theta(\ee)$ are given by (\ref{eq:rtheta}).
Therefore the image of a point under the map $f^{\cX}$ may be thought of as
a rotation about $\left( x^{\cX}_0 ,\; y^{\cX}_0 \right)$.
Regardless of the precise geometry of this rotation,
if we iterate any point in the right half-plane
a total of $\lceil \frac{\pi}{\theta(\ee)} \rceil$ times under $f^{\cX}$,
the resulting sequence of points will rotate at least $180^\circ$ about
$\left( x^{\cX}_0 ,\; y^{\cX}_0 \right)$,
and therefore must cross $x=0$.
Hence for all $k \ge \lceil \frac{\pi}{\theta(\ee)} \rceil + 1$, the $\cS[k]$-cycle cannot be admissible.
By crudely doubling the leading order approximation to $\frac{\pi}{\theta(\ee)}$,
as determined from (\ref{eq:rtheta}), we obtain the given upper bound on $k$\removableFootnote{
For sufficiently small $\ee > 0$, $\frac{2 \pi}{\theta(\ee)} > \lceil \frac{\pi}{\theta(\ee)} \rceil + 1$.
}.
\end{proof}

The following lemma gives a lower bound on the largest value of $\ee$
for which the $\cS[k]$-cycle is admissible\removableFootnote{
As a Corollary to this Lemma, when $\ee = 0$, $\cS[k]$-cycles are admissible for all $k \ge k_{\rm min}$.
Or do I know this already?
It is interesting that this is not necessary for the proof.
}.

\begin{lemma}
Suppose $\mu \ne 0$ and that the remaining parameters of (\ref{eq:f}) vary smoothly with $\ee$.
Suppose that when $\ee = 0$ (\ref{eq:f}) satisfies the assumptions of Theorem \ref{th:codim4},
$\delta_{\sL}(0) = \delta_{\sR}(0) = 1$, and
\begin{equation}
\min_i \left| x^{\cS[k]}_i(0) \right| \to \infty
{\rm ~as~} k \to \infty \;.
\label{eq:distanceToSwMan}
\end{equation}
Suppose $\beta < \alpha < 0$.
Then there exists $k_{\rm min} \in \mathbb{Z}$ and $\Delta > 0$, such that for all
$k \ge k_{\rm min}$ and $0 < \ee \le \Delta k^{-2}$,
the $\cS[k]$-cycle is admissible and attracting.
\label{le:lowerBound}
\end{lemma}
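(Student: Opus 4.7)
The attracting part of the conclusion is supplied by Theorem~\ref{th:alphaBeta}\,(\ref{it:SkAsyStable}): in the regime $0 < \ee \le \Delta k^{-2}$ the eigenvalues of $M_{\cS[k]}(\ee)$ lie strictly inside the unit disc, so once the $\cS[k]$-cycle is shown to be admissible (with no points on the switching manifold) these eigenvalues are its stability multipliers and the cycle is attracting. My plan focuses on admissibility and follows the broad architecture of the proof of Lemma~\ref{le:lowerBoundZ}: obtain a closed-form expression for $w^{\cS[k]}_i(\ee)$, establish a perturbation bound uniform in both $i$ and $k$, and then invoke the distance hypothesis (\ref{eq:distanceToSwMan}) to rule out sign flips on the switching manifold. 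The triangle argument used in the codimension-three proof is unavailable here because the $\cX$-cycle is completely virtual by Lemma~\ref{le:Xcycle}, so the perturbation bound must do all the work.

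The technical heart is a uniform-in-$j$ expression for $M_{\cX}^j(\ee)$ on the range $0 \le j \le k$ when $\ee \le \Delta k^{-2}$. I would re-use the $\ee$-dependent similarity transform of Appendix~\ref{sub:detTraceMSk}, under which $M_{\cX}(\ee)$ is conjugated to a diagonal matrix with entries $r(\ee)\,{\rm e}^{\pm {\rm i}\theta(\ee)}$ described by (\ref{eq:rtheta}). Because $\ee \le \Delta k^{-2}$, the total phase is bounded by $j\theta(\ee) \le k\theta(\ee) = \sqrt{(\alpha-\beta)\Delta} + \cO(\Delta)$ and the modulus satisfies $r(\ee)^j = 1 + \cO(\Delta)$; these two estimates combined yield $M_{\cX}^j(\ee) = M_{\cX}^j(0) + E_j(\ee)$ with $\| E_j(\ee) \| = \cO(\Delta)$ \emph{uniformly} in $j$. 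Substituting into the fixed-point equation for $g^{\cS[k]}(\ee)$ produces an explicit formula for $w^{\cS[k]}_0(\ee)$ generalising (\ref{eq:wSkj2}), and iterating by $g^{\cX}(\ee)$ gives $w^{\cS[k]}_{j n_{\cX}}(\ee) = w^{\cS[k]}_{j n_{\cX}}(0) + \cO(1)$ uniformly in $j \in \{0,\ldots,k\}$. The intermediate points inside each $\cX$-block are then controlled by the truncated map (\ref{eq:gXtruncated}) and the $n_{\cY}$ tail points by a final composition with $g^{\cY}(\ee)$, yielding $w^{\cS[k]}_i(\ee) = w^{\cS[k]}_i(0) + \cO(1)$ for every $i$.

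With this bound in hand, admissibility follows quickly: the coordinate change (\ref{eq:uv}) satisfies $u = x$, so hypothesis (\ref{eq:distanceToSwMan}) reads $\min_i |u^{\cS[k]}_i(0)| \to \infty$ as $k \to \infty$, and a uniform $\cO(1)$ perturbation cannot flip the sign of any $u^{\cS[k]}_i(\ee)$ once $k$ is sufficiently large. The main obstacle is precisely the uniform $\cO(\Delta)$ bound on $E_j(\ee)$: a step-by-step propagation of the $\cO(\ee)$ perturbation of $M_{\cX}$ accumulates to $\cO(k\ee)$ in the matrix entries and to $\cO(k^3 \ee) = \cO(k\Delta)$ when applied to the $\cO(k^2)$-sized cycle points, which diverges with $k$. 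The similarity-transform approach circumvents this by encoding the accumulated effect through the \emph{small} total phase $k\theta(\ee) = \cO\bigl(\sqrt{\Delta}\bigr)$ rather than through per-step propagation, and this is ultimately what makes the lemma hold.
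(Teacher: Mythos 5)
Your use of Theorem~\ref{th:alphaBeta}\,(\ref{it:SkAsyStable}) for the attracting part, and of the $\ee$-dependent rotation form of $M_{\cX}(\ee)$ with the observation that $k\theta(\ee) = \cO\bigl(\Delta^{\frac{1}{2}}\bigr)$, matches the paper. The gap is in the admissibility step: the claimed uniform bounds are false. Since $Q^{-1}M_{\cX}^j(0)Q = \left[\begin{smallmatrix}1 & \omega_{12} j\\ 0 & 1\end{smallmatrix}\right]$ grows linearly in $j$, conjugating $r^j(\ee)R(j\theta(\ee))$ back through $\hat{Q}(\ee)$ (whose inverse has entries of size $\ee^{-\frac{1}{2}}$) gives $M_{\cX}^j(\ee) - M_{\cX}^j(0)$ of size $\cO(\Delta\, j)$, not $\cO(\Delta)$, uniformly in $j \le k$; correspondingly the interior cycle points, which are of size $j(k-j) = \cO(k^2)$ by (\ref{eq:wSkj2}), are perturbed by $\cO(\Delta k^2)$, not $\cO(1)$. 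This matters because hypothesis (\ref{eq:distanceToSwMan}) only delivers $\min_i |u^{\cS[k]}_i(0)| = \cO(k)$ (the minimum is attained within $\cO(1)$ points of the block ends, where $u^{\cS[k]}_i(0)$ is affine in $k$), so a blanket perturbation bound of size $\cO(\Delta k^2)$ cannot rule out sign flips near $j = 1$ or $j = k-2$; "uniform perturbation plus distance $\to\infty$" does not close the argument as written.

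The paper avoids exactly this: it computes explicitly only the $2n_{\cX}+n_{\cY}$ points within a $k$-independent number of iterates of $w^{\cS[k]}_0$ and $w^{\cS[k]}_{(k-1)n_{\cX}}$, obtaining $u^{\cS[k]}_i(\ee) = k\bigl(\eta_i + \cO(\Delta) + \cO(k^{-1})\bigr)$ with $\eta_i \ne 0$ forced by (\ref{eq:distanceToSwMan}), and then handles the interior indices $j n_{\cX}+m$, $j = 1,\ldots,k-2$, structurally rather than perturbatively: in the $m$-dependent coordinates of Appendix~\ref{app:mDependent} the second difference (\ref{eq:concavityu}) shows $j \mapsto \tilde{u}^{\cS[k]}_{j n_{\cX}+m}$ is concave down (and $\tilde v$ monotone) inside a strip $\Sigma_m$, so with both block-end values of the correct sign, a continuity argument on $U_m(\ee) = \min_j \tilde{u}^{\cS[k]}_{j n_{\cX}+m}$ prevents any interior point from reaching the switching manifold as $\ee$ increases to $\Delta k^{-2}$ (Appendix~\ref{sub:lowerBound}). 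To repair your proof you would either need to reproduce this concavity-plus-continuity argument, or replace your uniform bound by a $j$-resolved estimate showing the perturbation is pointwise small relative to $|u^{\cS[k]}_{j n_{\cX}+m}(0)|$ (i.e., of the form $\cO(\Delta)\, j(k-j) + \cO(\Delta k)$ against a leading term bounded below by a constant times $j(k-j)$ plus the nonvanishing linear endpoint behavior), which is substantially more delicate than the bound you asserted.
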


As $k \to \infty$, $\cS[k]$-cycles grow in size without bound.
Equation (\ref{eq:distanceToSwMan}) states that the distance of
the $\cS[k]$-cycles from the switching manifold
also grows in size without bound, which is the case for each of the examples in \S\ref{sub:examples4} as evident from the figures.

\begin{proof}
Let $w^{\cS[k]}_i(\ee) = \left( u^{\cS[k]}_i(\ee),v^{\cS[k]}_i(\ee) \right)$
denote the points of an $\cS[k]$-cycle in $(u,v)$-coordinates.
Here we show that there exists $k_{\rm min} \in \mathbb{Z}$ and $\Delta > 0$
such that, for each $i = 0,\ldots,k n_{\cX} + n_{\cY} - 1$,
the sign of $u^{\cS[k]}_i(\ee)$ is constant for all
$k \ge k_{\rm min}$ and $0 < \ee \le \Delta k^{-2}$.
This verifies admissibility because in $(u,v)$-coordinates the switching manifold is $u=0$,
and $\cS[k]$-cycles are assumed to be admissible when $\ee = 0$.
The asymptotic stability of the $\cS[k]$-cycles then follows immediately from Theorem \ref{th:alphaBeta}.

To calculate $w^{\cS[k]}_i(\ee)$ for small $\ee > 0$ and large $k \in \mathbb{Z}$,
we compute powers of the map $f^{\cX}$.
For this reason it is convenient to perform an $\ee$-dependent coordinate change such that the matrix $M_{\cX}$
is transformed to
$r(\ee) \left[ \begin{array}{cc}
\cos(\theta(\ee)) & \sin(\theta(\ee)) \\
-\sin(\theta(\ee)) & \cos(\theta(\ee))
\end{array} \right]$.
This is achieved in Appendix \ref{sub:Omegahat}, where we obtain the formulas
\begin{equation}
\left[ \begin{array}{c} u^{\cS[k]}_0(\ee) \\ v^{\cS[k]}_0(\ee) \end{array} \right]
= \left[ \begin{array}{c}
k \left( -\frac{\rho_1}{2} + \frac{ \left( \gamma_{12} + \omega_{12} \right) \rho_2}{4}
- \frac{\omega_{12} \sigma_2}{4} + \cO(\Delta)
+ \cO \left( k^{-1} \right) \right) \\
k \left( -\frac{\rho_2}{2}
+ \cO(\Delta) + \cO \left( k^{-1} \right) \right)
\end{array} \right] \;,
\label{eq:uv0}
\end{equation}
and
\begin{equation}
\left[ \begin{array}{c}
u^{\cS[k]}_{(k-1) n_{\cX}}(\ee) \\
v^{\cS[k]}_{(k-1) n_{\cX}}(\ee)
\end{array} \right] =
\left[ \begin{array}{c}
k \left( \frac{\rho_1}{2} + \frac{\left( \gamma_{12} - 3 \omega_{12} \right) \rho_2}{4}
+ \frac{\omega_{12} \sigma_2}{4}
+ \cO(\Delta) + \cO \left( k^{-1} \right) \right) \\
k \left( \frac{\rho_2}{2}
+ \cO(\Delta) + \cO \left( k^{-1} \right) \right)
\end{array} \right] \;.
\label{eq:uvk1nX}
\end{equation}
The key facet of the derivation of (\ref{eq:uv0}) and (\ref{eq:uvk1nX})
is that the error terms involve powers of $k \theta(\ee)$.
Therefore with $\ee \propto \Delta k^{-2}$,
since $\theta(\ee) = \cO \big( \ee^{\frac{1}{2}} \big)$,
the error terms can be expressed as powers of $\Delta^{\frac{1}{2}}$
(and the leading order error term is $\cO(\Delta)$).

We can obtain useful expressions for some additional points $w^{\cS[k]}_i(\ee)$
by iterating (\ref{eq:uv0}) and (\ref{eq:uvk1nX}) under $g^{\sL}$ and $g^{\sR}$,
in the appropriate order.
This is because iterations produce formulas of the same general form
as long as the number of iterations is small, specifically independent of $k$.
This approach was used above in the proof of Lemma \ref{le:lowerBoundZ}.

By iterating (\ref{eq:uv0}) $n_{\cX} - 1$ times, we obtain
\begin{equation}
u^{\cS[k]}_i(\ee) = k \left( \eta_i + \cO(\Delta)
+ \cO \left( k^{-1} \right) \right) \;,
\label{eq:uSki}
\end{equation}
for $i = 0,\ldots,n_{\cX}-1$, and some constants $\eta_i$.
Similarly by iterating (\ref{eq:uvk1nX}) $n_{\cX} + n_{\cY} - 1$ times,
equation (\ref{eq:uSki}) is valid for $i = (k-1) n_{\cX},\ldots,k n_{\cX} + n_{\cY} - 1$,
and some constants $\eta_i$.
The assumption (\ref{eq:distanceToSwMan}) implies that $\eta_i \ne 0$
for each of the $2 n_{\cX} + n_{\cY}$ given values of $i$,
because $u^{\cS[k]}_i(0)$ is an affine function of $k$
and the coefficient of its linear part is $\eta_i$.
By (\ref{eq:uSki}), there exists $k_{\rm min} \in \mathbb{Z}$ and $\Delta > 0$
such that, for each $i = 0,\ldots,n_{\cX}-1$ and $i = (k-1) n_{\cX},\ldots,k n_{\cX} + n_{\cY} - 1$,
the sign of $u^{\cS[k]}_i(\ee)$ is constant for all $k \ge k_{\rm min}$ and $0 < \ee \le \Delta k^{-2}$.
To complete the proof, in Appendix \ref{sub:lowerBound} we show that this is also true
for all $i = 0,\ldots,k n_{\cX} + n_{\cY}-1$.
\end{proof}

\subsection{A scaling law for the number of attracting periodic solutions}
\label{sub:scaling4}

As in \S\ref{sub:scaling3},
we let $\kappa(\ee)$ denote the number of $\cS[k]$-cycles that are admissible and attracting,
and let $\ee_K$ denote the supremum value of $\ee > 0$
for which there are $K$ admissible, attracting $\cS[k]$-cycles.
Lemmas \ref{le:upperBound} and \ref{le:lowerBound} lead to the following result
which may be proved in the same fashion as for Theorem \ref{th:scaling3}.

\begin{theorem}
Suppose $\mu \ne 0$ and that the remaining parameters of (\ref{eq:f}) vary smoothly with $\ee$.
Suppose that when $\ee = 0$ (\ref{eq:f}) satisfies the assumptions of Theorem \ref{th:codim4},
$\delta_{\sL}(0) = \delta_{\sR}(0) = 1$, $\beta < \alpha < 0$, and (\ref{eq:distanceToSwMan}) holds.
Then as $K \to \infty$,
\begin{equation}
\ee_K \sim \varphi(K) K^{-2} \;,
\label{eq:scalingLaw}
\end{equation}
for a function $\varphi(K)$ that is bounded between positive constants\removableFootnote{
I think it is too hard to prove that $\varphi(K)$ is a constant
because I would have to carefully analyze the asymptotic behavior of $u^{\cS[k]}_i(\ee)$
without assuming that $k^2 \ee$ is small
which is given by a complicated nonlinear function.
}.
\label{th:scaling4}
\end{theorem}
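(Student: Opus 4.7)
The plan is to mirror the proof of Theorem \ref{th:scaling3}, replacing the geometric bound $\lambda_2^{-k}(0)$ by the algebraic bound $k^{-2}$ supplied by Lemmas \ref{le:upperBound} and \ref{le:lowerBound}. The lemmas do all of the analytic work; what remains is a clean counting argument that converts per-cycle estimates into bounds on $\ee_K$.

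Fix $\ee^* > 0$ small enough that both lemmas apply throughout $0 < \ee \le \ee^*$, and set $\hat{\varphi} = \frac{4\pi^2}{\alpha - \beta} > 0$, where positivity uses $\beta < \alpha$. Let $k_{\rm min}$ be the maximum of the threshold indices given by the two lemmas, and let $\Delta > 0$ be as in Lemma \ref{le:lowerBound}. Writing $\ee^\star_k$ for the supremum of those $\ee \in (0,\ee^*]$ at which the $\cS[k]$-cycle is admissible and attracting, the two lemmas together yield
\begin{equation}
\Delta k^{-2} \;\le\; \ee^\star_k \;\le\; \hat{\varphi} k^{-2}, \qquad k \ge k_{\rm min}.
\nonumber
\end{equation}

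I would then turn these per-cycle estimates into bounds on $\ee_K$. For the upper bound, suppose $K \ge k_{\rm min}$ and $\ee > \hat{\varphi} K^{-2}$. Then $\ee > \hat{\varphi} k^{-2}$ for every $k \ge K$, so by Lemma \ref{le:upperBound} each such $\cS[k]$-cycle is virtual; only the $K-1$ indices $k = 1,\ldots,K-1$ can contribute to $\kappa(\ee)$, giving $\kappa(\ee) \le K - 1$, and therefore $\ee_K \le \hat{\varphi} K^{-2}$. For the lower bound, suppose $\ee \le \Delta (K + k_{\rm min})^{-2}$. Then for each of the $K+1$ values $k = k_{\rm min}, k_{\rm min} + 1, \ldots, K + k_{\rm min}$ we have $\ee \le \Delta k^{-2}$, so by Lemma \ref{le:lowerBound} every such $\cS[k]$-cycle is admissible and attracting; hence $\kappa(\ee) \ge K + 1$, forcing $\ee_K \ge \Delta(K + k_{\rm min})^{-2}$. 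Combining,
\begin{equation}
\frac{\Delta K^2}{(K + k_{\rm min})^2} \;\le\; \ee_K K^2 \;\le\; \hat{\varphi},
\nonumber
\end{equation}
and since $\Delta K^2/(K+k_{\rm min})^2 \to \Delta$ as $K \to \infty$, the function $\varphi(K) := \ee_K K^2$ is eventually bounded between positive constants, which is exactly (\ref{eq:scalingLaw}).

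The main obstacle is not in this final counting step but is already absorbed into the hypotheses, specifically the admissibility statement of Lemma \ref{le:lowerBound}, which requires controlling all $k n_{\cX} + n_{\cY}$ points of the $\cS[k]$-cycle uniformly as $\ee$ sweeps through $(0,\Delta k^{-2}]$. A small bookkeeping item is to note that $\kappa(\ee)$ is finite for each $\ee > 0$, which is immediate from Lemma \ref{le:upperBound}: only $k < \sqrt{\hat{\varphi}/\ee}$ can possibly yield admissible attracting $\cS[k]$-cycles, so $\ee_K$ is well-defined for all sufficiently large $K$ and the asymptotic scaling (\ref{eq:scalingLaw}) is meaningful.
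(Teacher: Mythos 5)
Your proposal is correct and is essentially the paper's own argument: the paper omits a separate proof, stating that Theorem \ref{th:scaling4} follows from Lemmas \ref{le:upperBound} and \ref{le:lowerBound} in the same fashion as Theorem \ref{th:scaling3}, and your counting step (all $k \ge K$ virtual once $\ee > \hat{\varphi}K^{-2}$, at least $K+1$ admissible attracting cycles once $\ee \le \Delta(K+k_{\rm min})^{-2}$) is exactly that transplanted argument with $\lambda_2^{-k}(0)$ replaced by $k^{-2}$. The only implicit assumption you share with the paper is that $\kappa(\ee)$ actually attains the value $K$, so $\ee_K$ is well defined; otherwise nothing is missing.
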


Here we illustrate (\ref{eq:scalingLaw})
by perturbing the three codimension-four points identified in \S\ref{sub:examples4}.
As in \S\ref{sub:scaling3}
we consider perturbations of the form (\ref{eq:paramEe}),
where by Theorem \ref{th:alphaBeta}
we are constrained to choose $a$, $b$, $c$ and $d$ such that $\beta < \alpha < 0$.

For the first example the values of (\ref{eq:paramEe}) at $\ee = 0$ are given by (\ref{eq:paramA}),
and we have
\begin{align}
\alpha &= b + 2 d \;, \nonumber \\
\beta &= -2 \big(-1+\sqrt{2}\big) a + \big(-1+\sqrt{2}\big) b
- 2 \big(-1+\sqrt{2}\big) c + \big(-1+2\sqrt{2}\big) d \;. \nonumber
\end{align}
We therefore have $\beta < \alpha < 0$ when, for instance,
\begin{equation}
a = 1 \;, \qquad
b = -1 \;, \qquad
c = 0 \;, \qquad
d = 0 \;.
\label{eq:abcdA}
\end{equation}
Different values of $a$, $b$, $c$ and $d$ for which $\beta < \alpha < 0$ yield similar results.
In Fig.~\ref{fig:manyEeAll}-A we indicate the range of values of $\ee$ for which
the $\cS[k]$-cycles are admissible and attracting for all values of $k$ up to 100.
Since $\cS[k]$-cycles are admissible and stable only for $k \ge 8$ when $\ee = 0$, see Proposition \ref{pr:verify3},
these line segments emanate from $\ee = 0$ for $k \ge 8$.
For large $K$, $\ee_K$ is the right-hand end-point of the line segment for $k = K+7$,
and is a border-collision bifurcation at which the $(3k+2)^{\rm th}$ point of the $\cS[k]$-cycle
collides with the switching manifold.
With $\ee = 0.009$, there are five admissible, attracting $\cS[k]$-cycles;
these are shown to the right of panel A.
The $(3k+2)^{\rm th}$ points of these periodic solutions lie just to the left of the switching manifold
(around $y \approx -4$).

As discussed in \S\ref{sub:scaling3},
it is reasonable to assume that $\varphi(K)$ is constant,
which appears to be the case for this example.
Then the scaling law (\ref{eq:scalingLaw}) predicts that
$\frac{\ee_{2K}}{\ee_K}$ approaches the value $\frac{1}{4}$.
As shown in the inset of panel A,
this is consistent with the data in the figure.
(In order to obtain the points shown in the insets of Fig.~\ref{fig:manyEeAll},
$\cS[k]$-cycles were computed up to $k = 200$.)

\begin{figure}[t!]
\begin{center}
\setlength{\unitlength}{1cm}
\begin{picture}(14.9,11.3)
\put(.5,5.9){\includegraphics[height=5.4cm]{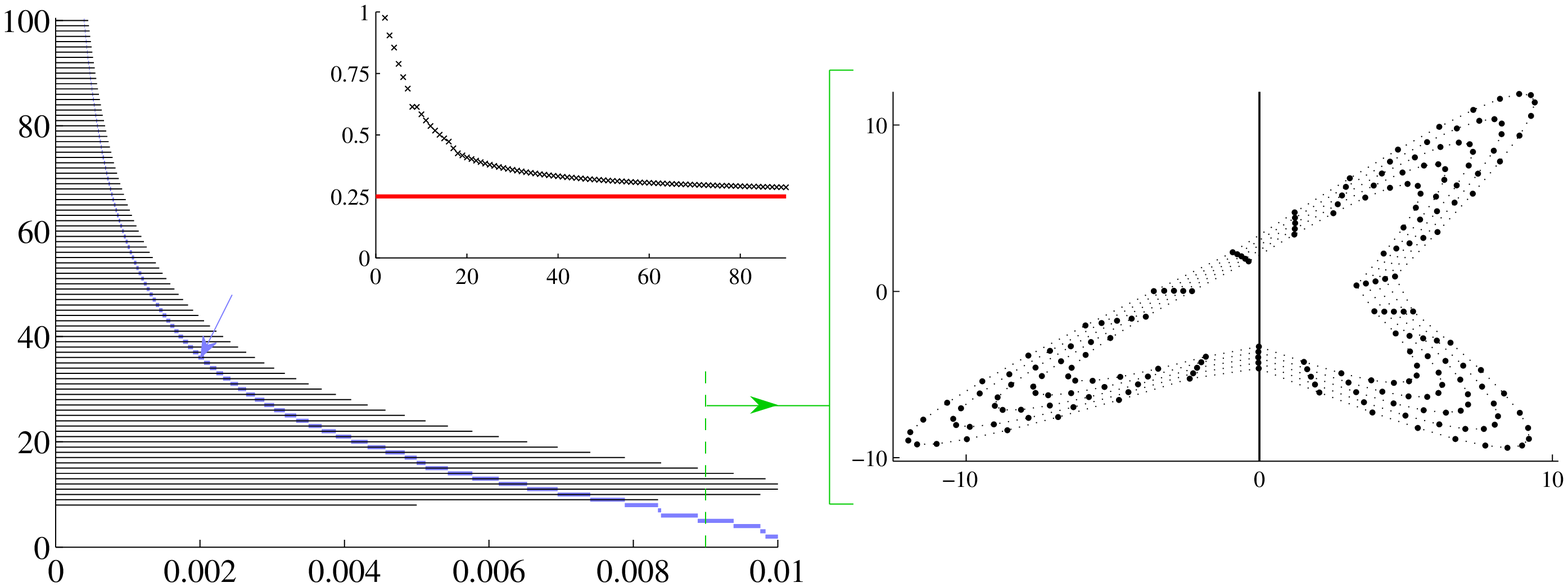}} 
\put(0,0){\includegraphics[height=5.4cm]{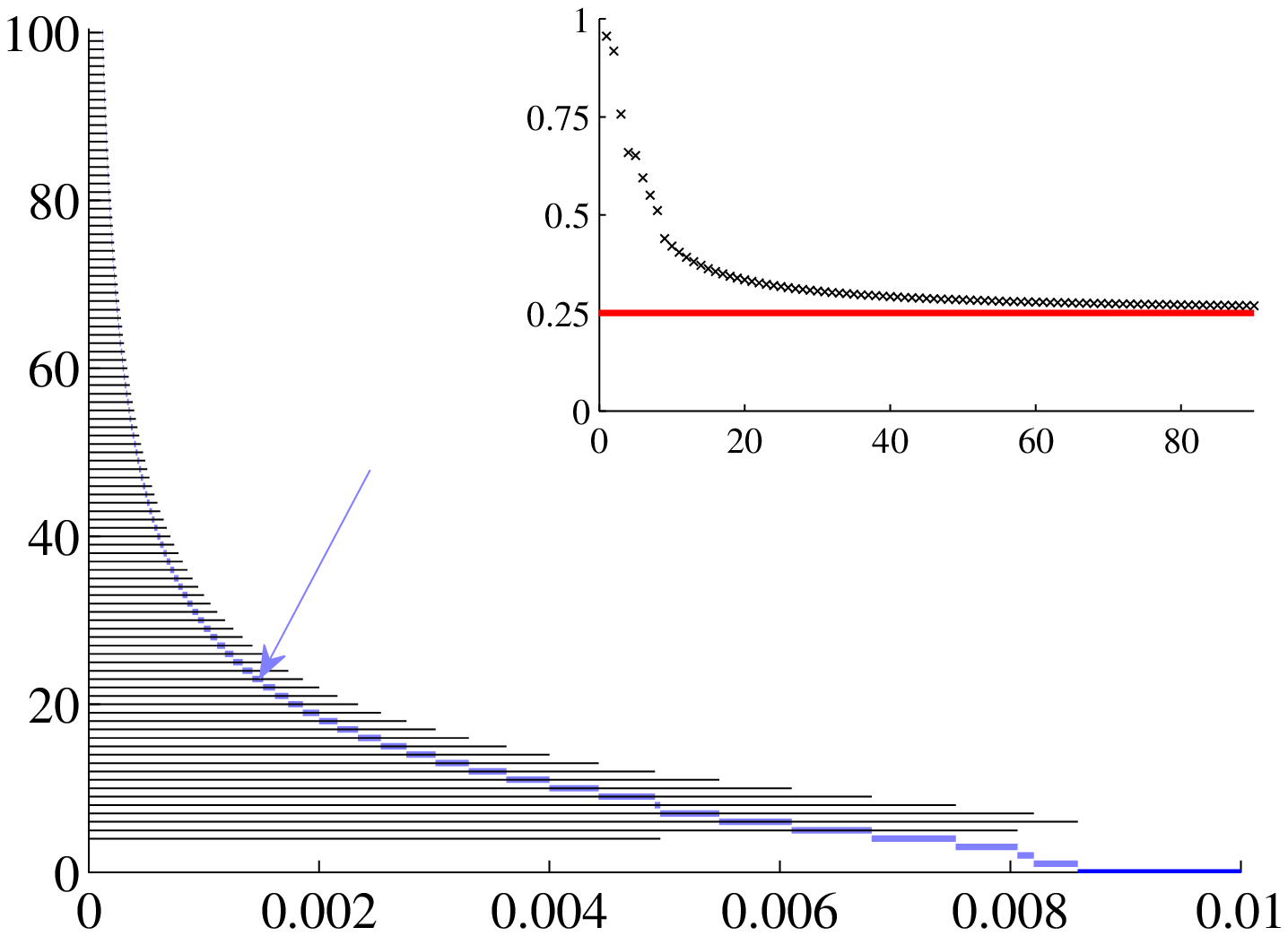}}	
\put(7.7,0){\includegraphics[height=5.4cm]{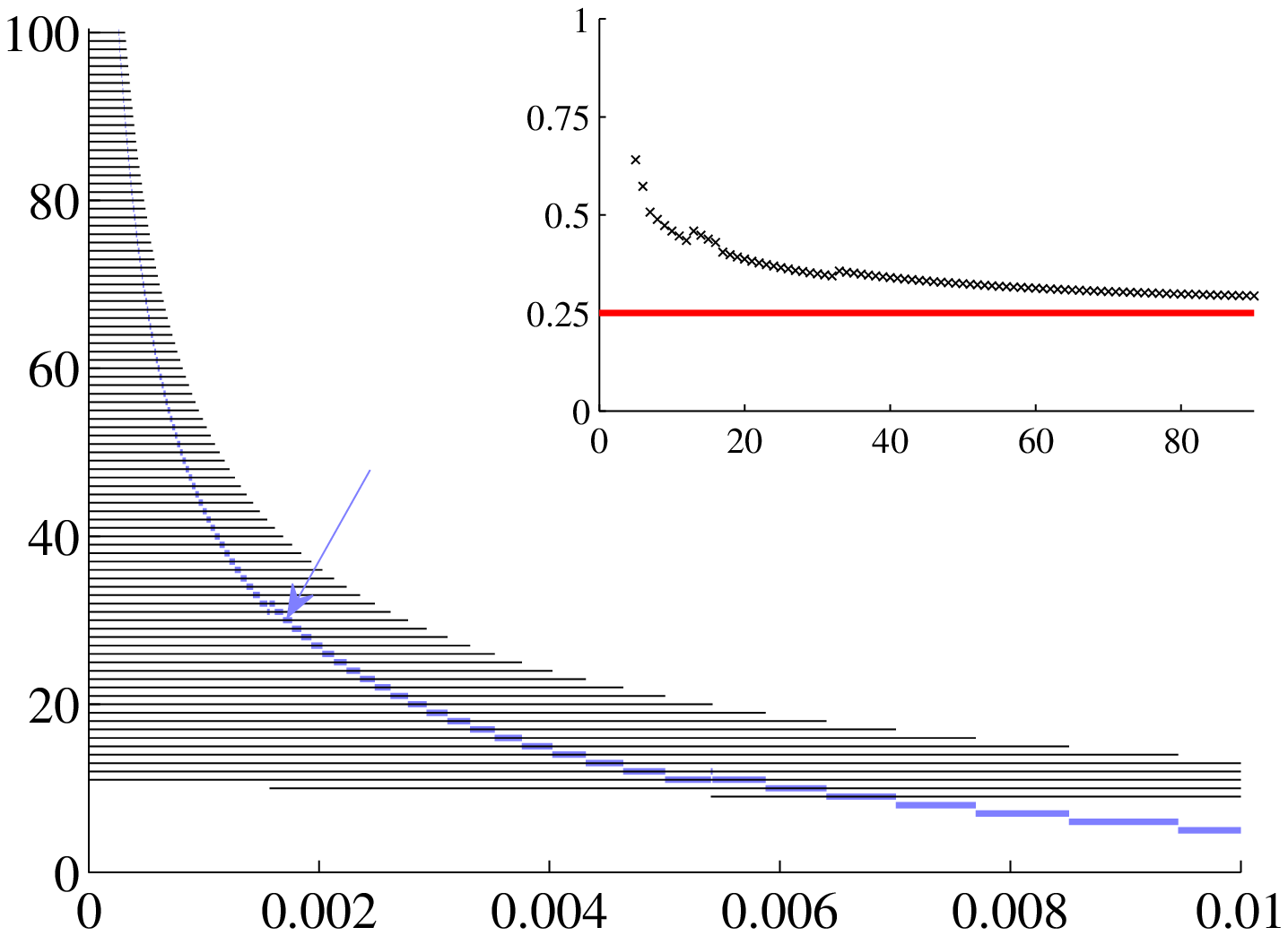}}
\put(11.47,6.8){\scriptsize $x$}
\put(8,8.7){\scriptsize $y$}
\put(10.9,10.6){\scriptsize $\ee = 0.009$}
\put(4.2,5.9){\small $\ee$}
\put(.5,9.55){\small $k$}
\put(3.7,0){\small $\ee$}
\put(0,3.65){\small $k$}
\put(11.4,0){\small $\ee$}
\put(7.7,3.65){\small $k$}
\put(2.45,8.8){\footnotesize \color{blue} $\kappa(\ee)$}
\put(1.95,2.9){\footnotesize \color{blue} $\kappa(\ee)$}
\put(9.65,2.9){\footnotesize \color{blue} $\kappa(\ee)$}
\put(5.55,8.5){\scriptsize $K$}
\put(2.95,10.2){\scriptsize $\frac{\ee_{2K}}{\ee_K}$}
\put(5.05,2.6){\scriptsize $K$}
\put(2.45,4.3){\scriptsize $\frac{\ee_{2K}}{\ee_K}$}
\put(12.75,2.6){\scriptsize $K$}
\put(10.15,4.3){\scriptsize $\frac{\ee_{2K}}{\ee_K}$}
\put(1.8,11.1){\large \sf \bfseries A}
\put(1.3,5.2){\large \sf \bfseries B}
\put(9,5.2){\large \sf \bfseries C}
\end{picture}
\caption{
Intervals of values of $\ee$ for which $\cS[k] = \cX^k \cY$-cycles are admissible, attracting periodic solutions
of (\ref{eq:f}) with $\mu = 1$, and remaining parameter values given by (\ref{eq:paramEe}).
Also plotted is $\kappa(\ee)$ -- the number of admissible, attracting $\cS[k]$-cycles.
The scaling law (\ref{eq:scalingLaw}) predicts that the points in the insets of the panels to converge to $\frac{1}{4}$.
To the right of panel A is a phase portrait corresponding to $\ee = 0.009$.
Here five $\cS[k]$-cycles are admissible and attracting ($k=10,\ldots,14$).
Panels A, B and C correspond to perturbations from examples with, respectively,
$n_{\cX} = 3$, $n_{\cX} = 4$ and $n_{\cX} = 1$, from \S\ref{sub:examples4}.
$\cX$ and $\cY$ are given by (\ref{eq:XYA}) in panel A,
(\ref{eq:XYB}) in panel B, and (\ref{eq:XYK}) in panel C.
The values of the parameters at $\ee = 0$ are given by (\ref{eq:paramA}) in panel A,
(\ref{eq:paramB}) in panel B, and (\ref{eq:paramK}) in panel C.
The values of $a$, $b$, $c$ and $d$ are given by (\ref{eq:abcdA}) in panel A,
(\ref{eq:abcdB}) in panel B, and (\ref{eq:abcdK}) in panel C.
\label{fig:manyEeAll}
}
\end{center}
\end{figure}

With the values of (\ref{eq:paramEe}) at $\ee = 0$ given by (\ref{eq:paramB})\removableFootnote{
\begin{align}
\alpha &= b + 3 d \;, \\
\beta &= 
\tau_{\sR}(0) \left( \tau_{\sR}(0)^2 - 2 \right) a
- \left( \tau_{\sR}(0)^2 - 1 \right) b
+ \frac{2 \tau_{\sR}(0) \left( \tau_{\sR}(0)^2 + 2 \right)}{\tau_{\sR}(0)^2 - 2} c
- \frac{\tau_{\sR}(0)^4 - \tau_{\sR}(0)^2 + 6}{\tau_{\sR}(0)^2 - 2} d \;.
\end{align}
}
we have $\beta < \alpha < 0$ when, for instance,
\begin{equation}
a = 0 \;, \qquad
b = -1 \;, \qquad
c = 0 \;, \qquad
d = -1 \;.
\label{eq:abcdB}
\end{equation}
As shown in Fig.~\ref{fig:manyEeAll}-B, the values of $\ee_K$ are consistent with (\ref{eq:scalingLaw}).

Lastly with (\ref{eq:paramK}) and
\begin{equation}
a = -2 \;, \qquad
b = -1 \;, \qquad
c = -4 \;, \qquad
d = 0 \;,
\label{eq:abcdK}
\end{equation}
we have $\beta < \alpha < 0$ (here simply $\alpha = b$ and $\beta = a$).
Again, this example is consistent with (\ref{eq:scalingLaw}), see Fig.~\ref{fig:manyEeAll}-C.

\section{Discussion}
\label{sec:conc}
\setcounter{equation}{0}

\subsubsection*{Summary and conclusions}

This paper investigates large numbers of periodic solutions in the 
two-dimensional border-collision normal form (\ref{eq:f}) when $\mu \ne 0$.
It was shown that, for appropriate choices of $\cX$ and $\cY$,
infinitely many stable $\cS[k]$-cycles (where $\cS[k] = \cX^k \cY$)
may coexist due to a repeated unit eigenvalue for the matrix $M_{\cX}$.
Necessary conditions for this phenomenon are given by Theorem \ref{th:codim4},
from which we see that the phenomenon is codimension-four.
Unlike at codimension-three points of
infinite coexistence due to a coincident homoclinic connection of an underlying saddle-type $\cX$-cycle (see Theorem \ref{th:codim3}),
at the codimension-four points the $\cS[k]$-cycles grow in size without bound.
In the context of border-collision bifurcations of piecewise-smooth maps,
the border-collision normal form is an approximation obtained by omitting nonlinear terms in the two half maps.
Consequently the dynamics of the border-collision normal form far from the origin
may diverge substantially from the intended application.
For this reason it is expected that nonlinear terms will have a greater effect
on $\cS[k]$-cycles for the codimension-four points than for the codimension-three points.

Perturbations from both types of high-codimension points were studied.
We let $\ee_K$ denote the supremum distance in a particular direction of parameter space from a
codimension-three or four point for which the number of admissible, attracting $\cS[k]$-cycles is $K$.
It was shown that near a codimension-three point, $\ee \sim \varphi(K) \lambda_2(0)^{-K}$,
whereas near a codimension-four point, $\ee \sim \varphi(K) K^{-2}$,
(where $\lambda_2(\ee) > 1$ is the unstable stability multiplier of the $\cX$-cycle
and in each case the function $\varphi(K)$ is bounded between positive constants).
Each $\ee_K$ corresponds to a bifurcation of an $\cS[k]$-cycle,
and the most natural scenario is for each $\ee_K$
to correspond to the same type of bifurcation, for large $K$.
This is the case for each example described above.
Each $\ee_K$ corresponds to the border-collision of a particular point of an $\cS[k]$-cycle.
The bifurcation values may then be matched to the root of an equation involving $\ee$ and $k$
from which, asymptotically, either $\ee_K \sim \varphi \lambda_2(0)^{-K}$
or $\ee_K \sim \varphi K^{-2}$, for some constant $\varphi$.
This suggests that for both scaling laws, $\varphi(K)$ is constant in general.
However, it appears that a demonstration of this claim requires
substantial additional analysis\removableFootnote{
For the codimension-four case the equations are nonlinear 
with coefficients given by complicated expressions
so it is difficult to extract values $\varphi$ or even
show that a root exists.
}
and a consideration of all points of $\cS[k]$-cycles,
as in the proof of Lemmas \ref{le:lowerBoundZ} and \ref{le:lowerBound},
and remains for future work.

In the four-dimensional parameter space of (\ref{eq:f}) (with fixed $\mu \ne 0$),
there are curves of codimension-three points associated with Theorem \ref{th:codim3}.
On such a curve the eigenvalues of $M_{\cX}$ are distinct, positive and multiply to $1$,
yet the end-points of the curves cannot correspond to a repeated unit eigenvalue and
a codimension-four point associated with Theorem \ref{th:codim4}.
To see why this is the case, we first note that at any codimension-three point of Theorem \ref{th:codim3}
we have ${\rm trace} \left( M_{\cX} \right) > 2$.
Thus if we attempt to find such a codimension-three point
by applying a small perturbation of a codimension-four point of Theorem \ref{th:codim4},
we must have $\beta > 0$, see (\ref{eq:traceMX}).
However, by Theorem \ref{th:alphaBeta} the $\cS[k]$-cycles are not stable in this case,
and therefore we will not find a codimension-three point associated with Theorem \ref{th:codim3}.

At first glance, the codimension-four points 
may appear to be less important than the codimension-three points
simply because they involve an additional codimension.
However, large numbers of attracting $\cS[k]$-cycles exist further from the codimension-four points
because $K^{-2}$ decays much slower than $\lambda_2(0)^{-K}$.
Indeed the notion that the border-collision normal form could exhibit arbitrarily many coexisting attractors
was in part motivated by the example (\ref{eq:paramSi10}),
at which there are six attracting $\cS[k]$-cycles.
We can now see that this point of parameter space is a relatively large distance away from the
corresponding codimension-four point, (\ref{eq:paramA}).
Given any $K$, and any codimension-four point associated with Theorem \ref{th:codim4},
there exists an open region of parameter space
at which (\ref{eq:f}) has $K$ attracting $\cS[k]$-cycles.
The codimension-four point lies on the boundary of this region.
Similarly, about any curve of codimension-three points associated with Theorem \ref{th:codim3},
there exists an open region of parameter space at which (\ref{eq:f}) has $K$ attracting $\cS[k]$-cycles.
If the $\cS[k]$-cycles are attracting on the curve, then the curve lies in the interior of this region.
In view of the scaling law, $\ee \sim \varphi(K) \lambda_2(0)^{-K}$, we should expect that this region is narrow and tubular in shape.

\subsubsection*{Newhouse regions}

It is instructive to compare the results of \S\ref{sec:codimperturb3} with Newhouse regions of smooth maps \cite{PaTa93,GoSh96}.
Let $f_{\ee}$ be a smooth map on $\mathbb{R}^2$ that varies continuously with a parameter $\ee$.
Suppose $f_0$ has a saddle-type periodic solution
for which the stable and unstable manifolds of the solution have a homoclinic tangency.
Then generically there exists a sequence of intervals $\left( \ee_k^-,\ee_k^+ \right)$,
with $\ee_k^{\pm} \to 0$ as $k \to \infty$,
within which $f_{\ee}$ has a periodic solution of period $n_k$,
where $n_k$ increases linearly with $k$,
and the periodic solutions are attracting if $|\lambda_1 \lambda_2| < 1$,
where $\lambda_1$ and $\lambda_2$ are the stability multipliers associated with the saddle-type periodic solution \cite{GaSi72,GaSi73}\removableFootnote{
Some particular cases:
(i) dyns near saddle-focus equilibrium \cite{GlSp84},
(ii) dyns near HC tangency of periodic orbit \cite{GaWa87},
(iii) dyns near Silnikov HC orbit \cite{HiKn93}.
}.
In comparison, at the codimension-three points of \S\ref{sec:codimperturb3}
there is a saddle-type periodic solution with a coincident homoclinic connection,
and by part (\ref{it:lambda12Z}) of Theorem \ref{th:codim3} we must have $\lambda_1 \lambda_2 = 1$.

The values $\ee_k^{\pm}$ exhibit the same scaling law as that described in \S\ref{sub:scaling3}.
Specifically $\ee_k^{\pm}$ scales with $\lambda_2^{-k}$, where $\lambda_2$ is the unstable
stability multiplier 	of the saddle-type periodic solution \cite{Ro83,CuJo82}.
However, the intervals $\left( \ee_k^-,\ee_k^+ \right)$ do not overlap.
The map $f_{\ee}$ has infinitely many attractors for a dense set of values of $\ee$ near zero
because there is a fractal structure of subsidiary bifurcation sequences \cite{Ro83,GoBa02}.
In our case, intervals of values of $\ee$ at which there exist attracting $\cS[k]$-cycles do overlap,
and (\ref{eq:f}) may have infinitely many attractors at $\ee = 0$.
The scenario of \S\ref{sec:codimperturb3} is reminiscent of an invertible,
continuous, piecewise-smooth map of a square given in \cite{GaTr83}.
This map consists of three pieces (two affine pieces and one nonlinear piece),
has a saddle-type fixed point (with stability multipliers that multiply to $1$)
whose stable and unstable manifolds have a homoclinic tangency.
The map has an infinite sequence of attracting periodic solutions
that converges to a homoclinic connection.

\subsubsection*{Outlook}

It remains to determine exactly what invariant sets are created
in the border-collision bifurcations $\ee_K$,
extend the results to the $N$-dimensional border-collision normal form,
and explore the phenomena in the context of grazing-sliding bifurcations \cite{GlJe12,GlKo12}.
Sequences of saddle-type solutions, which were considered in detail in \cite{Si14},
were not looked at here.
The stable manifolds of these periodic solutions can correspond to the
boundaries of the basins of attraction of the $\cS[k]$-cycles.
Given a base sequence $\cX$,
it remains to determine for which sequences $\cY$
the map (\ref{eq:f}) is able to exhibit infinitely many stable $\cS[k]$-cycles.
For each of the examples considered in \S\ref{sub:scaling3} and \S\ref{sub:examples4}, and for all $k$,
$\cS[k]$ is a ``rotational symbol sequence'' \cite{SiMe10}.
Such sequences relate to rigid rotation on a circle,
suggesting that the choices for $\cY$ may be relatively limited.

\appendix

\section{A derivation of equation (\ref{eq:xSkj})}
\label{app:eigenvector}

Here we suppose that $[0,1]^{\sf T}$ is an eigenvector of $M_{\cX}$ and derive (\ref{eq:xSkj}).
The approach here parallels that used in the proof of Theorem \ref{th:codim4},
and indeed the formulas that arise are different only in that $x$ and $y$ are switched, in a sense
(here $M_{\cX}$ is lower triangular, whereas in \S\ref{sub:codim4} $M_{\cX}$ is upper triangular).
In view of this switch it is necessary to apply different logical arguments
to the resulting formulas.

If $[0,1]^{\sf T}$ is an eigenvector of $M_{\cX}$
then $M_{\cX} = \left[ \begin{array}{cc} 1 & 0 \\ m_{21} & 1 \end{array} \right]$,
for some $m_{21} \ne 0$ ($m_{21}$ is nonzero because the geometric multiplicity is $1$).
We can therefore write
\begin{equation}
f^{\cX}(x,y) = \left[ \begin{array}{cc} 1 & 0 \\ m_{21} & 1 \end{array} \right]
\left[ \begin{array}{c} x \\ y \end{array} \right]
+ \left[ \begin{array}{c} \rho_1 \\ \rho_2 \end{array} \right] \;,
\label{eq:fX}
\end{equation}
for some $\rho_1, \rho_2 \in \mathbb{R}$.
It follows that powers of $f^{\cX}$ are given by
\begin{equation}
f^{\cX^k}(x,y) = \left[ \begin{array}{cc} 1 & 0 \\ m_{21} k & 1 \end{array} \right]
\left[ \begin{array}{c} x \\ y \end{array} \right]
+ \left[ \begin{array}{c} \rho_1 k \\
\frac{\rho_1 m_{21}}{2} k (k-1) + \rho_2 k \end{array} \right] \;.
\label{eq:fXk}
\end{equation}
We also write
\begin{equation}
f^{\cY}(x,y) = \left[ \begin{array}{cc}
\gamma_{11} & \gamma_{12} \\
\gamma_{21} & \gamma_{22} \end{array} \right]
\left[ \begin{array}{c} x \\ y \end{array} \right]
+ \left[ \begin{array}{c} \sigma_1 \\ \sigma_2 \end{array} \right] \;,
\label{eq:fY}
\end{equation}
for some $\gamma_{ij}, \sigma_1, \sigma_2 \in \mathbb{R}$.
By composing (\ref{eq:fXk}) and (\ref{eq:fY}) we obtain
\begin{equation}
f^{\cS[k]}(x,y) = \left[ \begin{array}{cc}
\gamma_{12} m_{21} k + \gamma_{11} & \gamma_{12} \\
\gamma_{22} m_{21} k + \gamma_{21} & \gamma_{22} 
\end{array} \right]
\left[ \begin{array}{c} x \\ y \end{array} \right]
+ \left[ \begin{array}{c}
\gamma_{11} \rho_1 k + \frac{\gamma_{12} \rho_1 m_{21}}{2} k (k-1) + \gamma_{12} \rho_2 k + \sigma_1 \\
\gamma_{21} \rho_1 k + \frac{\gamma_{22} \rho_1 m_{21}}{2} k (k-1) + \gamma_{22} \rho_2 k + \sigma_2
\end{array} \right] \;.
\label{eq:fSk}
\end{equation}
The matrix part of (\ref{eq:fSk}) is $M_{\cS[k]}$.
Hence ${\rm trace} \left( M_{\cS[k]} \right) = \gamma_{12} m_{21} k + \gamma_{11} + \gamma_{22}$.
But $\cS[k]$-cycles are assumed to be stable for arbitrarily large values of $k$,
hence ${\rm trace} \left( M_{\cS[k]} \right) \not\to \infty$ as $k \to \infty$,
see (\ref{eq:stabConditionSN})-(\ref{eq:stabConditionNS}).
Therefore we must have $\gamma_{12} = 0$.
Consequently $M_{\cS[k]}$ is a lower triangular matrix
with diagonal elements $\gamma_{11}$ and $\gamma_{22}$,
and these are eigenvalues of $M_{\cS[k]}$.
Since $\cS[k]$-cycles are assumed to be unique,
$M_{\cS[k]}$ cannot have a unit eigenvalue (see \S\ref{sec:backg}),
thus $\gamma_{11}, \gamma_{22} \ne 1$.

The unique fixed point of $f^{\cS[k]}$ is denoted $\left( x^{\cS[k]}_0, y^{\cS[k]}_0 \right)$.
By using (\ref{eq:fSk}) with $\gamma_{12} = 0$, we obtain
$x^{\cS[k]}_0 = \frac{\rho_1 \gamma_{11}}{1 - \gamma_{11}} k + \frac{\sigma_1}{1 - \gamma_{11}}$.
By (\ref{eq:fX}), $x^{\cS[k]}_{(j+1) n_{\cX}} = x^{\cS[k]}_{j n_{\cX}} + \rho_1$,
for all $j = 0,\ldots,k-1$.
This produces
\begin{equation}
x^{\cS[k]}_{j n_{\cX}} = \rho_1 j 
+ \frac{\rho_1 \gamma_{11}}{1 - \gamma_{11}} k + \frac{\sigma_1}{1 - \gamma_{11}} \;,
\nonumber
\end{equation}
for all $j = 0,\ldots,k$, as required.

\section{Proof of infinite coexistence at a codimension-four point with $n_{\cX}=3$}
\label{app:verify3}

\begin{proof}[Proof of Proposition \ref{pr:verify3}]
We explicitly compute each point of an arbitrary $\cS[k]$-cycle in $(u,v)$-coordinates\removableFootnote{
see {\sc verifyA.m}
}.
For the parameter values (\ref{eq:paramA}), we have
$M_{\cX} = \left[ \begin{array}{cc}
3-\sqrt{2} & 1-\sqrt{2} \\
-2+2\sqrt{2} & -1+\sqrt{2}
\end{array} \right]$, and thus $[1,\nu]^{\sf T}$ is an eigenvector of $M_{\cX}$ with $\nu = \sqrt{2}$.
By applying the transformation (\ref{eq:uv}) with $\nu = \sqrt{2}$ we obtain
\begin{align}
g^{\sL}(w) &= \left[ \begin{array}{cc}
0 & 1 \\
-1 & -\sqrt{2}
\end{array} \right] w +
\left[ \begin{array}{c} 1 \\ -\sqrt{2} \end{array} \right] \;, \label{eq:gLverify} \\
g^{\sR}(w) &= \left[ \begin{array}{cc}
1 & 1 \\
-\left(1+\sqrt{2}\right) & -\sqrt{2}
\end{array} \right] w +
\left[ \begin{array}{c} 1 \\ -\sqrt{2} \end{array} \right] \;, \label{eq:gRverify} \\
g^{\cX}(w) &= \left[ \begin{array}{cc}
1 & -\left(-1+\sqrt{2}\right) \\
0 & 1
\end{array} \right] w +
\left[ \begin{array}{c} -2\left(-1+\sqrt{2}\right) \\ 2-\sqrt{2} \end{array} \right] \;, \label{eq:gXverify} \\
g^{\cY}(w) &= \left[ \begin{array}{cc}
-1 & 1 \\
0 & -1
\end{array} \right] w +
\left[ \begin{array}{c} 3-\sqrt{2} \\ -\sqrt{2} \end{array} \right] \;. \label{eq:gYverify}
\end{align}
By appropriately composing (\ref{eq:gXverify}) and (\ref{eq:gYverify}) we arrive at
\begin{align}
& g^{\cX^{k-j} \cY \cX^j}(w) = \left[ \begin{array}{cc}
-1 & \left(-1+\sqrt{2}\right) k + 1 \\
0 & -1
\end{array} \right] w \nonumber \\
&+\left[ \begin{array}{c}
-\left(-4+3\sqrt{2}\right) j^2 + \frac{1}{2}\left(-4+3\sqrt{2}\right) k^2
-\sqrt{2} j + \frac{1}{2}\left(4-\sqrt{2}\right) k + 3-\sqrt{2} \\
2\left(2-\sqrt{2}\right) j - \left(2-\sqrt{2}\right) k - \sqrt{2}
\end{array} \right] \;.
\label{eq:gSkjverify}
\end{align}
The stability multipliers of the $\cS[k]$-cycle are the eigenvalues of the matrix part of (\ref{eq:gSkjverify}).
Therefore each $\cS[k]$-cycle has a repeated stability multiplier of $-1$, and is therefore stable
(assuming it is admissible).
The unique fixed point of (\ref{eq:gSkjverify}) is
\begin{equation}
w^{\cS[k]}_{3j} = \left[ \begin{array}{c}
\frac{1}{2}\left(-4+3\sqrt{2}\right) j(k-j)
- \left(-1+\sqrt{2}\right) j + \frac{\sqrt{2}}{4} k + \frac{3}{4}\left(2-\sqrt{2}\right) \\
\left(2-\sqrt{2}\right) j - \frac{1}{2}\left(2-\sqrt{2}\right) k - \frac{\sqrt{2}}{2}
\end{array} \right] \;,
\label{eq:wSk3jverify}
\end{equation}
valid for $j = 0,\ldots,k$.
(In $(u,v)$-coordinates we denote the points of an $\cS[k]$-cycle by $w^{\cS[k]}_i$, for $i = 0,\ldots,k n_{\cX} + n_{\cY}$,
where here $n_{\cX} = 3$ and $n_{\cY} = 4$.)
The image of (\ref{eq:wSk3jverify}) under $g^{\sR}$ (\ref{eq:gRverify}) is
\begin{equation}
w^{\cS[k]}_{3j+1} = \left[ \begin{array}{c}
\frac{1}{2}\left(-4+3\sqrt{2}\right) j(k-j)
+ \left(3-2\sqrt{2}\right) j + \frac{1}{4}\left(-4+3\sqrt{2}\right) k
+ \frac{5}{4}\left(2-\sqrt{2}\right) \\
-\frac{1}{2}\left(2-\sqrt{2}\right) j(k-j)
+ \left(3-2\sqrt{2}\right) j - \frac{3}{4}\left(2-\sqrt{2}\right) k
+ \frac{1}{4}\left(4-7\sqrt{2}\right)
\end{array} \right] \;,
\label{eq:wSk3jp1verify}
\end{equation}
valid for $j = 0,\ldots,k-1$,
and the image of (\ref{eq:wSk3jp1verify}) under $g^{\sR}$ is
\begin{equation}
w^{\cS[k]}_{3j+2} = \left[ \begin{array}{c}
-\left(3-2\sqrt{2}\right) j(k-j)
+ 2\left(3-2\sqrt{2}\right) j - \frac{1}{2}\left(5-3\sqrt{2}\right) k
+ \frac{3}{2}\left(3-2\sqrt{2}\right) \\
\frac{1}{2}\left(-4+3\sqrt{2}\right) j(k-j)
-\left(-5+4\sqrt{2}\right) j + \frac{1}{4}\left(-8+7\sqrt{2}\right) k
- \frac{1}{4}\left(-14+13\sqrt{2}\right)
\end{array} \right] \;,
\label{eq:wSk3jp2verify}
\end{equation}
valid for $j = 0,\ldots,k-1$.
By evaluating (\ref{eq:wSk3jverify}) at $j=k$ we obtain
\begin{equation}
w^{\cS[k]}_{3k} = \left[ \begin{array}{c}
-\frac{1}{4}\left(-4+3\sqrt{2}\right) k + \frac{3}{4}\left(2-\sqrt{2}\right) \\
\frac{1}{2}\left(2-\sqrt{2}\right) k - \frac{\sqrt{2}}{2}
\end{array} \right] \;.
\label{eq:wSk3kverify}
\end{equation}
Finally, by iterating (\ref{eq:wSk3kverify}) through the sequence
$g^{\sL}$, $g^{\sR}$, and $g^{\sL}$, we obtain
\begin{align}
w^{\cS[k]}_{3k+1} &= \left[ \begin{array}{c}
\frac{1}{2}\left(2-\sqrt{2}\right) k + \frac{1}{2}\left(2-\sqrt{2}\right) \\
-\frac{\sqrt{2}}{4} k - \frac{1}{4}\left(2+\sqrt{2}\right)
\end{array} \right] \;, \nonumber \\ 
w^{\cS[k]}_{3k+2} &= \left[ \begin{array}{c}
-\frac{1}{4}\left(-4+3\sqrt{2}\right) k + \frac{3}{4}\left(2-\sqrt{2}\right) \\
-\frac{1}{2}\left(-1+\sqrt{2}\right) k - \frac{1}{2}\left(-1+2\sqrt{2}\right)
\end{array} \right] \;, \nonumber \\ 
w^{\cS[k]}_{3k+3} &= \left[ \begin{array}{c}
-\frac{1}{2}\left(-1+\sqrt{2}\right) k + \frac{1}{2}\left(3-2\sqrt{2}\right) \\
\frac{\sqrt{2}}{4} k - \frac{1}{4}\left(-2+3\sqrt{2}\right)
\end{array} \right] \;. \nonumber 
\end{align}
Here $\cS[k] = \left( \sR^2 \sL \right)^k \sL \sR \sL^2$,
therefore the $\cS[k]$-cycle is admissible with no points on the switching manifold if
\begin{equation}
\begin{gathered}
u^{\cS[k]}_{3j} > 0 \;, \quad
u^{\cS[k]}_{3j+1} > 0 \;, \quad
u^{\cS[k]}_{3j+2} < 0 \;, \quad {\rm ~for~} j=0,\ldots,k-1 \;, \\
u^{\cS[k]}_{3k} < 0 \;, \qquad
u^{\cS[k]}_{3k+1} > 0 \;, \qquad
u^{\cS[k]}_{3k+2} < 0 \;, \qquad
u^{\cS[k]}_{3k+3} < 0 \;.
\end{gathered}
\label{eq:admissibilityverify}
\end{equation}
From (\ref{eq:wSk3kverify}) we have
$u^{\cS[k]}_{3k} = -\frac{1}{4}\left(-4+3\sqrt{2}\right) k + \frac{3}{4}\left(2-\sqrt{2}\right)$.
If $k \le 7$, $u^{\cS[k]}_{3k}$ is positive, thus by (\ref{eq:admissibilityverify}) the $\cS[k]$-cycle is virtual.
However, for all $k \ge 8$, $u^{\cS[k]}_{3k}$ is negative,
and elementary calculations suffice to show that the remaining inequalities in (\ref{eq:admissibilityverify}) also hold.
For instance, to verify $u^{\cS[k]}_{3j} > 0$, for all $j = 0,\ldots,k-1$,
we first note that by (\ref{eq:wSk3jverify}), $u^{\cS[k]}_{3j}$ is a concave down quadratic function of $j$.
Therefore its minimum value over $j = 0,\ldots,k-1$ occurs
at an endpoint of this range of values.
Simple calculations for (\ref{eq:wSk3jverify}) reveal that for any $k \ge 1$,
$u^{\cS[k]}_{3j} > 0$ for both $j=0$ and $j=k-1$,
hence $u^{\cS[k]}_{3j} > 0$ for all $j = 0,\ldots,k-1$.
\end{proof}

\section{Calculations in an alternate coordinate system relating to the codimension-four points}
\label{app:altalt}
\setcounter{equation}{0}

As in Theorem \ref{th:alphaBeta},
here we suppose that when $\ee = 0$, (\ref{eq:f}) satisfies the assumptions of Theorem \ref{th:codim4}
and $\delta_{\sL}(0) = \delta_{\sR}(0) = 1$.
Then with $\beta < \alpha < 0$, for small $\ee > 0$ the eigenvalues of
$M_{\cX}(\ee)$ are $\lambda(\ee) = r(\ee) {\rm e}^{\pm {\rm i} \theta(\ee)}$,
where $r(\ee)$ and $\theta(\ee)$ satisfy (\ref{eq:rtheta}).
Let $[1,p(\ee) \pm {\rm i} q(\ee)]^{\sf T}$ denote the eigenvectors of $M_{\cX}(\ee)$.
In this appendix we let
\begin{equation}
\hat{Q}(\ee) = \left[ \begin{array}{cc} 1 & 0 \\ p(\ee) & q(\ee) \end{array} \right] \;,
\label{eq:Qhat}
\end{equation}
and work in the alternate coordinate system
\begin{equation}
\left[ \begin{array}{c} \hat{u} \\ \hat{v} \end{array} \right] =
\hat{Q}^{-1}(\ee) 
\left[ \begin{array}{c} x \\ y \end{array} \right] \;.
\label{eq:uvHat}
\end{equation}
Note, $\hat{Q}(\ee)$ depends on $\ee$ and is well-defined for small $\ee > 0$,
whereas $Q$ (\ref{eq:Q}) is independent of $\ee$.

\subsection{The determinant and trace of $M_{\cS[k]}$}
\label{sub:detTraceMSk}


Here we derive equations (\ref{eq:detMSk}) and (\ref{eq:traceMSk}).
First we calculate $p(\ee)$ and $q(\ee)$ to leading order.
By assumption there exists $\nu \in \mathbb{R}$ such that 
$Q^{-1} M_{\cX}(0) Q = \left[ \begin{array}{cc} 1 & \omega_{12} \\ 0 & 1 \end{array} \right]$,
where $\omega_{12} \ne 0$ and $Q$ is given by (\ref{eq:Q}).
From this identity it follows that we must have
$M_{\cX}(0) = \left[ \begin{array}{cc} 1 - \nu \omega_{12} & \omega_{12} \\ -\nu^2 \omega_{12} & 1 + \nu \omega_{12} \end{array} \right]$.
By using this expression to match both sides of the eigenvalue equation,
$M_{\cX}(\ee) \left[ \begin{array}{cc} 1 \\ p(\ee) \pm {\rm i} q(\ee) \end{array} \right]
= \lambda(\ee) \left[ \begin{array}{cc} 1 \\ p(\ee) \pm {\rm i} q(\ee) \end{array} \right]$,
accurate to $\cO \big( \ee^{\frac{1}{2}} \big)$,
we obtain the formulas
\begin{equation}
p(\ee) = \nu + \cO(\ee) \;, \qquad
q(\ee) = \frac{\sqrt{\alpha-\beta}}{\omega_{12}} \,\ee^{\frac{1}{2}} + \cO(\ee) \;.
\label{eq:pq}
\end{equation}

Let
\begin{equation}
\hat{\Omega}(\ee) = \hat{Q}^{-1}(\ee) M_{\cX}(\ee) \hat{Q}(\ee) \;, \qquad
\hat{\Gamma}(\ee) = \hat{Q}^{-1}(\ee) M_{\cY}(\ee) \hat{Q}(\ee) \;.
\nonumber
\end{equation}
Then
\begin{equation}
\hat{\Omega}(\ee) = r(\ee) \left[ \begin{array}{cc}
\cos(\theta(\ee)) & \sin(\theta(\ee)) \\
-\sin(\theta(\ee)) & \cos(\theta(\ee))
\end{array} \right] \;,
\label{eq:Omegahat}
\end{equation}
and therefore
\begin{equation}
\hat{\Omega}^k(\ee) = r^k(\ee) \left[ \begin{array}{cc}
\cos(k\theta(\ee)) & \sin(k\theta(\ee)) \\
-\sin(k\theta(\ee)) & \cos(k\theta(\ee))
\end{array} \right] \;.
\label{eq:Omegahatk}
\end{equation}
By evaluating $\hat{\Gamma}(\ee)$ using (\ref{eq:Q}), (\ref{eq:Qhat}) and
$Q^{-1} M_{\cY}(0) Q = \left[ \begin{array}{cc} -1 & \gamma_{12} \\ 0 & -1 \end{array} \right]$
we obtain
\begin{equation}
\hat{\Gamma}(\ee) = \left[ \begin{array}{cc}
-1 + \cO(\ee) & \gamma_{12} q(\ee) + \cO \big( \ee^{\frac{3}{2}} \big) \\
\cO \big( \ee^{\frac{3}{2}} \big) & -1 + \cO(\ee)
\end{array} \right] \;.
\label{eq:Gammahat}
\end{equation}
Finally, since $M_{\cS[k]}(\ee) = M_{\cY}(\ee) M_{\cX}^k(\ee)$,
and eigenvalues are invariant under similarity transforms, we have
$\det \left( M_{\cS[k]}(\ee) \right) =
\det \big( \hat{\Gamma}(\ee) \big) \det \big( \hat{\Omega}(\ee) \big)^k$.
By substituting (\ref{eq:Omegahatk}) and (\ref{eq:Gammahat})
into this expression we obtain (\ref{eq:detMSk}).
Similarly, using ${\rm trace} \left( M_{\cS[k]}(\ee) \right) = {\rm trace} \big( \hat{\Gamma}(\ee) \hat{\Omega}^k(\ee) \big)$
we obtain (\ref{eq:traceMSk})
with $\phi = \frac{\gamma_{12} \sqrt{\alpha-\beta}}{\omega_{12}}$,
where we have also used (\ref{eq:pq}).

\subsection{A derivation of equations (\ref{eq:uv0}) and (\ref{eq:uvk1nX})}
\label{sub:Omegahat}

For the coordinates (\ref{eq:uvHat}), let $\hat{w} = (\hat{u},\hat{v})$
and for any $\cS$, let $\hat{g}^{\cS}$ denote $f^{\cS}$ in $(\hat{u},\hat{v})$-coordinates.
Write
\begin{equation}
\hat{g}^{\cX}(\hat{w}) = \hat{\Omega}(\ee) \hat{w} + \hat{F}(\ee) \;, \qquad
\hat{g}^{\cY}(\hat{w}) = \hat{\Gamma}(\ee) \hat{w} + \hat{G}(\ee) \;,
\nonumber
\end{equation}
where $\hat{\Omega}(\ee)$ and $\hat{\Gamma}(\ee)$ are given by (\ref{eq:Omegahat})
and (\ref{eq:Gammahat}), and
\begin{equation}
\hat{F}(\ee) = \left[ \begin{array}{c}
\rho_1 + \cO(\ee) \\
\ee^{-\frac{1}{2}} \left( \frac{\omega_{12} \rho_2}{\sqrt{\alpha-\beta}} +
\cO \big( \ee^{\frac{1}{2}} \big) \right)
\end{array} \right] \;, \qquad
\hat{G}(\ee) = \left[ \begin{array}{c}
\sigma_1 + \cO(\ee) \\
\ee^{-\frac{1}{2}} \left( \frac{\omega_{12} \sigma_2}{\sqrt{\alpha-\beta}} +
\cO \big( \ee^{\frac{1}{2}} \big) \right)
\end{array} \right] \;.
\nonumber
\end{equation}
Then
\begin{equation}
\hat{w}^{\cS[k]}_0(\ee) = \left( I - \hat{\Gamma}(\ee) \hat{\Omega}^k(\ee) \right)^{-1}
\left( \hat{\Gamma}(\ee) \sum_{m=0}^{k-1} \hat{\Omega}^m(\ee) \hat{F}(\ee) + \hat{G}(\ee) \right) \;.
\label{eq:hatwSk0}
\end{equation}
Into (\ref{eq:hatwSk0}) we substitute (\ref{eq:Omegahatk}) and (\ref{eq:Gammahat}) to obtain,
\begin{equation}
\hat{w}^{\cS[k]}_0(\ee) = \left[ \begin{array}{l}
-\rho_1 \Big\{ \left( 1 + r^k(\ee) \cos(k\theta(\ee)) \right)
\sum_{m=0}^{k-1} r^m(\ee) \cos(m\theta(\ee)) \\
\qquad+\,r^k(\ee) \sin(k\theta(\ee)) \sum_{m=0}^{k-1} r^m(\ee) \sin(m\theta(\ee)) \Big\} \\
\qquad+\,\frac{\omega_{12} \rho_2}{\sqrt{\alpha-\beta}} \Big\{
r^k(\ee) \sin(k\theta(\ee)) \sum_{m=0}^{k-1} r^m(\ee) \cos(m\theta(\ee)) \\
\qquad-\,\left( 1 + r^k(\ee) \cos(k\theta(\ee)) \right) \sum_{m=0}^{k-1} r^m(\ee) \sin(m\theta(\ee)) \\
\qquad+\,\gamma_{12} q(\ee) \sum_{m=0}^{k-1} r^m(\ee) \cos(m\theta(\ee)) \Big\} \ee^{-\frac{1}{2}} \\
\qquad-\,\frac{\omega_{12} \sigma_2}{\sqrt{\alpha-\beta}}
\,r^k(\ee) \sin(k\theta(\ee)) \ee^{-\frac{1}{2}} + {\rm H.O.T.} \\
- \frac{\omega_{12} \rho_2}{\sqrt{\alpha-\beta}}
\sum_{m=0}^{k-1} r^m(\ee) \cos(m\theta(\ee)) \ee^{-\frac{1}{2}} + {\rm H.O.T.}
\end{array} \right]
\label{eq:hatwSk02}
\end{equation}
where, for brevity, we have omitted terms that do not provide a leading order contribution.
To simplify (\ref{eq:hatwSk02}) we use the identities\removableFootnote{
\begin{align}
\sum_{m=0}^{k-1} r^m \cos(m\theta) + {\rm i} r^m \sin(m\theta)
&= \sum_{m=0}^{k-1} z^m \quad {\rm ~where~} z = r {\rm e}^{{\rm i} \theta} \nonumber \\
&= \frac{1-z^k}{1-z} \nonumber \\
&= \frac{1 - r^k \cos(k\theta) - {\rm i} r^k \sin(k\theta)}
{1 - r \cos(\theta) - {\rm i} r \sin(\theta)} \nonumber \\
&= \frac{\big( 1-r\cos(\theta) \big) \left( 1 - r^k \cos(k\theta) \right)
+ r^{k+1} \sin(\theta) \sin(k\theta)}{1 - 2 r \cos(\theta) + r^2} \nonumber \\
&\quad+ {\rm i} \frac{r \sin(\theta) \left( 1 - r^k \cos(k\theta) \right)
- \big( 1 - r\cos(\theta) \big) r^k \sin(k\theta)}{1 - 2 r \cos(\theta) + r^2} \;.
\end{align}
}
\begin{align}
\sum_{m=0}^{k-1} r^m \cos(m\theta) &=
\frac{\big( 1-r \cos(\theta) \big) \left( 1 - r^k \cos(k\theta) \right)
+ r^{k+1} \sin(\theta) \sin(k\theta)}
{1 - 2 r \cos(\theta) + r^2(\ee)} \;, \nonumber \\ 
\sum_{m=0}^{k-1} r^m \sin(m\theta) &=
\frac{r \sin(\theta) \left( 1 - r^k \cos(k\theta) \right)
- \big( 1 - r \cos(\theta) \big) r^k \sin(k\theta)}
{1 - 2 r \cos(\theta) + r^2(\ee)} \;, \nonumber 
\end{align}
which may be derived by evaluating $\sum_{m=0}^{k-1} \left( r {\rm e}^{{\rm i} \theta} \right)^m$
with the classical formula for a truncated geometric series.
Using the formulas for $r(\ee)$ and $\theta(\ee)$ (\ref{eq:rtheta}),
we expand to obtain the following expressions
for the various pieces of (\ref{eq:hatwSk02})\removableFootnote{
Also,
\begin{align}
r^k(\ee) \cos(k\theta(\ee)) &= 1 - \frac{\alpha-\beta}{2} k^2 \ee  + \frac{\alpha}{2} k \ee
+ \cO \left( k^4 \ee^2 \right) + \cO \left( k^2 \ee^{\frac{3}{2}} \right) \;, \label{eq:cExp} \\
\sum_{m=0}^{k-1} r^m(\ee) \sin(m\theta(\ee)) &= 
\frac{\sqrt{\alpha-\beta}}{2} k(k-1) \ee^{\frac{1}{2}}
+ \cO \left( k^4 \ee^{\frac{3}{2}} \right) + \cO \left( k^2 \ee \right) \;. \label{eq:SExp}
\end{align}
}\removableFootnote{
Naively substituting (\ref{eq:SExp}) and the second equation of (\ref{eq:allExp})
gives the third and fourth equations of (\ref{eq:allExp}),
but with larger error terms that we cannot deal with.
It is necessary to use the additional formulas
\begin{align}
\left( 1 + r^k \cos(k\theta) \right) \sum_{m=0}^{k-1} r^m \cos(m\theta) 
+ r^k \sin(k\theta) \sum_{m=0}^{k-1} r^m \sin(m\theta) &=
\frac{(1-r\cos(\theta)) \left(1-r^{2k}\right) + 2 r^{k+1} \sin(\theta) \sin(k\theta)}
{1-2 r\cos(\theta)+r^2} \;, \\
r^k \sin(k\theta) \sum_{m=0}^{k-1} r^m \cos(m\theta)
- \left( 1 + r^k \cos(k\theta) \right) \sum_{m=0}^{k-1} r^m \sin(m\theta) &=
\frac{2 (1-r\cos(\theta)) r^k \sin(k\theta) - r\sin(\theta) \left(1-r^{2k}\right)}
{1-2 r\cos(\theta)+r^2} \;.
\end{align}
}
\begin{equation}
\begin{split}
r^k(\ee) \sin(k\theta(\ee)) &= \sqrt{\alpha-\beta} k \ee^{\frac{1}{2}}
+ \cO \big( k^3 \ee^{\frac{3}{2}} \big) + \cO \left( k \ee \right) \;, \\ 
\sum_{m=0}^{k-1} r^m(\ee) \cos(m\theta(\ee)) &= k
+ \cO \left( k^3 \ee \right) + \cO \big( k \ee^{\frac{1}{2}} \big) \;, \\ 
\left( 1 + r^k(\ee) \cos(k\theta(\ee)) \right)
\sum_{m=0}^{k-1} r^m(\ee) \cos(m\theta(\ee)) \\
+\,r^k(\ee) \sin(k\theta(\ee)) \sum_{m=0}^{k-1} r^m(\ee) \sin(m\theta(\ee)) &= 2 k
+ \cO \left( k^3 \ee \right) + \cO \big( k \ee^{\frac{1}{2}} \big) \;, \\ 
r^k(\ee) \sin(k\theta(\ee)) \sum_{m=0}^{k-1} r^m(\ee) \cos(m\theta(\ee)) \\
-\,\left( 1 + r^k(\ee) \cos(k\theta(\ee)) \right) \sum_{m=0}^{k-1} r^m(\ee) \sin(m\theta(\ee)) &=
\sqrt{\alpha-\beta} k \ee^{\frac{1}{2}}
+ \cO \big( k^3 \ee^{\frac{3}{2}} \big) + \cO \left( k \ee \right) \;. 
\end{split}
\label{eq:allExp}
\end{equation}
Into (\ref{eq:hatwSk02}) we substitute (\ref{eq:pq}), (\ref{eq:allExp}) and
$\det \left( I - M_{\cS[k]}(\ee) \right) = 4 + \cO \left( k^2 \ee \right)$ to arrive at
\begin{equation}
\hat{w}^{\cS[k]}_0(\ee) = \left[ \begin{array}{c}
k \left( -\frac{\rho_1}{2} + \frac{ \left( \gamma_{12} + \omega_{12} \right) \rho_2}{4}
- \frac{\omega_{12} \sigma_2}{4} + \cO \left( k^2 \ee \right) + \cO \big( \ee^{\frac{1}{2}} \big)
+ \cO \left( k^{-1} \right) \right) \\
k \ee^{-\frac{1}{2}} \left( -\frac{\omega_{12} \rho_2}{2 \sqrt{\alpha-\beta}}
+ \cO \left( k^2 \ee \right) + \cO \big( \ee^{\frac{1}{2}} \big)
+ \cO \left( k^{-1} \right) \right)
\end{array} \right] \;.
\label{eq:hatwSk03}
\end{equation}
To (\ref{eq:hatwSk03})
we apply the inverse of $\hat{g}^{\cY}$ and then the inverse of $\hat{g}^{\cX}$ to obtain
\begin{equation}
\hat{w}^{\cS[k]}_{(k-1) n_{\cX}}(\ee) =
\left[ \begin{array}{c}
k \left( \frac{\rho_1}{2} + \frac{\left( \gamma_{12} - 3 \omega_{12} \right) \rho_2}{4}
+ \frac{\omega_{12} \sigma_2}{4}
+ \cO \left( k^2 \ee \right) + \cO \big( \ee^{\frac{1}{2}} \big)
+ \cO \left( k^{-1} \right) \right) \\
k \ee^{-\frac{1}{2}} \left( \frac{\omega_{12} \rho_2}{2 \sqrt{\alpha-\beta}}
+ \cO \left( k^2 \ee \right) + \cO \big( \ee^{\frac{1}{2}} \big)
+ \cO \left( k^{-1} \right) \right)
\end{array} \right] \;.
\label{eq:hatwSk1nX}
\end{equation}
Finally, multiplying the right-hand sides of (\ref{eq:hatwSk03}) and (\ref{eq:hatwSk1nX})
by $Q^{-1} \hat{Q}(\ee)$ (to change from $(\hat{u},\hat{v})$-coordinates to
$(u,v)$-coordinates) we obtain (\ref{eq:uv0}) and (\ref{eq:uvk1nX}) as required.

\section{Calculations performed in $m$-dependent coordinates}
\label{app:mDependent}
\setcounter{equation}{0}

\subsection{A calculation of the $\cX$-cycle}
\label{sub:Xcycle}

For small $\ee > 0$, $w^{\cX}_m(\ee)$ is the unique fixed point of $g^{\cX^{(m)}}$,
where $\cX^{(m)}$ denotes the $m^{\rm th}$ left shift permutation of $\cX$.

When $\ee = 0$, $g^{\cX}$ is given by (\ref{eq:gX}) and
$\left( g^{\cX_{m-1}} \circ \cdots \circ g^{\cX_0} \right)(w)$ is given by (\ref{eq:gXtruncated}).
By combining these appropriately we obtain
\begin{align}
g^{\cX^{(m)}}(w) &= \left[ \begin{array}{cc}
1 - \frac{\psi_{11m} \psi_{21m} \omega_{12}}{\det(\Psi_m)} & \frac{\psi_{11m}^2 \omega_{12}}{\det(\Psi_m)} \\
-\frac{\psi_{21m}^2 \omega_{12}}{\det(\Psi_m)} & 1 + \frac{\psi_{11m} \psi_{21m} \omega_{12}}{\det(\Psi_m)}
\end{array} \right] w \nonumber \\
&~+\left[ \begin{array}{c}
\psi_{11m} \rho_1 + \psi_{12m} \rho_2 + \frac{\psi_{11m} \psi_{21m} \omega_{12} \chi_{1m}}{\det(\Psi_m)}
- \frac{\psi_{11m}^2 \omega_{12} \chi_{2m}}{\det(\Psi_m)} \\
\psi_{21m} \rho_1 + \psi_{22m} \rho_2 + \frac{\psi_{21m}^2 \omega_{12} \chi_{1m}}{\det(\Psi_m)}
- \frac{\psi_{11m} \psi_{21m} \omega_{12} \chi_{2m}}{\det(\Psi_m)}
\end{array} \right] \;,
\label{eq:gXm}
\end{align}
where $\Psi_m$ denotes the matrix part of (\ref{eq:gXtruncated}).
Note, $\det(\Psi_m) \ne 0$ because $M_{\cX}$ is nonsingular
and $\Psi_m$ is a truncation of $M_{\cX}$ in $(u,v)$-coordinates.

Since $[\psi_{11m},\psi_{21m}]$ is an eigenvector for the matrix part of (\ref{eq:gXm}),
it is useful to consider the $m$-dependent coordinate change\removableFootnote{
I could use $s$, $t$, $z$ and $h$ instead of $\tilde{u}$, $\tilde{v}$, $\tilde{w}$ and $\tilde{g}$?
}
\begin{equation}
\left[ \begin{array}{c} \tilde{u} \\ \tilde{v} \end{array} \right] = 
\left[ \begin{array}{cc} 1 & 0 \\ -\frac{\psi_{21m}}{\psi_{11m}} & 1 \end{array} \right]
\left[ \begin{array}{c} u \\ v \end{array} \right] \;.
\label{eq:uvTilde}
\end{equation}
We also let $\tilde{w} = (\tilde{u},\tilde{v})$
and $\tilde{g}^{\cS}$ denote $g^{\cS}$ in $(\tilde{u},\tilde{v})$-coordinates.
From (\ref{eq:gXm}) and (\ref{eq:uvTilde})
\begin{equation}
\tilde{g}^{\cX^{(m)}}(\tilde{w}) =
\left[ \begin{array}{cc} 1 & \frac{\psi_{11m}^2 \omega_{12}}{\det(\Psi_m)} \\ 0 & 1 \end{array} \right] \tilde{w}
+ \left[ \begin{array}{c} \tilde{\rho}_{1m} \\ \frac{\det(\Psi_m) \rho_2}{\psi_{11m}} \end{array} \right] \;,
\label{eq:gTildeXm}
\end{equation}
where $\tilde{\rho}_{1m} = \psi_{11m} \rho_1 + \psi_{12m} \rho_2
+ \frac{\psi_{11m} \psi_{21m} \omega_{12} \chi_{1m}}{\det(\Psi_m)}
- \frac{\psi_{11m}^2 \omega_{12} \chi_{2m}}{\det(\Psi_m)}$.

For $\ee > 0$, the coefficients of (\ref{eq:gTildeXm}) vary smoothly with $\ee$
because $A_L(\ee)$ and $A_R(\ee)$ vary smoothly with $\ee$.
We need to investigate how the coefficients of the matrix part of $\tilde{g}^{\cX^{(m)}}$ vary with $\ee$,
but we are not concerned with the constant part of $\tilde{g}^{\cX^{(m)}}$, so we write
\begin{equation}
\tilde{g}^{\cX^{(m)}}(\tilde{w}) =
\left( \left[ \begin{array}{cc}
1 + \xi_{11m} \ee & \frac{\psi_{11m}^2 \omega_{12}}{\det(\Psi_m)} + \xi_{12m} \ee \\
\xi_{21m} \ee & 1 + \xi_{22m} \ee \end{array} \right]
+ \cO \left( \ee^2 \right) \right) \tilde{w}
+ \left[ \begin{array}{c} \tilde{\rho}_{1m} \\ \frac{\det(\Psi_m) \rho_2}{\psi_{11m}} \end{array} \right] + \cO(\ee) \;,
\label{eq:gTildeXm2}
\end{equation}
for some $m$-dependent coefficients $\xi_{ijm}$.

The spectrum of $M_{\cX^{(m)}}$ is independent of $m$
(because changing $m$ changes only the cyclical order in which $A_L$ and $A_R$ are multiplied),
and the matrix part of (\ref{eq:gTildeXm2}) is similar to $M_{\cX^{(m)}}$,
therefore the spectrum of the matrix part of (\ref{eq:gTildeXm2}) is the same as that of $M_{\cX}(\ee)$.
Hence,
\begin{align}
\det \left( M_{\cX}(\ee) \right) &= 
1 + \left( \xi_{11m} + \xi_{22m}
- \frac{\psi_{11m}^2 \omega_{12} \xi_{21m}}{\det(\Psi_m)} \right) \ee + \cO \left( \ee^2 \right) \;, \\
{\rm trace} \left( M_{\cX}(\ee) \right) &= 
2 + \left( \xi_{11m} + \xi_{22m} \right) \ee + \cO \left( \ee^2 \right) \;.
\end{align}
Thus by (\ref{eq:detMX}) and (\ref{eq:traceMX}),
\begin{equation}
\alpha = \xi_{11m} + \xi_{22m} - \frac{\psi_{11m}^2 \omega_{12} \xi_{21m}}{\det(\Psi_m)} \;, \qquad
\beta = \xi_{11m} + \xi_{22m} \;.
\label{eq:alphabeta}
\end{equation}
If $\alpha \ne \beta$, then for small $\ee \ne 0$,
(\ref{eq:gTildeXm2}) has the unique fixed point
\begin{equation}
\tilde{w}^{\cX}_m(\ee) = \frac{1}{\ee} \left( \frac{1}{\alpha - \beta} \left[ \begin{array}{c}
\psi_{11m} \omega_{12} \rho_2 \\ 0 \end{array} \right] + \cO(\ee) \right) \;,
\nonumber
\end{equation}
where we have used (\ref{eq:alphabeta}) to substitute
$\alpha - \beta = -\frac{\psi_{11m}^2 \omega_{12} \xi_{21m}}{\det(\Psi_m)}$.
Since $\tilde{u} = u$, we have
$u^{\cX}_m(\ee) = \frac{1}{\ee} \left( \frac{\psi_{11m}
\omega_{12} \rho_2}{\alpha - \beta} + \cO(\ee) \right)$,
which is (\ref{eq:uXm}).

\subsection{Additional calculations for the proof of Lemma \ref{le:lowerBound}}
\label{sub:lowerBound}

From (\ref{eq:gTildeXm}) and (\ref{eq:alphabeta}) we obtain
\begin{align}
\tilde{u}^{\cS[k]}_{(j+2) n_{\cX} + m}
- 2 \tilde{u}^{\cS[k]}_{(j+1) n_{\cX} + m}
+ \tilde{u}^{\cS[k]}_{j n_{\cX} + m} &=
\left( -(\alpha-\beta) \ee + \cO \left( \ee^2 \right) \right) \tilde{u}^{\cS[k]}_{j n_{\cX} + m} \nonumber \\
&\quad+ \left( \frac{\psi_{11m}^2 \omega_{12} \beta}{\det(\Psi_m)} \ee + \cO \left( \ee^2 \right) \right) \tilde{v}^{\cS[k]}_{j n_{\cX} + m}
+ \left( \psi_{11m} \omega_{12} \rho_2 + \cO(\ee) \right) \;,
\label{eq:concavityu}
\end{align}
for $j = 0,\ldots,k-3$,
where the right-hand side of (\ref{eq:concavityu}) is an affine function of
$u^{\cS[k]}_{j n_{\cX} + m}$ and $v^{\cS[k]}_{j n_{\cX} + m}$
and the coefficients of the linear terms are given to a different order in $\ee$ than the constant term.
Similarly we have
\begin{equation}
\tilde{v}^{\cS[k]}_{(j+1) n_{\cX} + m} - \tilde{v}^{\cS[k]}_{j n_{\cX} + m} = 
\left( \xi_{21m} \ee + \cO \left( \ee^2 \right) \right) \tilde{u}^{\cS[k]}_{j n_{\cX} + m}
+ \left( \xi_{22m} \ee + \cO \left( \ee^2 \right) \right) \tilde{v}^{\cS[k]}_{j n_{\cX} + m}
+ \frac{\det(\Psi_m) \rho_2}{\psi_{11m}} + \cO(\ee) \;.
\label{eq:monotonicityv}
\end{equation}
Without loss of generality, and for simplicity, suppose $\cX_m = \sR$.
Then $\psi_{11m} \omega_{12} \rho_2 < 0$, by (\ref{eq:signkappa11omega12rho2}).
Since $\tilde{w}^{\cS[k]}_m(\ee)$ and $\tilde{w}^{\cS[k]}_{(k-1) n_{\cX} + m}(\ee)$
are admissible for all $0 \le \ee \le \Delta k^{-2}$,
and $\tilde{v}^{\cS[k]}_m(\ee)$ and $\tilde{v}^{\cS[k]}_{(k-1) n_{\cX} + m}(\ee)$
are affine functions of $k$,
there exists $C_m \in \mathbb{R}$ such that 
for all $0 \le \ee \le \Delta k^{-2}$,
$\tilde{w}^{\cS[k]}_m(\ee),\tilde{w}^{\cS[k]}_{(k-1) n_{\cX} + m}(\ee) \in \Sigma_m$,
where
\begin{equation}
\Sigma_m = \left\{ (\tilde{u},\tilde{v}) ~\big|~
\tilde{u} > 0 ,\, |\tilde{v}| < C_m k \right\} \;.
\nonumber
\end{equation}
Within $\Sigma_m$,
the $\tilde{u}^{\cS[k]}_{j n_{\cX} + m}$-term of (\ref{eq:concavityu}) is negative
for sufficiently small values of $\ee > 0$,
because $\alpha > \beta$,
The $\tilde{v}^{\cS[k]}_{j n_{\cX} + m}$-term of (\ref{eq:monotonicityv}) is $\cO \left( k \ee \right)$,
and thus dominated by the constant term, which is negative for small $\ee > 0$.
Hence, for sufficiently small $\ee > 0$,
in $\Sigma_m$ the values $\tilde{u}^{\cS[k]}_{j n_{\cX} + m}$ are concave down.
Similarly, in $\Sigma_m$
the values $\tilde{v}^{\cS[k]}_{j n_{\cX} + m}$ are increasing if $\rho_2 > 0$
and decreasing if $\rho_2 < 0$.
To see this we recall, $\alpha - \beta = -\frac{\psi_{11m}^2 \omega_{12} \xi_{21m}}{\det(\Psi_m)}$ (\ref{eq:alphabeta}),
hence the $\tilde{u}^{\cS[k]}_{j n_{\cX} + m}$-term and constant terms of (\ref{eq:monotonicityv})
have the same sign in $\Sigma_m$.

Let $U_m(\ee) = \min_{j=0,\ldots,k-1} \left[ \tilde{u}^{\cS[k]}_{j n_{\cX} + m} \right]$.
Since $U_m$ is a continuous function of $\ee$,
$U_m(\ee)$ must be positive over the interval
$0 \le \ee \le \Delta k^{-2}$
(because otherwise there exists $0 \le \ee \le \Delta k^{-2}$
for which $U_m(\ee) = 0$ and $\tilde{w}_{j n_{\cX} + m} \in \Sigma_m$, for each $j$,
which is not possible in view of the concavity condition (\ref{eq:concavityu})).
Therefore the sign of $\tilde{u}^{\cS[k]}_{j n_{\cX} + m}(\ee)$ is constant, for all $j = 0,\ldots,k-1$,
as required.

\end{document}